\newcounter{results}[section] % Uniform counters for lemmas, theorems, propositions etc
\theoremstyle{plain}
\newtheorem{theorem}[results]{Theorem}
\newtheorem{lemma}[results]{Lemma}
\newtheorem{proposition}[results]{Proposition}
\newtheorem{corollary}[results]{Corollary}
\newtheorem*{theorem*}{Theorem}
\newtheorem*{lemma*}{Lemma}
\newtheorem*{proposition*}{Proposition}
\newtheorem*{corollary*}{Corollary}
\newtheorem*{exercise*}{Exercise}
\newtheorem*{fact*}{Fact}
\newtheorem*{theorem3}{Theorem 7.5 for $n+1=3$}
\theoremstyle{remark}
\newtheorem{remark}[results]{Remark}
\newtheorem*{remark*}{Remark}
\newtheorem*{question*}{Question}
\newtheorem*{case3}{Case $n+1=3$}
\theoremstyle{definition}
\newtheorem{definition}[results]{Definition}
\newtheorem*{definition*}{Definition}
\newtheorem*{example*}{Example}
\numberwithin{equation}{section}
\crefname{figure}{Figure}{Figures}
        \renewcommand{\comma}{\ensuremath{\, \text{, }}}
        \newcommand{\comma}{\ensuremath{\, \text{, }}}
\newcommand{\point}{\ensuremath{\, \text{. }}}
\newcommand{\N}{\ensuremath{\mathbb N}}%Natural numbers
\newcommand{\Z}{\ensuremath{\mathbb Z}}%Integers
\newcommand{\R}{\ensuremath{\mathbb R}}%Real numbers
\DeclarePairedDelimiter\abs{\lvert}{\rvert} % Absolute value
\newcommand{\st}{\ensuremath{\ :\ }} % Such that in formulae.
\newcommand{\eqdef}{\ensuremath{\coloneqq}} % Equal in a definition.
\renewcommand{\d}{\ensuremath{d}} % differential
\newcommand{\de}{\ensuremath{\, d}} % differential in integrals
\newcommand{\lapl}{\ensuremath{\Delta}} % Laplacian
\newcommand{\vf}{\ensuremath{\mathfrak X}} % Vector fields space
\newcommand{\cov}{\ensuremath{\nabla}} %Covariant derivative
\DeclareMathOperator{\II}{I\!I} % Second fundamental form
\DeclareMathOperator{\Ric}{Ric} % Ricci tensor
\newcommand{\dist}{\ensuremath{d}} % Distance
\newcommand{\Haus}{\ensuremath{\mathscr H}} % Hausdorff measure
\newcommand{\ms}{\ensuremath{\Sigma}} % Minimal submanifold
\newcommand{\amb}{\ensuremath{M}} % Ambient manifold
\DeclareMathOperator{\ind}{ind} % Index
\DeclareMathOperator{\radius}{radius}
\DeclareMathOperator{\Sing}{Sing}
\newcommand{\conical}{\mathfrak{C}}
\newcommand{\CA}{\ensuremath{\Lambda}}
\newcommand{\BO}{\ensuremath{\Upsilon}}
\newcommand{\BU}{\ensuremath{\Omega}}
\DeclareMathOperator{\conv}{conv}
\newcommand{\PJ}{\normalfont P[J]}
\colorlet{myGray}{gray}
\colorlet{myBlue}{blue}
\colorlet{myBlack}{black}
\colorlet{myBackground}{gray!10}
\title[Estimating the Morse index of FBMH through covering arguments]{Estimating the Morse index of \\ free boundary minimal hypersurfaces \\ through covering arguments}
\author{Santiago Cordero-Misteli and Giada Franz}
\newcommand\printaddress{{
\setlength{\parindent}{17pt}
\footnotesize

\bigskip
\par
{\scshape \noindent Santiago Cordero-Misteli}
\newline SUNY Stony Brook, Department of Mathematics, Stony Brook University, Stony Brook, NY 11794, USA
\newline
\textit{E-mail address:} \texttt{santiago.corderomisteli@stonybrook.edu}
\newline 
\par 
{\scshape \noindent Giada Franz}
\newline MIT, Department of Mathematics, 77 Massachusetts Avenue, Cambridge, MA 02139, USA
  \newline
\textit{E-mail address:} \texttt{gfranz@mit.edu}
\par
}}
\begin{document}
\maketitle

\begin{abstract}
Given a compact Riemannian manifold, of dimension between $3$ and $7$, with boundary, we adapt Song's method in \cite{Song2019} to the free boundary case to show that the Morse index of a free boundary minimal hypersurface grows linearly with the sum of its Betti numbers, where the constant of growth depends on an upper bound on the area of the free boundary minimal hypersurface in question.
\end{abstract}

\thispagestyle{empty}

%% ----- Introduction ------------------------
\section{Introduction}

The goal of this paper is to prove a quantitative bound on the sum of the Betti numbers of a free boundary minimal hypersurface in a compact Riemannian manifold $M^{n+1}$, of dimension $3\le n+1\le 7$, with boundary in terms of the area and the Morse index of the hypersurface.

The question whether, and how, the Morse index of a minimal hypersurface bounds its topology goes back already to the classical theory of complete minimal surfaces in $\R^3$. Indeed, Fischer-Colbrie proved in \cite{FischerColbrie1985} that a complete minimal surface in $\R^3$ with finite Morse index has finite total curvature, which implies restrictions on the topology and conformal structure of the minimal surface by \cite{Osserman1964}.
Nowadays, we know that the index of a minimal surface in $\R^3$ bounds from above a linear combination of its genus and the number of its ends by \cites{ChodoshMaximo2016,ChodoshMaximo2018}.

In the case of a boundaryless compact ambient manifold $(M^{n+1},g)$ with \emph{positive Ricci curvature}, it was conjectured by Marques--Neves--Schoen (see \cite{Neves2014}*{Section 8}, and also \cite{AmbrozioCarlottoSharp2018Index}*{pp.~3} and \cite{Song2019}*{eq.~(1)}) that there exists a constant $C>0$ (depending on the ambient manifold $(M,g)$) such that
\[
b_1(\ms)\le C\ind(\ms), 
\]
for every closed minimal surface $\ms^n\subset M$, where $b_1(\ms)$ is the first Betti number of $\ms$.
This conjecture is still open, but it has been proven under additional assumptions (e.g. pinching assumptions on the curvature) in \cite{AmbrozioCarlottoSharp2018Index} (see also \cites{Ros2006,Savo2010} for previous works, and \cite{GorodskiMendesRadeschi2019,MendesRadeschi2020} for further extensions).

In the case with boundary, the assumption corresponding to ``positive Ricci curvature'' is that the ambient manifold $(M^{n+1},g)$ has \emph{either} positive Ricci curvature and convex boundary \emph{or} nonnegative Ricci curvature and strictly convex boundary.
In this setting, one may formulate the same conjecture as in the closed case by Marques--Neves--Schoen, which has been proven so far only under additional assumptions in \cite{AmbrozioCarlottoSharp2018IndexFBMS} (see also \cite{Sargent2017}).

In the case of a boundaryless compact three-dimensional ambient manifold $(M^3,g)$ with \emph{positive scalar curvature}, only qualitative bounds on the topology in terms of the index are at disposal. Namely, Chodosh--Ketover--Maximo proved in \cite{ChodoshKetoverMaximo2017}*{Theorem 1.3} that the space of minimal surfaces $\ms^2\subset M$ with prescribed index have uniformly bounded genus (and area).
In the analogous setting with boundary, namely three-dimensional ambient manifolds with \emph{either} positive scalar curvature and mean convex boundary \emph{or} nonnegative scalar curvature and strictly mean convex boundary, the same result was proven in  \cite{CarlottoFranz2020}*{Theorem 1.4}.

Observe that, in absence of any curvature assumption, we cannot hope for a bound on the topology of a minimal surface {solely} in terms of its index (see \cite{ChodoshKetoverMaximo2017}*{Example 1.16}). However, given a boundaryless compact ambient manifold $(M^{n+1},g)$ of dimension $3\le n+1\le 7$, Sharp proved in \cite{Sharp2017} that a bound on the index and the area of a minimal hypersurface $\ms^n\subset M$ implies a bound on its genus (see \cite{AmbrozioCarlottoSharp2018Compactness} for the case with boundary). 
This result was improved by Song in \cite{Song2019}*{Theorem~1} to a quantitative bound, whose free boundary counterpart is the object of this paper. 
Here is our main result. 
\begin{theorem} \label{thm:main} 
    Let $(M^{n+1},g)$ be a compact Riemannian manifold of dimension $3\le n+1\le 7$ with (possibly empty) boundary. Then, given $\CA>0$, there exists a constant $C_\CA>0$ depending only on $(M,g)$ and $\CA$ such that the following property holds.
    
    For every smooth, compact, properly embedded, free boundary minimal hypersurface $\ms^n\subset M$ with $\Haus^n(\ms)\le \CA$, we have
    \begin{equation*} \label{eq:bound-lower-dimensions}
        \sum_{k=0}^n b_k(\ms)
		+\sum_{k=0}^{n-1} b_k(\partial \ms)
			\leq C_\CA(1+\ind(\ms)).
    \end{equation*}
    Moreover, for $n+1=3$, the constant $C_\CA$ can be chosen as $C_\CA = C \CA$ for some constant $C>0$ depending only on $(M,g)$.
\end{theorem}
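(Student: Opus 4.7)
The plan is to adapt Song's covering argument in \cite{Song2019} to the free boundary setting. The overall strategy is to decompose the hypersurface $\ms$ into finitely many pieces of two types: pieces on which $\ms$ is stable (as a free boundary minimal hypersurface) and therefore topologically tame, and pieces which each contribute at least one unit to $\ind(\ms)$. A counting argument then bounds the sum of Betti numbers by a constant multiple of $1+\ind(\ms)$.

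First I would fix a scale $r_0>0$, depending only on $(M,g)$, at which two key properties hold uniformly on geodesic balls $B_{r_0}(p)$ with $p\in M$ (using half-balls when $p\in\partial M$). On the one hand, free boundary curvature estimates for stable free boundary minimal hypersurfaces (as in the work of Guang--Li--Zhou and the references in \cite{AmbrozioCarlottoSharp2018IndexFBMS}) should yield that, whenever $\ms\cap B_{r_0}(p)$ is stable with respect to compactly supported variations preserving the free boundary condition, the topology of $\ms\cap B_{r_0/2}(p)$ is controlled by a universal constant depending on $\CA$. On the other hand, the scale $r_0$ must be chosen small enough that one can build cutoff test functions supported in $B_{r_0}(p)$ witnessing instability: if $\ms\cap B_{r_0}(p)$ is not stable, there is a compactly supported function orthogonal to the null space that decreases the free boundary area functional.

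Next I would perform the Vitali-type covering argument. Given $\CA$, a monotonicity-style argument bounds the number of disjoint balls of radius $r_0/5$ centered on $\ms$ by a constant $N_\CA$ depending only on $(M,g)$ and $\CA$. Using a Besicovitch-type selection, I would produce a covering $\{B_{r_0}(p_j)\}_{j=1}^{N}$ of $\ms$ with $N\le N_\CA$ and bounded overlap, respecting the distinction between interior balls and half-balls at $\partial M$. Declare the ball $B_{r_0}(p_j)$ \emph{unstable} if $\ms\cap B_{r_0}(p_j)$ admits a compactly supported variation lowering its area; otherwise call it \emph{stable}. The crucial point is that the test functions produced from distinct unstable balls (after restricting to the balls $B_{r_0/5}(p_j)$, which are disjoint) are linearly independent and all lie in the negative spectrum of the Jacobi operator, so the number of unstable balls is at most $\ind(\ms)$ plus a constant accounting for nullity and the dimension of the null space of the Jacobi operator restricted to the covering.

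Then I would combine the local topological bounds on stable balls with a Mayer--Vietoris argument for the nerve of the covering to control $\sum_k b_k(\ms)+\sum_k b_k(\partial\ms)$. Each stable ball contributes a bounded amount to the Betti numbers (by the curvature estimates above), each unstable ball contributes a similarly bounded but possibly larger amount (this time simply because the cardinality of unstable balls is bounded in terms of $\ind(\ms)$), and the combinatorics of the nerve contributes a term controlled by $N_\CA$. For the refined statement in dimension $n+1=3$, one exploits that the topology of a stable free boundary minimal surface inside a small ball is universally bounded independently of area and that $N_\CA$ itself is linear in $\CA$ by monotonicity, which gives the linear-in-area form of the constant.

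The hardest step will be the local stability-to-topology estimate on half-balls along $\partial M$: one must ensure the free boundary curvature estimates hold in a form uniform in the geometry of $\partial M$ and robust enough to control not only the interior Betti numbers $b_k(\ms)$ but also the boundary Betti numbers $b_k(\partial\ms)$. Equally delicate is constructing test functions that are compactly supported and compatible with the free boundary condition, so that the instability in each ``unstable'' ball actually produces an independent direction in the negative eigenspace of the free boundary Jacobi operator without interference from the free boundary term in the second variation.
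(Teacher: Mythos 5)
Your proposal misses the essential difficulty of the problem, which is why Song's argument (and its adaptation here) has to be considerably more elaborate than a single-scale covering argument.

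The core gap is in the step where you declare a ball $B_{r_0}(p_j)$ at \emph{fixed} scale $r_0$ to be ``unstable'' and then claim to control the topology via the count of unstable balls. First, instability of $\ms$ in $B_{r_0}(p_j)$ does not imply instability in $B_{r_0/5}(p_j)$, so there is no way to ``restrict'' the test function to the disjoint inner balls; the pairwise-disjoint unstable regions witnessing lower bounds for the index have to be constructed by a separate argument. More importantly, even if you could arrange $k$ disjoint unstable balls and conclude $k\le\ind(\ms)$, this bounds only the \emph{number} of unstable balls, not the topology of $\ms$ \emph{inside} each one. A single unstable ball at a fixed ambient scale can hide arbitrarily large Betti numbers with bounded area and bounded index (think of handles or necks degenerating at scale $\eps\to 0$ around one point), and your proposal offers no mechanism to bound that contribution. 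This is precisely why the paper replaces the fixed scale $r_0$ with the variable stability radius $s_\ms(p)$ (\cref{def:stability-radius}): at the scale $s_\ms(p)$ the hypersurface is stable in $B(p,\lambda s_\ms(p))$, and curvature estimates give graphicality, so every stability ball has trivial topology (\cref{prop:write-as-graph}, \cref{cor:acyclic-cover}); there simply are no ``unstable balls'' in the cover.

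Once one works at the variable scale a second, subtler, obstruction appears which your proposal also does not address: near a concentrating neck the $\lambda$-dilations of all nearby stability balls contain the neck point (\cref{fig:catenoid}), so one cannot extract a large disjoint subfamily of unstable regions directly. This forces the construction of the almost conical region $\conical$ (\cref{sec:conical-region}) --- a union of telescopes where $\ms$ is close to a minimal or free boundary minimal cone, hence with topology controlled for geometric reasons --- which is removed before the layered covering / dichotomy argument (property \PJ{}, \cref{prop:Property-PJ}, \cref{thm:DisjointIndex}) is run in the complement. Without these two ingredients (variable scale and conical region) the linear-in-index bound is out of reach, and the Mayer--Vietoris step at the end cannot be salvaged.
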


\begin{remark}
We state the theorem with the assumption that $\ms$ is properly embedded, namely $\partial\ms=\ms\cap\partial\amb$, since there are some subtleties in defining the Morse index for nonproperly embedded free boundary minimal hypersurfaces (see \cite{CarlottoFranz2020}*{Section~2.2 and Appendix~A}). However, the same result holds for \emph{almost properly embedded} free boundary minimal hypersurfaces, arising via a min-max procedure in an arbitrary ambient manifold (see \cite{LiZhou2021}, in particular Definition~2.6 therein), with the notion of index used in the min-max theory, as e.g. in \cite{GuangLiWangZhou2021}.

Indeed, the only point in which we use effectively the definition of index is in the curvature estimates \cref{thm:curvature-estimates}, which holds also for almost properly embedded free boundary minimal hypersurface by \cite{GuangWangZhou2021}*{Theorem~1.4}.
\end{remark}

\begin{remark}
In the case $\partial \amb = \emptyset$, by properly embedded free boundary minimal hypersurfaces we mean embedded minimal hypersurfaces with $\partial \ms = \emptyset$. Under this convention, in the close case we just recover \cite{Song2019}*{Theorem~1}.
\end{remark}

\begin{remark} \label{rmk:InteriorBoundsBoundary}
In the case of orientable hypersurfaces we can use Poincaré--Lefschetz duality \cite{Hatcher2002}*{Theorem~3.43} and the exactness of the long exact sequence in cohomology associated to the pair $(\ms,\partial\ms)$ (cf. \cite{Hatcher2002}*{pp.~199--200}, see also Theorem~2.16 therein) to obtain that
\[
    \sum_{k=0}^{n-1} b_k(\partial\ms)
        \leq 2\sum_{k=0}^n b_k(\ms).
\]
We decided to state the theorem this way so that it also applies to nonorientable hypersurfaces and to highlight the fact that the Betti numbers of the boundary are also controlled by the index (and the area). However, we emphasize that $\sum_{k=0}^n b_k(\ms)$ contains already information on the topology of the boundary $\partial \ms$.
This is a consequence of \cref{cor:acyclic-cover} and \cref{prop:write-as-graph}\ref{wag:ii} in our proof.
\end{remark}

{
\begin{remark}
Note that, in the case when $M^{n+1}$ is a strictly two-convex subset of $\R^{n+1}$, Ambrozio--Carlotto--Sharp prove in \cite{AmbrozioCarlottoSharp2018IndexFBMS}*{Theorem~F} that
\[
b_1(\ms) + b_{n-1}(\ms) \le n(n+1)\ind(\ms).
\]
Indeed, we have that $b_1(\ms) = H_{n-1}(\ms,\partial\ms)$ and $b_{n-1}(\ms) = H_1(\ms,\partial\ms)$ (see \cite{AmbrozioCarlottoSharp2018IndexFBMS}*{Section~3}). 
Note that in our case we get an estimate on all the Betti numbers, on the other hand our constant $C_\CA=C_\CA(M,g,\CA)$ is not explicit and depends also on the area bound $\CA$.
\end{remark}
}

\subsection{Idea of the proof}
A recurrent method for obtaining quantitative estimates relating index and topology of a minimal hypersurface is {to rely} on the use of harmonic one-forms to obtain negative directions for the second derivative of the area.
More precisely, given a minimal hypersurface $\ms^n\subset M^{n+1}$, by basic Hodge theory, the dimension of the space of harmonic one-forms is equal to the dimension of the first \emph{de Rham cohomology} group of $\ms$, which is the first Betti number of $\ms$. Then, in order to relate the index to the dimension of the space of harmonic one-forms, one uses that harmonic one-forms give (in some suitable sense) negative directions for the second derivative of the area (see e.g. \cite{AmbrozioCarlottoSharp2018Index}*{Propositions 1 and 2}). We refer to \cites{ChodoshMaximo2016,ChodoshMaximo2018,AmbrozioCarlottoSharp2018Index} for examples of applications of this principle, whose idea goes back at least to Ros in \cite{Ros2006}*{Theorems 15--18}.
The drawback of this method is that, to turn the aforementioned relation between the first Betti number and the index of a minimal hypersurface into an effective bound, strong curvature assumptions on the ambient manifold $(M^{n+1},g)$ are imposed.

In order to deal with any closed ambient manifold $(M^{n+1},g)$ with $3\le n+1\le 7$, Song in \cite{Song2019} introduced a new approach, consisting of a geometric covering argument (see \cite{GrigoryanNetrusovYau2004} for other instances of covering arguments involving the Morse index).
The idea is to find a nice enough cover, whose size and overlap will bound the Betti numbers of a closed minimal hypersuface $\ms^n\subset\amb$.
    The way to choose this cover is to work at a scale of (appropriately quantified) stability (see \eqref{eq:defstabpre} below).
    Then one proves that it is possible to find sufficiently many unstable disjoint subsets of $\ms$, where sufficiently many means a definite fraction of the total number of elements in the cover. This is how the Morse index enters the picture.
    Song's approach to estimate Betti numbers could thus be seen as based on \emph{\v{C}ech cohomology}, as opposed to the previously described method which is based on the de Rham cohomology and Hodge theory.
    One of the main advantages is that this approach makes no assumption on the ambient geometry, but in turn it introduces a dependence on the area regardless of curvature assumptions.
     
    As anticipated, in this paper we take this same approach and we generalize it to the free boundary setting.
    Let us describe the structure of the proof of \cref{thm:main} in some more detail. 

First, we consider the cover $\mathcal{U}$ of $M$ consisting of \emph{stability balls} $B(p,s_\ms(p))$ for $p\in M$. Here, the \emph{stability radius} $s_\ms(p)$ depends on two fixed parameters $\bar r>0$ and $\lambda>0$, and it is defined as
\begin{equation} \label{eq:defstabpre}
s_\ms(p)\eqdef \sup\{r\le \bar r\st \text{$\ms$ is stable in $B(p,\lambda r)$}\},
\end{equation}
(see \cref{def:stability-radius}). The constants $\bar r,\lambda$ are chosen sufficiently small and sufficiently large respectively, in such a way that
\begin{enumerate} [label={\normalfont(\roman*)}]
    \item\label{pwc} $\ms\cap B(p,s_\ms(p))$ is well-controlled thanks to the curvatures estimates for stable (free boundary) minimal hypersurfaces;
    \item\label{pus} $\ms\cap B(p,\mu s_\ms(p))$ is unstable for all $\mu>\lambda$, if $s_\ms(p)<\bar r$.
\end{enumerate}

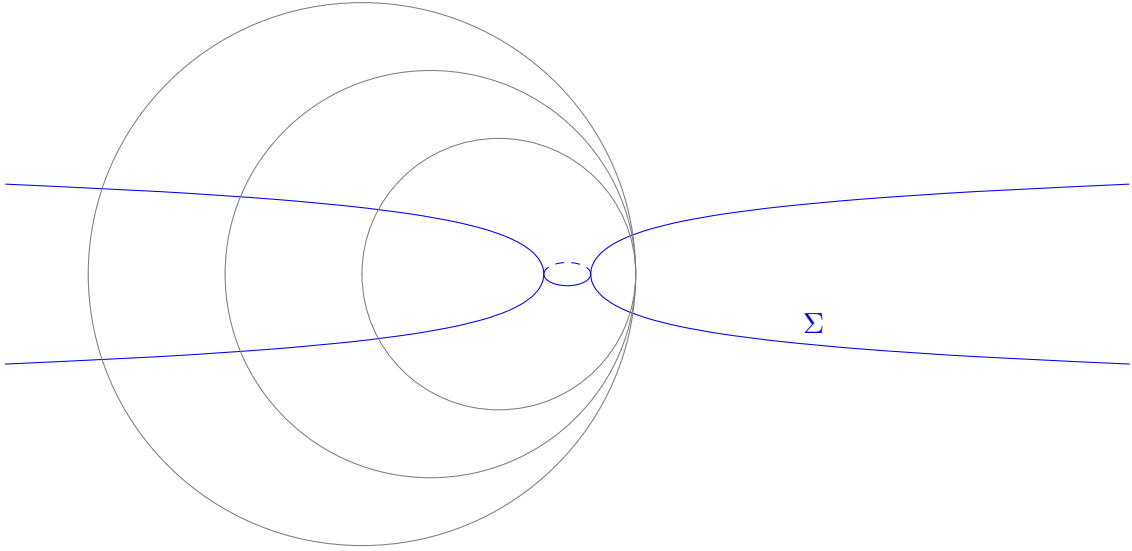
\begin{figure}[htpb]
\centering
%%%%%%%%%%%%%%%%%%%%%%%%%%%%%%%%%%%%%%%%%%%%%%%%%%%%%
%%% crit_cat.tikz
%%%%%%%%%%%%%%%%%%%%%%%%%%%%%%%%%%%%%%%%%%%%%%%%%%%%%
\begin{tikzpicture}[line cap=round,line join=round,scale=0.9]

\pgfmathsetmacro{\R}{2.4}
\pgfmathsetmacro{\T}{1.19968}
\pgfmathsetmacro{\a}{1/(\T*cosh(\T))}

\pgfmathsetmacro{\l}{0.31}
\begin{scope}[myBlue]
\node at(1.5*\R,-0.3*\R) {$\ms$};
\draw[scale=1,domain=-\T:\T,smooth,variable=\s]  plot ({\l*\R*\a*cosh(\s/\l)},{\R*\a*\s});
\draw[scale=1,domain=-\T:\T,smooth,variable=\s]  plot ({-\l*\R*\a*cosh(\s/\l)},{\R*\a*\s});
\coordinate(P)at({\l*\R*\a},{0});
\draw[dashed] (P) arc (0:180:{\l*\R*\a} and {0.5*\l*\R*\a});
\draw (P) arc (0:-180:{\l*\R*\a} and {0.5*\l*\R*\a});
\end{scope}

\draw[gray] (1,0) arc (0:180:4);
\draw[gray] (1,0) arc (0:-180:4);

\draw[gray] (1,0) arc (0:180:3);
\draw[gray] (1,0) arc (0:-180:3);

\draw[gray] (1,0) arc (0:180:2);
\draw[gray] (1,0) arc (0:-180:2);
\end{tikzpicture}
%%%%%%%%%%%%%%%%%%%%%%%%%%%%%%%%%%%%%%%%%%%%%%%%%%%%%
%%%%%%%%%%%%%%%%%%%%%%%%%%%%%%%%%%%%%%%%%%%%%%%%%%%%%
\caption{Heuristic picture of the stability balls around a catenoidal neck.} \label{fig:catenoid}
\end{figure}

Now, the idea is to find a subset $\mathcal{B}\subset\mathcal{U}$ whose size is greater than or equal to (up to a constant) the sum of the Betti numbers of $\Sigma$ (which can be proved thanks to \ref{pwc}), and such that $\mu B$ and $\mu B'$ are disjoint for all $B,B'\in\mathcal{B}$, for some $\mu>\lambda$.
By observation \ref{pus}, this implies that
\[
\sum_{k=0}^nb_k(\Sigma)\lesssim\abs{\mathcal{B}} \le \ind(\ms),
\]
which would conclude the proof.

In order to find such subset $\mathcal{B}$, we first have to remove a region, called \emph{almost conical region} and denoted by $\conical$, which consists of a finite union of topological annuli where (in a suitable chart) the components of $\ms$ are close to a cone with bounded curvature and such that the stability radius of the center of the annulus is small.
The picture that one should have in mind is a catenoidal neck with large curvature. Heuristically, the index is concentrated at the catenoidal neck, hence the $\lambda$-dilations of the stability balls all contain the center of the neck (see \cref{fig:catenoid}). Thus, it is hard in this case to extract a subcover of stability balls whose $\lambda$-dilations are disjoint. The point is then to first remove an annulus around the catenoidal neck, where we know for other reasons that the topology is controlled.

The general idea is similar. Heuristically (see also \cref{fig:heuristic}), the index concentrates around a finite number of points, and away from those points the curvature and the topology are controlled (cf. \cite{Sharp2017}*{Theorem~2.3}). Thus, we would like to deal with the two regions separately.

\begin{figure}[htpb]
\centering
%%%%%%%%%%%%%%%%%%%%%%%%%%%%%%%%%%%%%%%%%%%%%%%%%%%%%
%%% global_descr.tikz
%%%%%%%%%%%%%%%%%%%%%%%%%%%%%%%%%%%%%%%%%%%%%%%%%%%%%

\begin{tikzpicture}[scale=1]
\coordinate (A1) at (0,0); 
\coordinate (B1) at (0.2,0.05); 
\coordinate (C1) at (0.4,0.12);
\coordinate (D1) at (0.6,0.21);
\coordinate (E1) at (0.8,0.35);

\coordinate (F1) at (4,1.97);
\coordinate (G1) at (4.1,2.05);
\coordinate (H1) at (4.3,2.17);
\coordinate (I1) at (4.5,2.24);
\coordinate (J1) at (4.7,2.28);
\coordinate (K1) at (4.85,2.27);

\coordinate (F2) at (5,-4.13);
\coordinate (G2) at (5.1,-4.13);
\coordinate (H2) at (5.3,-4.12);
\coordinate (I2) at (5.5,-4.11);
\coordinate (J2) at (5.7,-4.09);
\coordinate (K2) at (5.8,-4.07);

\coordinate (C2) at (0.7,-3);
\coordinate (D2) at (0.9,-3.05);
\coordinate (E2) at (1.1,-3.1);

\coordinate (X) at (-1.5,-0.2);
\coordinate (Y) at (-1.2, -3.2);
\coordinate (U) at (7.5, 1.8);
\coordinate (V) at (7.9, -2.6);

\fill[gray!10!white] plot [smooth cycle, tension=0.6] coordinates {(A1) (B1) (C1) (D1) (E1)
			      (1.5,0.9) (2,1.2) (2.5,1.3) (3,1.7) (3.7,1.71)
			      (F1) (G1) (H1) (I1) (J1) (K1)
			      (5.5,2.2) (6,1.8)
			      (U) (8.5,-0.5) (V)
			      (7,-2.9) (6.5, -3.3) (6.2, -3.93) 
			      (K2) (J2) (I2) (H2) (G2) (F2)
			      (4.5,-4) (3.5,-3.2) (2.7, -2.5) (2,-2.7) (1.5, -3.2) 
			      (E2) (D2) (C2)
			      (0.3, -3)
			      (Y) (-1.9,-2) (X) (-0.5,-0.05)};

\pgfmathsetmacro\R{1}
\fill[white] (-0.4, -3.15) arc (180:-180:\R);
\fill[white] (5.685, -1.2) circle (\R);

\draw plot [smooth cycle, tension=0.6] coordinates {(A1) (B1) (C1) (D1) (E1)
			      (1.5,0.9) (2,1.2) (2.5,1.3) (3,1.7) (3.7,1.71)
			      (F1) (G1) (H1) (I1) (J1) (K1)
			      (5.5,2.2) (6,1.8)
			      (U) (8.5,-0.5) (V)
			      (7,-2.9) (6.5, -3.3) (6.2, -3.93) 
			      (K2) (J2) (I2) (H2) (G2) (F2)
			      (4.5,-4) (3.5,-3.2) (2.7, -2.5) (2,-2.7) (1.5, -3.2) 
			      (E2) (D2) (C2)
			      (0.3, -3)
			      (Y) (-1.9,-2) (X) (-0.5,-0.05)};

\begin{scope}[blue]
\draw (F1) to [bend left=20] (F2);
\draw (G1) to [bend left=20] (G2);

\draw plot [smooth, tension = 0.5] coordinates {(H1) (5.2,0.2) (5.45,-1) (5.57, -1.2) (5.65, -1) (5.4,0.2) (I1)};
\draw plot [smooth, tension = 0.5] coordinates {(H2) (5.5,-2.5) (5.5,-1.5) (5.6, -1.3) (5.7, -1.5) (5.7,-2.5) (I2)};
\pgfmathsetmacro{\t}{atan(0.03/0.1)}
\begin{scope}[rotate around={\t:(5.57,-1.2)}]
\draw[gray] (5.57, -1.2) arc (90:270:0.03 and {sqrt(0.03*0.03+0.1*0.1)/2});
\draw (5.57, -1.2) arc (90:-90:0.03 and {sqrt(0.03*0.03+0.1*0.1)/2});
\end{scope}

\draw plot [smooth, tension = 0.5] coordinates {(J1) (5.25,1.2) (5.68,0) (5.9,-1.29) (5.97, -1.45) (6, -1.27) (5.8,0) (5.37,1.2) (K1)};
\draw plot [smooth, tension = 0.5] coordinates {(J2)  (5.9,-2.7) (5.93, -1.7) (5.97, -1.53) (6.03,-1.7) (6.02, -2.7) (K2)};
\draw[gray] (5.97, -1.45) arc (90:270:0.02 and {0.04});
\draw (5.97, -1.45) arc (90:-90:0.02 and {0.04});

\draw[gray] (0.45, -2.9) arc (90:182:0.02 and {0.08});
\draw (0.45, -2.9) arc (90:0:0.02 and {0.08});
\draw plot [smooth, tension = 0.55] coordinates {(A1) (0.35,-1.2) (0.35,-2.65) (0.45, -2.9) (0.55, -2.65) (0.55,-1.2) (B1)};

\pgfmathsetmacro{\t}{atan(0.01/0.1)}
\begin{scope}[rotate around={-\t:(0.87, -2.6)}]
\draw[gray] (0.87,-2.6) arc (90:270:0.03 and {sqrt(0.01*0.01+0.1*0.1)/2});
\draw (0.87,-2.6) arc (90:-90:0.03 and {sqrt(0.01*0.01+0.1*0.1)/2});
\end{scope}
\draw plot [smooth, tension = 0.55] coordinates {(C1) (0.75,-1.2) (0.77,-2.4) (0.87, -2.6) (0.97, -2.4) (0.95,-1.2) (D1)};
\draw plot [smooth, tension = 0.9] coordinates {(C2) (0.75, -2.8) (0.86, -2.7) (0.93, -2.8) (D2)};
\draw (E1) to [bend left = 15] (E2);

\node at (1.5,-0.6) {$\ms$};
\end{scope}

\pgfmathsetmacro\r{0.5}
\draw[gray] (-0.4, -3.15) arc (180:2:\R);
\draw[gray] (5.685, -1.2) circle (\R);

\draw[gray,->] (1.1,-3.2) to [bend right=20] (7,-5);
\draw[gray,->] (6.5,-1.92) to [bend left=20] (9.5,-4.42);
\node[text width=4cm,below] at (9.5,-4.6) {Region where the index ``concentrates''};

\draw[gray,->] (7,1) to [bend left=20] (9.5,1);
\node[text width=3cm,right] at (9.8,0.5) {Region where the curvature (and so the topology) is controlled};
\end{tikzpicture}
%%%%%%%%%%%%%%%%%%%%%%%%%%%%%%%%%%%%%%%%%%%%%%%%%%%%%
%%%%%%%%%%%%%%%%%%%%%%%%%%%%%%%%%%%%%%%%%%%%%%%%%%%%%
\caption{Heuristic picture of the almost conical region $\conical$ (in gray) and the concentration region $M\setminus\conical$.} \label{fig:heuristic}
\end{figure}
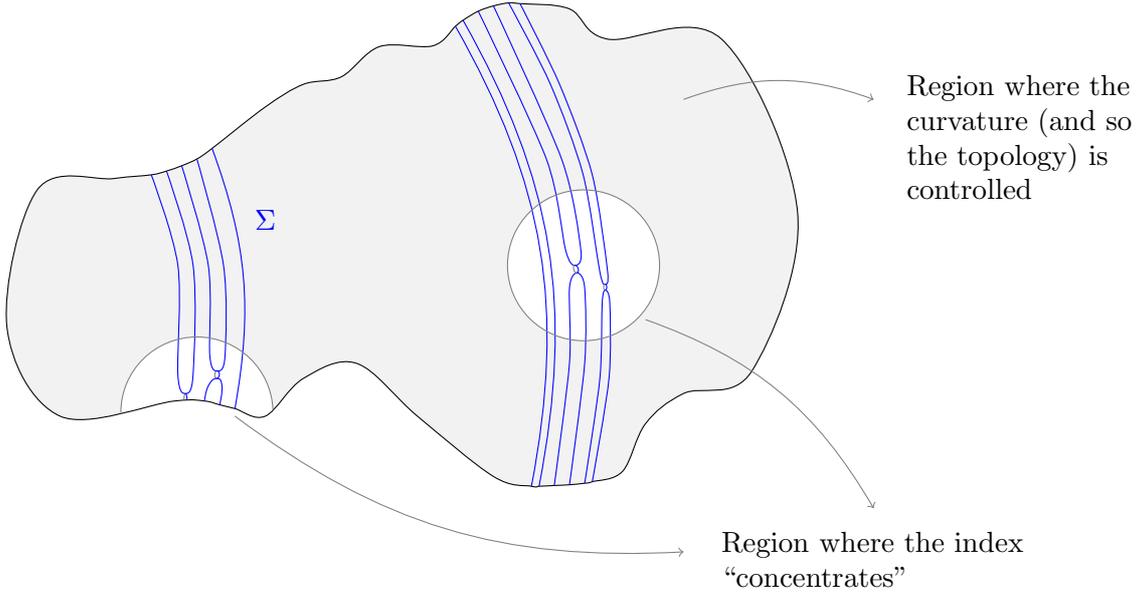

We define (in \cref{sec:conical-region}) an almost conical region $\conical$ such that 
\[
\sum_{k=0}^nb_k(\Sigma) \sim \sum_{k=0}^nb_k(\ms\setminus\conical).
\]
The advantage is that (a suitable subset of) the stability balls centered in $M\setminus\conical$ (whose cardinality controls the sum of the Betti numbers of $M\setminus\conical$) have then, heuristically, a ``binary tree structure''. One should think that the vertices of the tree are the stability balls and an edge between two balls means that their $\lambda$-dilations intersect. The leaves of this binary tree are stability balls whose $\lambda$-dilations are disjoint, and their number is comparable with the number of vertices of the tree (this is a standard property of binary trees). The details are contained in \cref{sec:concentration-region}.

The one described above is a very general outline and it is essentially the same as in the closed case (see \cite{Song2019}*{Section~1} for more details).
However, there are unfortunately many technical details to take care of. 
The main further subtleties in dealing with the free boundary setting regard the construction of the almost conical region. Indeed, in our case with boundary, the almost conical region consists of annuli contained in the interior of $M$ (which behave as in the closed case) but also ``half-annuli'' centered at the boundary $\partial M$. We remark that we are able to avoid annuli that intersect the boundary but are not centered at boundary points.

\begin{case3}
As stated in \cref{thm:main}, in the case $n+1=3$ we are able to keep track of the dependence on the area of the constant in the theorem. The proof differs basically only in one step, namely \cref{thm:BoundaryNonConcentration}. Hence, we decided to described the adaptations to the case $n+1=3$ alongside the higher dimensional case, by using boxes like this one throughout the paper.
\end{case3}

\subsection{Higher dimensional results}
In \cite{Song2019}, Song also proves an estimate on the size of the singular set of
codimension one stationary integral varifolds given that this set is already a priori small (see \cite{Song2019}*{Theorem~2}).
More precisely, if $\ms^n\subset \amb^{n+1}$ is a compactly supported, properly embedded, multiplicity one
stationary integral varifold such that $\Sing(\ms)$ has Hausdorff dimension at most $n-7$ and mass bounded by $\CA$, then
    \begin{equation} \label{eq:bound-higher-dimensions}
        \Haus^{n-7}(\Sing(\ms))
			\leq C_\CA(1+\ind(\ms))^{7/n},
    \end{equation}
where $C_\CA=C_\CA(M,g,\CA) >0$ is a constant depending only on $(M,g)$ and $\CA$.

One of the main tools in lower dimensions was the use of curvature estimates, which are not available in the higher dimensional setting. Instead, Song used a version of \cite{NaberValtorta2020}*{Theorem~1.6} but assuming stability (and a priori smallness of the singular set and bounded area) instead of area-minimizing.
The main consequence is that
\begin{equation}
\label{eq:NaberValtortaEdelen-estimate}
    \Haus^{n-7}(\Sing(\ms)\cap B(p,s_\ms(p)) \leq C_{NV} = C_{NV}(M,g,\CA),
\end{equation}
where the definition of $s_\ms$ is the same as in \eqref{eq:defstabpre}, assuming $\lambda=2$.

A generalization of  \cite{NaberValtorta2020}*{Theorem~1.6} for free boundary area-minimizing codimension one integral currents was proven by Edelen in \cite{Edelen2020}*{Theorem~2.2} (see also Theorem~0.5 therein).
We expect that one can substitute the hypothesis of free boundary area-minimizing by assuming an a priori smallness of the size of the singular set, an area bound and stability.
This would allow us to get an estimate like~\eqref{eq:NaberValtortaEdelen-estimate} in the free boundary case.
Having such a result, we think that it should be possible to prove a result analogous to \eqref{eq:bound-higher-dimensions} but in the free boundary case, following the same lines as in \cite{Song2019}*{Section~5.2}.
This problem might be addressed elsewhere.

\subsection*{Acknowledgements}
This project originated from the Master thesis of the first author under the supervision of Alessandro Carlotto, whom we would like to thank.
We would also like to thank Antoine Song for many helpful discussions and in particular for pointing out a mistake in the first version of this paper. Moreover, we thank Nick Edelen for kindly answering our questions, {and the anonymous referee for helpful comments and suggestions.}
The first author was supported by a US--Spain Fulbright grant.
This project has received funding from the European Research Council (ERC) under the European Union’s Horizon 2020 research and innovation programme (grant agreement No. 947923), and was also partially supported by the NSF award DMS-2104229 and by Simons Foundation International, LTD.

\subsection{Structure of the article}

We provide an outline of the contents of this article by means of the following diagram.

\begin{figure}[htpb]
\usetikzlibrary{trees}
\tikzstyle{every node}=[anchor=west]
\tikzstyle{sec}=[shape=rectangle, rounded corners,draw=gray, dotted,minimum width = 6cm,minimum height = 1.5cm,yshift=-1cm]
\tikzstyle{lat}=[xshift=7cm]
\tikzstyle{optional}=[dashed,fill=gray!50]
\tikzset{bigarr/.style={
decoration={markings,mark=at position 1 with {\arrow[scale=1.7]{>}}},
postaction={decorate}}}
\centering
\begin{tikzpicture}[align=center, font=\small]
\node[sec] (not) {Notation \\ \cref{sec:notation}};

\node [gray!50!black,left of =not, xshift=-4.5cm, yshift=.6cm] {\small \textbf{Preliminaries}};

\draw[gray,rounded corners] ($(not)+(-7,1)$)  rectangle ($(not)+(7,-5)$);

\node[sec] [below of=not,xshift=-4cm,minimum width = 5cm] (prediff) {Preliminaries from \\ differential geometry\\ \cref{sec:pre-diff-geom}}; 
\node[sec] [below of=not,xshift=4cm,minimum width = 5cm] (prefbms) {Preliminaries on free boundary \\ minimal hypersurfaces\\ \cref{sec:pre-fbms}}; 

\node[sec] [below of=prediff, xshift=4cm] (stab) {Stability balls \\ \cref{sec:stability}}; 

\node[sec] [below of=stab] (sett) {Setting and choice of parameters\\ \cref{sec:choice-param}};

\node [gray!50!black,left of =sett, xshift=-5.2cm, yshift=.6cm] {\small \textbf{Core}};

\node[sec] [below of=sett,xshift=-4cm,minimum width = 5cm] (conical) {Almost conical region $\conical$ \\ \cref{sec:conical-region}};

\node[sec] [below of=sett,xshift=4cm,minimum width = 5cm] (concentration) {Concentration region $M\setminus\conical$ \\ \cref{sec:concentration-region}}; 

\node[sec] [below of=conical,xshift=4cm,yshift=-.5cm] (proof) {Proof of the main theorem \\ \cref{sec:proof}}; 
\draw[bigarr,gray] ([yshift=-1mm]conical.south) --([xshift=-20mm,yshift=1mm]proof.north);
\draw[bigarr,gray] ([yshift=-1mm]concentration.south) --([xshift=20mm,yshift=1mm]proof.north);

\draw[gray,rounded corners] ($(proof)+(-7,5.5)$)  rectangle ($(proof)+(7,-1)$);

\end{tikzpicture}
\end{figure}

\section{Notation}\label{sec:notation}

Let us denote by $\R^{n+1}_+ \eqdef \{(x^1,\ldots, x^{n+1})\in\R^{n+1}\st x^{n+1}\ge 0\}$ the half-space in $\R^n$, and by $\partial \R^{n+1}_+ = \{(x^1,\ldots, x^{n+1})\in\R^{n+1}\st x^{n+1} = 0\}$ its boundary. Moreover, let $\omega_n$ be the volume of the $n$-th dimensional Euclidean unit ball.

We write $b_k(\ms)$ for the $k$-th Betti number of 
a differentiable manifold $\ms$, which we define as the dimension of the $k$-th de Rham cohomology group. Betti numbers are usually defined as the rank of the singular homology groups with $\Z$ coefficients, but this turns out to be the same, by the Universal Coefficients theorems for homology and cohomology (cf. \cite{Hatcher2002}*{Theorems~3A.3 and~3.2}) and de Rham's theorem (see \cite{Warner1983}*{Theorem~4.17}). 

When we write $C = C(M,g)$, we mean that the constant $C$ depends only on $(M,g)$. We use a similar convention to express dependency on other constants.

We put two capital letters as subscript for better reference to the several constants that appear in the paper. We decided to do so, because this allows us to track the dependence of $C_\CA$ in \cref{thm:main} on the other constants (see \eqref{eq:dep-const}).
The following table shows what the different names of the constants stand for.

\vspace{2ex}
\begin{center}
\begin{tikzpicture}
\node (table) [inner sep=0pt] {
\bgroup
\def\arraystretch{1.3}
\begin{tabular}{  p{5ex} p{41ex} | p{5ex} p{41ex}}
$C_{AC}$ & Acyclic Covering, \cref{lem:cover-and-betti}  &    $C_{MF}$ & Monotonicity Formula, \cref{thm:monotonicity}\\
  $C_{BC}$ & Besicovitch Covering, \cref{thm:Besicovitch} & $C_{DE}$ & Density Estimate,  \cref{cor:density-estimate}  \\
  $C_{VB}$ & Volume Bound,  \cref{lem:Besicovitch-type-lemma}  &  $C_{CE}$ & Curvature Estimates, \cref{thm:curvature-estimates}\\
  $C_{LF}$ & Layered Family, \cref{prop:we-can-layer} &  $C_{CS}$ &  Components Stability, \cref{prop:sheeting-results}  \\
  $C_{PL}$ & Previous Layers, \cref{lem:fact1} &  $C_{CT}$ & Components Telescopes,  \cref{prop:ConicalRegion} \\
 $C_{NL}$ & Next Layers, \cref{lem:fact2} & $C_{MI}$ & Morse Index, \cref{thm:DisjointIndex}  \\
\end{tabular}
\egroup
};
\draw [rounded corners] (table.north west) rectangle (table.south east);
\end{tikzpicture}
\end{center}
\vspace{1ex}

%% ----- Preliminaries --------------------

\section{Preliminaries from differential geometry} \label{sec:pre-diff-geom}

In this section, we collect some preliminary definitions and results from differential geometry.

\subsection{Convexity radius}
Let us give the definition of convexity radius that we use in this paper in place of the injectivity radius, which is not universally defined in manifolds with boundary.
Let us assume that $(M^{n+1},g)$ is a compact Riemannian manifold with (possibly empty) boundary.

\begin{definition} \label{def:convex-set}
We say that $U\subset M$ is a \emph{convex subset of $M$} if for all $p, q\in U$ there is a \emph{unique} length-minimizing curve connecting $p$ and $q$ in $M$ and this length-minimizing curve is contained in $U$.
\end{definition}
{
\begin{remark}
Here, by length-minimizing curve connecting $p,q\in M$, we mean a curve realizing the distance $d(p,q)$ (defined as the infimum of the lengths of curves connecting $p$ and $q$). Note that such a curve is not necessarily a geodesic in the standard Riemannian geometric sense, {because it could have a contact set with $\partial M$ (behaving like an obstacle, see \cite{AlexanderBergBishop1987}).}
Moreover, given $p\in M$ and $r>0$, $B(p,r)\eqdef \{q\in M \st d(p,q)<r\}$ is not necessarily the image of the ball of radius $r$ via the exponential map centered in $p$.
We refer to \cite{AlexanderBergBishop1987} and \cite{AlexanderBergBishop1993} for a discussion about local distance geometry in manifolds with boundary. 
\end{remark}
}
\begin{remark} \label{rem:convex-trivial}
Note that every convex subset is simply connected, {because of the uniqueness assumption in \cref{def:convex-set}}.
\end{remark}

\begin{definition} \label{def:convexity-radius}
Given any point $x\in M$, we define the \emph{convexity radius $\conv_M(x)$ of $M$ at $x$} as
\[
\conv_M(x) \eqdef \sup\{r>0 \st B(x,r) \text{ is a convex subset of $M$}\}.
\]
Moreover, we define the \emph{convexity radius $\conv_M$ of $M$} as $\conv_M\eqdef \inf_{x\in M} \conv_M(x)$.
\end{definition}

\begin{proposition}
The convexity radius on $(M^{n+1},g)$ is bounded away from $0$, namely $\conv_M>0$.
\end{proposition}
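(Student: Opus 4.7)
The plan is to prove the result in two stages: first establish that $\conv_M(x)>0$ at every point $x\in M$, then upgrade to a uniform positive lower bound using compactness.

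\emph{Stage 1 (pointwise positivity).} Fix $x\in M$ and split into cases. If $x\in\mathrm{int}(M)$ with $d(x,\partial M)=\delta>0$, then for any $r<\delta$ the ball $B(x,r)$ is contained in the interior of $M$ and coincides with the usual Riemannian metric ball in the open manifold $(\mathrm{int}(M),g)$. The classical existence of strongly convex balls on Riemannian manifolds without boundary (controlled in terms of the injectivity radius at $x$ and a bound on the sectional curvature near $x$) yields some $r_x\in(0,\delta)$ such that $B(x,r_x)$ is a convex subset in the sense of \cref{def:convex-set}, hence $\conv_M(x)\ge r_x>0$. If instead $x\in\partial M$ (or lies very close to $\partial M$, so that every ball around it meets $\partial M$), then length‑minimizing curves may slide along $\partial M$ as an obstacle and need not be smooth geodesics. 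Here I would invoke the local distance geometry developed by Alexander--Berg--Bishop in the papers \cite{AlexanderBergBishop1987} and \cite{AlexanderBergBishop1993}, cited in the paper: since $(M,g)$ is compact and smooth, $\partial M$ has uniformly bounded second fundamental form, and their analysis provides, at each boundary point, a positive convexity scale $r_x$ below which any two points of $B(x,r_x)$ are joined by a unique length‑minimizing curve that remains inside $B(x,r_x)$. In either case $\conv_M(x)>0$.

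\emph{Stage 2 (uniform lower bound).} Once pointwise positivity is known, I would argue by contradiction. Suppose $\conv_M=0$; then there exists a sequence $x_k\in M$ with $\conv_M(x_k)\to 0$. By compactness of $M$ we may pass to a subsequence with $x_k\to x_\infty\in M$. Set $R\eqdef\conv_M(x_\infty)/2>0$ by Stage 1, so that $B(x_\infty,2R)$ is a convex subset of $M$. For $k$ large enough we have $d(x_k,x_\infty)<R/2$, and by the triangle inequality $B(x_k,R/2)\subset B(x_\infty,2R)$. I would then show that $B(x_k,R/2)$ is itself convex: given $p,q\in B(x_k,R/2)$, convexity of $B(x_\infty,2R)$ produces a unique length‑minimizing curve $\gamma$ from $p$ to $q$ in $M$, a priori only known to remain in $B(x_\infty,2R)$. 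Any $z\in\gamma$ satisfies $d(p,z)+d(z,q)=d(p,q)<R$, so $\min(d(p,z),d(z,q))<R/2$, giving $d(x_k,z)<R$, i.e.\ $\gamma\subset B(x_k,R)$. This already yields $\conv_M(x_k)\ge c R$ for a universal $c>0$ (possibly by iterating the argument or shrinking the radius), contradicting $\conv_M(x_k)\to 0$.

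\emph{Main obstacle.} The delicate point is the boundary case of Stage 1: length‑minimizing curves near $\partial M$ are not geodesics in the usual sense and the exponential map is not directly applicable, so standard Riemannian arguments do not transfer verbatim. I would offload this difficulty to the Alexander--Berg--Bishop machinery already cited by the paper, rather than reproving it. A secondary subtlety is the triangle‑inequality bookkeeping in Stage 2: the naive inclusion $B(x_k,R/2)\subset B(x_\infty,2R)$ guarantees only that the minimizers stay in $B(x_\infty,2R)$, and one must recover that they stay close to $x_k$ using the length bound $d(p,q)<R$. Choosing the radii carefully (e.g.\ $R/4$ in place of $R/2$) makes this quantitative and yields the desired uniform positive lower bound on $\conv_M$.
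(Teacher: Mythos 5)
The paper's proof is a one-line citation: the quantitative local convexity estimates of Alexander--Berg--Bishop in \cite{AlexanderBergBishop1987}*{Theorem~5} and \cite{AlexanderBergBishop1993}*{Corollary~2} bound $\conv_M(x)$ from below in terms of local geometric data (curvature of $g$ and of $\partial M$, etc.), and compactness of $M$ makes those bounds uniform. In other words, the cited results already deliver the uniform lower bound directly; no ``pointwise positivity $+$ contradiction'' mechanism is needed. Your Stage~1 is consistent with this (with a slightly more elementary route in the interior), but your Stage~2 attempts to manufacture uniformity by hand, and that step has a real gap.

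Concretely, in Stage~2 you show that for $p,q\in B(x_k,R/2)$ the unique minimizer $\gamma\subset B(x_\infty,2R)$ satisfies $\gamma\subset B(x_k,R)$. But to conclude $\conv_M(x_k)\ge R/2$ you would need $\gamma\subset B(x_k,R/2)$: recall that \cref{def:convex-set} requires the minimizing curve to be \emph{contained in the ball itself}. The factor of~$2$ you lose is structural, not cosmetic. If $p,q\in B(x_k,r)$, then the midpoint estimate gives only $\gamma\subset B(x_k,2r)$, and this is sharp for the triangle inequality alone; neither iterating the argument nor replacing $R/2$ by $R/4$ removes it, since both $r$ and the containment radius scale by the same factor. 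To make Stage~2 rigorous you would need a genuine lower semicontinuity (or Lipschitz) property of $x\mapsto\conv_M(x)$, and that is a second-order statement --- it rests on the convexity of $t\mapsto d^2(z,\gamma(t))$ along minimizers for $z$ in a small ball, not on the metric triangle inequality. That, in turn, is essentially the content of the quantitative convexity estimates you already invoked in Stage~1; using them uniformly (which compactness permits) renders Stage~2 superfluous, which is exactly what the paper does.
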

\begin{proof}
The proof follows from \cite{AlexanderBergBishop1987}*{Theorem 5} and \cite{AlexanderBergBishop1993}*{Corollary 2}, since $M$ is compact.
\end{proof}

\subsection{Almost acyclic covers and Betti numbers}
In this subsection we define the notion of almost acyclic cover and we recall a topological lemma relating an almost acyclic cover to the Betti numbers of a manifold.

\begin{definition} \label{def:acyclic-overlap}
    Let $\ms^n$ be a smooth differentiable manifold. We say that an open cover $\mathcal{U}$ of $\ms$ is \emph{$\alpha$-almost acyclic} if every {nonempty} finite intersection of elements of $\mathcal{U}$ has sum of the Betti numbers less or equal than $\alpha$. 
    If $\alpha = 1$, we say that the cover is \emph{acyclic} (e.g. the only possibly nontrivial Betti number is $b_0(\ms)$).

    Moreover, we say that the \emph{overlap} of a cover $\mathcal{U}$ is at most $\beta\in\N$ if any element of $\mathcal{U}$ intersects at most $\beta$ other distinct elements of $\mathcal{U}$.
\end{definition}

\begin{lemma} [{cf. \cite{Song2019}*{Lemma 26}}] \label{lem:cover-and-betti}
    Given $n,\alpha,\beta\in\N$, there exists $C_{AC}=C_{AC}(n,\alpha,\beta)>0$ such that the following statement holds. Let $\ms^n$ be a smooth differentiable manifold that admits an open $\alpha$-almost acyclic cover $\mathcal{U}$ with overlap at most $\beta$. Then it holds
    \begin{equation*} \label{eq:topological-bound}
        \sum_{k=0}^n b_k(\ms)
        \leq C_{AC}\abs{\mathcal{U}}.
    \end{equation*}
\end{lemma}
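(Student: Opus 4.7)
The plan is to apply the \v{C}ech--de Rham (Mayer--Vietoris) spectral sequence associated to the cover $\mathcal{U}$. Recall that for any open cover $\mathcal{U}=\{U_i\}$ of a smooth manifold $\ms$, there is a spectral sequence with first page
\[
E_1^{p,q} \;=\; \bigoplus_{i_0<\cdots<i_p} H^q\bigl(U_{i_0}\cap\cdots\cap U_{i_p};\R\bigr),
\]
where the sum is restricted to tuples with nonempty intersection, converging to $H^{p+q}(\ms;\R)$. Since $E_\infty^{p,q}$ is the associated graded of a filtration on $H^{p+q}(\ms;\R)$ and is itself a subquotient of $E_1^{p,q}$, we obtain
\[
\sum_{k=0}^n b_k(\ms) \;=\; \sum_{k=0}^n \dim_\R H^k(\ms;\R) \;\leq\; \sum_{p,q\geq 0} \dim_\R E_1^{p,q}.
\]
It therefore suffices to bound the right-hand side by a constant multiple of $\abs{\mathcal{U}}$.

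First I would use the overlap hypothesis to count nonempty $(p+1)$-fold intersections. If $U_{i_0}\cap\cdots\cap U_{i_p}\neq\emptyset$ and $p\geq 1$, then $U_{i_0}$ meets each of the $p$ distinct elements $U_{i_1},\ldots,U_{i_p}$. The overlap bound $\beta$ thus forces $p\leq\beta$, and once $i_0$ is fixed as the smallest index there are at most $\binom{\beta}{p}$ admissible choices for $\{i_1,\ldots,i_p\}$. Summing over the possible values of $i_0$ yields that the total number of nonempty $(p+1)$-fold intersections is at most $\abs{\mathcal{U}}\binom{\beta}{p}$ (with $\binom{\beta}{0}=1$ handling $p=0$).

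The $\alpha$-almost acyclic hypothesis gives $\sum_{q\geq 0}\dim_\R H^q(W;\R)\leq\alpha$ for every nonempty intersection $W$ appearing above, so
\[
\sum_{q\geq 0}\dim_\R E_1^{p,q} \;\leq\; \alpha\,\binom{\beta}{p}\,\abs{\mathcal{U}}.
\]
Summing over $p\in\{0,\ldots,\beta\}$ and using $\sum_{p=0}^\beta \binom{\beta}{p}=2^\beta$ yields
\[
\sum_{k=0}^n b_k(\ms) \;\leq\; \alpha\,2^{\beta}\,\abs{\mathcal{U}},
\]
so one may take $C_{AC}(n,\alpha,\beta)=\alpha\,2^{\beta}$ (the dimension $n$ plays no direct role in the constant, since the hypothesis already caps the total Betti number of any intersection).

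No serious obstacle is expected: the argument is a textbook spectral-sequence computation combined with elementary combinatorics. The only point requiring some care is invoking the \v{C}ech--de Rham spectral sequence for an \emph{arbitrary} open cover rather than a good one, which is precisely what makes the almost-acyclic hypothesis (in place of strict acyclicity, as in the classical Nerve theorem) a natural and sufficient replacement.
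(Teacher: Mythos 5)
Your proof is correct and takes essentially the same route as the paper, which defers to the appendix of Song's paper and describes the argument as ``an application of Mayer--Vietoris theorem for cohomology'' within ``the framework of \v{C}ech cohomology'': this is exactly the Mayer--Vietoris (\v{C}ech--de Rham) spectral sequence you invoke, with the overlap hypothesis forcing $p\le\beta$ and bounding the number of nonempty $(p+1)$-fold intersections by $\abs{\mathcal{U}}\binom{\beta}{p}$, and the $\alpha$-almost acyclicity controlling the cohomology of each piece, yielding the explicit constant $C_{AC}=\alpha\,2^\beta$.
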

\begin{proof}
    The proof is essentially an application of Mayer--Vietoris theorem for cohomology, and fits in the framework of \v{C}ech cohomology.
    It is carried out in detail in the Appendix of \cite{Song2019} (see also \cite{BallmannGromovSchroeder1985}*{Lemma~12.12} for the result in the case of acyclic covers).
\end{proof}

\subsection{Besicovitch theorem}

Here we recall Besicovitch theorem, which is useful to extract subcovers with controlled overlap.

\begin{theorem} [Besicovitch theorem]\label{thm:Besicovitch}
    Let $(\amb^{n+1},g)$ be a compact Riemannian manifold with (possibly empty) boundary. Then there exist an integer $C_{BC} = C_{BC}(\amb,g)\ge 1$ and a constant $0<r_{BC}= r_{BC}(M,g) <\conv_M $ such that the following property holds.
	
	Let $\mathcal{F}$ be a collection of (nondegenerate) balls with radius at most $r_{BC}$ and let $S$ be the set of centers of the balls in $\mathcal{F}$. Then there exist $C_{BC}$ subfamilies $\mathcal{B}^{(1)},\ldots,\mathcal{B}^{(C_{BC})}\subset\mathcal{F}$ of disjoint balls in $\mathcal{F}$ such that
	\[
		S\subset \bigcup_{i=1}^{C_{BC}} \bigcup_{B\in\mathcal{B}^{(i)}} B.
	\]

\end{theorem}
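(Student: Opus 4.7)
The plan is to reduce the claim to the classical Euclidean Besicovitch covering theorem by working in a finite atlas of almost-Euclidean charts on $M$. First, using compactness of $M$ together with normal coordinates on balls in the interior and Fermi coordinates along $\partial M$, I will fix $r_0 \in (0, \conv_M)$ and a finite family of points $q_1, \ldots, q_N \in M$ (with $N = N(M,g)$) such that $M = \bigcup_{j=1}^{N} B(q_j, r_0/4)$ and each $B(q_j, r_0)$ sits inside the domain of a chart $\phi_j$ taking values in $\R^{n+1}$ or $\R^{n+1}_+$, which is $2$-bilipschitz between $d_g$ and the Euclidean distance on its image. I then set $r_{BC} \eqdef r_0/8$, which by construction is bounded above by $\conv_M$.

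Given $\mathcal{F}$, I will partition it into subcollections $\mathcal{F}_1, \ldots, \mathcal{F}_N$ by assigning each $B(p,r) \in \mathcal{F}$ to the smallest index $j$ with $p \in B(q_j, r_0/4)$; since $r \le r_{BC}$, such a ball lies entirely inside the domain of $\phi_j$. Pushing $\mathcal{F}_j$ forward through $\phi_j$ and applying the classical Euclidean Besicovitch theorem (with radii mildly rescaled so that Euclidean disjointness of slightly inflated balls forces Riemannian disjointness of the originals, while Euclidean coverage by slightly deflated balls still covers all centers), I obtain a purely dimensional constant $\tilde{C}_n$ and a splitting of $\mathcal{F}_j$ into $\tilde{C}_n$ subfamilies of pairwise disjoint Riemannian balls whose union covers every center in $\mathcal{F}_j$. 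Concatenating the $N \cdot \tilde{C}_n$ subfamilies produced across all charts yields the required decomposition, with $C_{BC} \eqdef N \cdot \tilde{C}_n = C_{BC}(M,g)$.

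The main technical subtlety that I foresee is calibrating the bilipschitz distortion in each chart: the Euclidean Besicovitch theorem yields disjoint Euclidean balls covering Euclidean centers, but the conclusion demands disjoint Riemannian balls covering points under $d_g$. This is handled by taking the bilipschitz constant arbitrarily close to $1$ (by shrinking $r_{BC}$ if needed), so that suitably inflating and deflating the Euclidean balls by the bilipschitz factor makes Riemannian disjointness and Riemannian coverage simultaneously attainable, at the cost of only a controlled enlargement of the dimensional constant. The boundary case is treated in the same way, provided one uses Fermi coordinates near $\partial M$ so that the chart image lies in $\R^{n+1}_+$ and the Euclidean Besicovitch theorem there is invoked for balls whose centers lie in the closed half-space.
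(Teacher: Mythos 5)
Your chart-based reduction to the Euclidean Besicovitch theorem is a genuinely different route from the paper's, which simply observes that $M$ is a directionally limited metric space in Federer's sense --- combining local uniqueness of length-minimizing curves from \cite{AlexanderBergBishop1987}*{Theorem 5} with compactness of metric spheres --- and then invokes \cite{Federer1969}*{Theorem 2.8.14} directly, with no charts. Your strategy is plausible, but the ``inflate and deflate'' step, as you describe it, leaves a real gap.

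Concretely, let $L>1$ be the bilipschitz constant of the chart $\phi_j$, and suppose you apply the classical Euclidean Besicovitch theorem to balls $B_{\mathrm{Eucl}}(\phi_j(p),\rho(r))$ for some rescaling $\rho(r)$ of the Riemannian radius $r$. To upgrade Euclidean disjointness of the selected balls to Riemannian disjointness of $B(p,r)$ you need $\phi_j(B(p,r))\subset B_{\mathrm{Eucl}}(\phi_j(p),\rho(r))$, forcing $\rho(r)\ge Lr$; to upgrade Euclidean coverage of the pushed-forward centers to Riemannian coverage you need $\phi_j^{-1}\bigl(B_{\mathrm{Eucl}}(\phi_j(p),\rho(r))\bigr)\subset B(p,r)$, forcing $\rho(r)\le r/L$. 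These two constraints are incompatible for every $L>1$, no matter how close to $1$, so a single application of the classical theorem with one rescaling cannot deliver both disjointness and coverage at once. What would make the argument go through is a \emph{dilation-robust} version of Euclidean Besicovitch: for $\tau$ close to $1$ one can cover the centers by $C_{n,\tau}$ subfamilies such that the $\tau$-dilations (not just the balls themselves) within each subfamily are pairwise disjoint, with $C_{n,\tau}$ bounded as $\tau\to 1^+$; applying this with $\tau=L^2$ resolves the tension. That statement is true, but it is not the classical theorem --- it requires re-running the greedy selection and degree-count argument while tracking the dilation factor, or, equivalently, checking that the directionally-limited constant is stable under bilipschitz perturbations near the identity, which is essentially what Federer's framework packages up. As written, you implicitly rely on this strengthened statement without proving it.
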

\begin{proof}
    Manifolds with boundary have the property of local uniqueness of length-minimizing curves (see \cite{AlexanderBergBishop1987}*{Theorem 5}) and their metric spheres are compact.
    These two facts imply the property of being directionally limited as defined in \cite{Federer1969}*{Section~2.8.9}. Hence, the theorem follows from \cite{Federer1969}*{Theorem~2.8.14}.
\end{proof}

\subsection{Layered family of balls}
In this section, let us assume that $(M^{n+1},g)$ is a compact Riemannian manifold with (possibly empty) boundary.
Later, it will be convenient to divide families of the geodesic balls of $M$ into subfamilies of balls with comparable radii. To this purpose, we introduce some definitions and notation, and we recall some properties.

\begin{definition}
Let $\mathcal{F}$ be the set of geodesic balls in $M$ of radius less than $1$. 
For each integer $k\ge 0$, define the \emph{$k$-layer} of $\mathcal{F}$ as
\begin{equation*}
    \mathcal{F}_k \eqdef \{ B(x,r)\in\mathcal{F} \st r\in[2^{-(k+1)}, 2^{-k})\}.
\end{equation*}
For any subset $\mathcal{B}\subset\mathcal{F}$ of geodesic balls, we denote by $\mathcal{B}_k=\mathcal{B}\cap \mathcal{F}_k$ the $k$-layer of $\mathcal{B}$.
\end{definition}

\begin{definition} \label{def:layered}
Let $\mathcal{B}\subset\mathcal{F}$ be a family of geodesic balls with radius less than $1$ and let $\mu\ge 1$.
\begin{enumerate} [label={\normalfont(\roman*)}]
\item We say that $\mathcal{B}$ is \emph{$\mu$-disjoint} if, for all $b,b'\in\mathcal{B}$, either $b=b'$ or $\mu b\cap \mu b'=\emptyset$.

\item We say that $\mathcal{B}$ is \emph{layered $\mu$-disjoint} if $\mathcal{B}\cap\mathcal{F}_k$ is $\mu$-disjoint for all $k\ge 0$.
\end{enumerate}
\end{definition}

\begin{lemma} \label{lem:Besicovitch-type-lemma}
Given $\lambda\ge 2$,  there exist $0<\bar{r} = \bar{r}(M,g,\lambda)<1$ small enough and a constant $C_{VB}=C_{VB}(M,g,\lambda)\ge1$ such that the following properties hold.
    \begin{enumerate} [label={\normalfont(\roman*)}]
    \item\label{btl:i} Let $B$ be a ball of radius at most $\bar{r}$ and let $\mathcal{B}$ be a family of disjoint balls with radius at most~$\bar{r}$ and satisfying that $\radius(B)/2\leq \radius(b) \leq  2\radius(B)$ for all $b\in \mathcal{B}$.
    Then we have
	\[
	    \abs{\{ b\in \mathcal{B} \st 3\lambda b \cap 3\lambda B \neq\emptyset \}} \leq C_{VB}.
	\]
	\item\label{btl:ii} Let $\mathcal{B}$ be a family of disjoint balls with radius \emph{equal to} $\bar r$, then $\abs{\mathcal{B}}\leq C_{VB}$.
	\end{enumerate}
\end{lemma}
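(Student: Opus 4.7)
The plan is to prove both parts by a standard volume-comparison argument, after choosing $\bar r$ small enough that a uniform two-sided Bishop-type estimate holds for balls in $M$.

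First I would fix $\bar r = \bar r(M,g,\lambda) \in (0,1)$ small enough — smaller than $\conv_M$ and in particular smaller than $(22\lambda)^{-1}\cdot \conv_M$ — so that for every $x \in M$ and every $r \leq 22\lambda\bar r$ there exist constants $v_{\min}, v_{\max} > 0$, depending only on $(M,g)$, such that
\[
v_{\min}\, r^{n+1} \;\leq\; \vol\bigl(B(x,r)\bigr) \;\leq\; v_{\max}\, r^{n+1}.
\]
Such an estimate follows from compactness of $M$ together with the standard volume comparison theorems; the lower bound holds uniformly even at boundary points, up to possibly halving the constant, since near a boundary point one can compare the geodesic ball to a half-ball in a normal coordinate chart.

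For part \ref{btl:i}, let $B = B(x_0, \rho)$ with $\rho \leq \bar r$, and let $b = B(y, s) \in \mathcal{B}$ be a ball with $3\lambda b \cap 3\lambda B \neq \emptyset$. Then $\dist(x_0, y) \leq 3\lambda(\rho + s) \leq 9\lambda \rho$ by the radius assumption $s \leq 2\rho$, and therefore
\[
b \;\subset\; B(x_0,\, 9\lambda\rho + s) \;\subset\; B(x_0,\, 11\lambda\rho).
\]
Since the balls in $\mathcal{B}$ are disjoint and each has radius at least $\rho/2$, volume additivity combined with the two-sided volume estimate gives
\[
\bigl|\{ b\in\mathcal{B} \st 3\lambda b \cap 3\lambda B \neq \emptyset \}\bigr|
\;\leq\; \frac{\vol\bigl(B(x_0, 11\lambda \rho)\bigr)}{\inf_{b}\vol(b)}
\;\leq\; \frac{v_{\max}(11\lambda\rho)^{n+1}}{v_{\min}(\rho/2)^{n+1}}
\;=\; \frac{v_{\max}}{v_{\min}}\,(22\lambda)^{n+1},
\]
which depends only on $(M,g)$ and $\lambda$ and can be taken as $C_{VB}$.

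For part \ref{btl:ii}, any family $\mathcal{B}$ of disjoint balls in $M$ of radius exactly $\bar r$ satisfies
\[
|\mathcal{B}|\,v_{\min}\,\bar r^{\,n+1} \;\leq\; \sum_{b\in\mathcal{B}} \vol(b) \;\leq\; \vol(M),
\]
so $|\mathcal{B}| \leq \vol(M)/(v_{\min}\bar r^{\,n+1})$, which is again a constant depending only on $(M,g)$ and $\lambda$ (through $\bar r$). Enlarging $C_{VB}$ to be the maximum of the two bounds produced in \ref{btl:i} and \ref{btl:ii} yields the claim.

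The only delicate point is securing the uniform lower volume bound near $\partial M$; this is where the smallness of $\bar r$ and the compactness of $M$ enter, since one needs a radius below which every geodesic ball admits a comparison to a Euclidean (half-)ball via a quasi-isometric chart with controlled distortion.
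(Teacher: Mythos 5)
Your proof is correct and follows essentially the same route as the paper's: bound the cardinality by a volume-comparison argument, using that for $\bar r$ sufficiently small the volumes of geodesic balls of radius $r \lesssim \lambda\bar r$ are comparable to $r^{n+1}$ uniformly over $M$ (including near $\partial M$, where the density approaches $1/2$ rather than $1$). The only cosmetic differences are in the numerical constants (the paper absorbs $b$ into $10\lambda B$ rather than $11\lambda B$, and phrases the two-sided volume bound as $1/4 \le \Theta^M_g(p,r) \le 2$) and in that the paper compares against the shrunk balls $B(x_b,\radius(B)/2)\subset b$ whereas you lower-bound $\vol(b)$ directly, which is equivalent.
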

\begin{proof}
    Let $\mathcal{F}=  \{ b\in\mathcal{B} \st 3\lambda b \cap 3\lambda B \neq\emptyset \}$ be the subcollection of balls whose size we want to estimate in \ref{btl:i}.
    Let $x_b$ be the center of $b\in \mathcal{F}$ and $x_B$ be the center of $B$.
    Then we see that 
    \[
        d(x_b,x_B)
            \leq 3\lambda \radius(b)+ 3\lambda \radius(B)
            \leq 9\lambda\radius(B).
    \]
    This means that $b$ is contained in the ball centered at $x_B$ and with radius $9\lambda\radius(B)+\radius(b)\le 10\lambda \radius(B)$, and hence certainly in $B' = 10\lambda B$.
    
    We also know that $b\supset B(x_b,\radius(B)/2)$. By the monotonicity of $\Haus^{n+1}$ with respect to inclusion, we have that
    \[
        \sum_{b\in\mathcal{F}} \Haus^{n+1}(B(x_b,\radius(B)/2))
            \leq \sum_{b\in\mathcal{F}} \Haus^{n+1}(b)
            = \Haus^{n+1}\left(\bigcup_{b\in\mathcal{F}} b\right)
            \leq \Haus^{n+1}(B'),
    \]
    where we used that the balls in $\mathcal{B}\supseteq \mathcal{F}$ are disjoint.
    
    If we consider the density $\Theta^M_g(p,r) = {\Haus^{n+1}(B(p,r))}/{(\omega_{n+1}r^{n+1})}$, we know that, if $p\in\amb\setminus\partial\amb$, then $\displaystyle \lim_{r\to 0^+} \Theta^M_g(p,r) = 1$ and that $\displaystyle \lim_{r\to 0^+} \Theta^M_g(p,r) = 1/2$ if $p\in\partial \amb$.
   	Therefore, by compactness of $M$, we can make $\bar{r} = \bar{r}(\amb ,g,\lambda)>0$ small enough so that
   	$1/4\leq \Theta^M_g(p,r) \leq 2$ for every $p\in\amb$ and $0<r\leq 20\lambda\bar{r}$ which means that
   	\[
   	    \frac{\omega_{n+1}r^{n+1}}{4}
   	        \leq \Haus^{n+1}(B(p,r))
   	        \leq 2\omega_{n+1}r^{n+1}.
   	\]
   	
   	From that and the previous estimates, we get
   	\[
        \abs{\mathcal{F}} \frac{\omega_{n+1}\radius(B)^{n+1}}{2^{n+3}}
            \leq 2\omega_{n+1}(10\lambda\radius(B))^{n+1},
    \]
    which implies
    \[
        \abs{\mathcal{F}}
            \leq 2^{n+4}(10\lambda)^{n+1}.
    \]
    
    In case \ref{btl:ii}, where every ball in $\mathcal{B}$ has radius $\bar{r}$, {as chosen above,}  we have that
    \[
        \abs{\mathcal{B}} \frac{\omega_{n+1}\bar{r}^{n+1}}{4}
            \leq \sum_{b\in\mathcal{B}}\Haus^{n+1}(b)
            = \Haus^{n+1}\left(\bigcup_{b\in\mathcal{B}} b\right)
            \leq \Haus^{n+1}(\amb).
    \]
    Therefore, it suffices to take $\displaystyle C_{VB} \eqdef \max\left\{ 2^{n+4}(10\lambda)^{n+1} ,\, \frac{4\Haus^{n+1}(\amb)}{\omega_{n+1}\bar{r}^{n+1}} \right\}$.
\end{proof}

Using this lemma we show that, given a finite disjoint family, we can extract a layered family in the sense of \cref{def:layered}, with size bounded below by a definite fraction of the size of the initial family.
\begin{proposition} \label{prop:we-can-layer}
Given $\lambda\ge 2$,  there exist $0<\bar{r} = \bar{r}(M,g,\lambda)<1$ small enough and a constant $C_{LF}=C_{LF}(M,g,\lambda)> 0$ such that, for any given finite family $\mathcal{B}$ of disjoint of balls with radius at most $\bar{r}$, we can find a layered $3\lambda$-disjoint family $\mathcal{B}'$ satisfying
    \[
        \abs{\mathcal{B}}\leq C_{LF} \abs{\mathcal{B}'}.
    \]
\end{proposition}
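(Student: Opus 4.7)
The plan is to take the $\bar{r}$ from \cref{lem:Besicovitch-type-lemma} and to construct $\mathcal{B}'$ layer by layer: within each layer $\mathcal{B}_k\eqdef \mathcal{B}\cap \mathcal{F}_k$, balls have radii in $[2^{-(k+1)},2^{-k})$, so any two balls in $\mathcal{B}_k$ have ratio of radii in $[1/2,2]$. Hence the hypothesis of \cref{lem:Besicovitch-type-lemma}\ref{btl:i} is satisfied, and for each fixed $B\in\mathcal{B}_k$ we have
\[
\abs{\{b\in \mathcal{B}_k \st 3\lambda b\cap 3\lambda B\neq \emptyset\}}\le C_{VB}.
\]

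With this bound in hand, I would extract a $3\lambda$-disjoint subfamily $\mathcal{B}'_k\subset \mathcal{B}_k$ by a standard greedy argument: enumerate $\mathcal{B}_k=\{B_1,\ldots,B_N\}$ arbitrarily, and at each step pick the first ball whose $3\lambda$-dilation does not meet the $3\lambda$-dilation of any previously picked ball. Equivalently, view $\mathcal{B}_k$ as the vertex set of a graph where we join $B$ and $B'$ whenever $3\lambda B\cap 3\lambda B'\neq\emptyset$; by the above estimate this graph has maximum degree at most $C_{VB}$, so the greedy algorithm produces an independent set of cardinality at least $\abs{\mathcal{B}_k}/(C_{VB}+1)$.

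Setting $\mathcal{B}'\eqdef \bigcup_{k\ge 0}\mathcal{B}'_k$, the subfamily $\mathcal{B}'$ is layered $3\lambda$-disjoint by construction (the layers are disjoint as subfamilies of $\mathcal{B}$, and inside each layer the $3\lambda$-dilations are pairwise disjoint). Summing over $k$ gives
\[
\abs{\mathcal{B}'}=\sum_{k\ge 0}\abs{\mathcal{B}'_k}\ge \frac{1}{C_{VB}+1}\sum_{k\ge 0}\abs{\mathcal{B}_k}=\frac{\abs{\mathcal{B}}}{C_{VB}+1},
\]
so the conclusion holds with $C_{LF}\eqdef C_{VB}+1$, which depends only on $(M,g)$ and $\lambda$ through $C_{VB}$.

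There is no real obstacle here beyond verifying that the radii within each layer genuinely satisfy the comparability hypothesis of \cref{lem:Besicovitch-type-lemma}\ref{btl:i}, which is immediate from the definition of $\mathcal{F}_k$, and that the bound $C_{VB}$ is uniform (i.e. does not deteriorate with $k$), which is the content of that lemma once $\bar{r}$ is fixed. The only minor care is that the greedy procedure must be applied separately in each layer, so that the resulting family is \emph{layered} $3\lambda$-disjoint as opposed to globally $3\lambda$-disjoint — a weaker property which is exactly what is needed later in the paper.
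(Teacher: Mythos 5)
Your proof is correct and is essentially the same as the paper's: both take $\bar r$ from \cref{lem:Besicovitch-type-lemma}, run a greedy selection within each layer $\mathcal{B}\cap\mathcal{F}_k$ using the overlap bound $C_{VB}$ from part (i) of that lemma, and conclude with $C_{LF}=C_{VB}+1$. Your graph-theoretic phrasing (bounded-degree graph, independent set of size at least $|\mathcal{B}_k|/(C_{VB}+1)$) is just a cosmetic reformulation of the same greedy argument.
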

\begin{proof}
    Let $\bar r=\bar r(M,g,\lambda)>0$ small enough such that we can apply \cref{lem:Besicovitch-type-lemma}.
    We perform a greedy algorithm on each of the families $\mathcal{B}\cap \mathcal{F}_k$ to obtain a $3\lambda$-disjoint subfamily $\mathcal{B}_k'$ as follows:
    at each step we select a ball $B\in \mathcal{B}\cap \mathcal{F}_k$ to be in the output family $\mathcal{B}_k'$ and we discard all those balls $b\in \mathcal{B}\cap \mathcal{F}_k$ which satisfy $3\lambda b\cap 3\lambda B\neq \emptyset$.
    We stop when every ball has either been marked as discarded or selected, which eventually happens since $\mathcal{B}$ is finite.
    By \cref{lem:Besicovitch-type-lemma}\ref{btl:i} (note that balls in $\mathcal{F}_k$ satisfy the condition on having comparable radii by construction), we know that we are discarding at most $C_{VB}$ of them at each step.
    Thus, we can say that the maximum number of discarded balls is $C_{VB} \abs{\mathcal{B}_k'}$, where $\mathcal{B}_k'$ consists of the selected balls. Therefore
    \[
        \abs{\mathcal{B}\cap \mathcal{F}_k}
            \leq \abs{\mathcal{B}_k'}(C_{VB}+1).
    \]
    So it suffices to take $C_{LF} \eqdef C_{VB}+1$ and set $\mathcal{B}' \eqdef \bigcup_{k\in\N} \mathcal{B}_k'$.
\end{proof}

Finally, we recall a couple of lemmas about layered $3\lambda$-disjoint families of balls, which can be found in \cite{Song2019}*{Facts 1 and 2}, and for which we omit the proof.

\begin{lemma} \label{lem:fact1}
There exists $C_{PL}=C_{PL}(M,g)>0$ such that the following property holds.
Let $\mathcal{B}$ be a layered $3\lambda$-disjoint family of balls and consider $b\in \mathcal{F}_k$ with $k\ge 0$. Then, for all $0\le k'\le k$, the size of 
\[
\{ \hat b\in \mathcal{B}_{k'}\st 6\lambda \hat b\cap 3\lambda b \not=\emptyset \}
\]
is bounded above by $C_{PL}$.
\end{lemma}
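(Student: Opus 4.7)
The plan is to combine the layered $3\lambda$-disjoint hypothesis, which gives disjointness of the dilated balls $3\lambda \hat b$ as $\hat b$ ranges over $\mathcal{B}_{k'}$, with a volume-comparison argument in the spirit of \cref{lem:Besicovitch-type-lemma}. The key observation is that, within the single layer $\mathcal{F}_{k'}$, all radii $\radius(\hat b)$ lie in the dyadic interval $[2^{-(k'+1)},2^{-k'})$ and are therefore mutually comparable up to a factor of $2$; moreover, since $k'\le k$, we have the two-sided comparison $\radius(b)<2^{-k}\le 2^{-k'}\le 2\radius(\hat b)$.

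First I would locate all candidate $\hat b$ inside a single geodesic ball centered at $c_b$. Denoting by $c_b,c_{\hat b}$ the centers of $b,\hat b$, whenever $6\lambda\hat b\cap 3\lambda b\neq\emptyset$ the triangle inequality combined with the comparison above yields
\[
d(c_b,c_{\hat b})\le 3\lambda\radius(b)+6\lambda\radius(\hat b)\le 12\lambda\radius(\hat b),
\]
which then gives the inclusion
\[
3\lambda\hat b\subset B(c_b,\,15\lambda\radius(\hat b))\subset B(c_b,\,15\lambda\cdot 2^{-k'}).
\]

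Second, I would package this with volume comparison. The balls $3\lambda\hat b$ (over the collection we are trying to bound) are pairwise disjoint by the layered $3\lambda$-disjoint hypothesis, each has radius at least $3\lambda\cdot 2^{-(k'+1)}$, and they all lie in a single geodesic ball of radius $\le 15\lambda\cdot 2^{-k'}$. Using the uniform two-sided density bounds $\Haus^{n+1}(B(p,r))\asymp \omega_{n+1}r^{n+1}$ that hold for $r$ below a suitable threshold, exactly as in the proof of \cref{lem:Besicovitch-type-lemma}, a straightforward volume count produces a purely dimensional bound of the form
\[
\bigl|\{\hat b\in \mathcal{B}_{k'}\st 6\lambda \hat b\cap 3\lambda b\neq\emptyset\}\bigr|\le C(n,\lambda),
\]
which we absorb into the constant $C_{PL}=C_{PL}(M,g)$ (with $\lambda$ ultimately being fixed as a function of $(M,g)$ in the later sections of the paper).

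The only genuine subtlety, and the step which could fail if mishandled, is the applicability of the Euclidean-type volume comparison: it only holds at scales bounded by a sufficiently small $\bar r=\bar r(M,g,\lambda)$, so one must ensure that the enclosing ball $B(c_b,15\lambda\cdot 2^{-k'})$ stays in that regime. This is a standard setup issue already solved in \cref{lem:Besicovitch-type-lemma} and will be automatic once the parameters $\bar r,\lambda$ are fixed globally in \cref{sec:choice-param}, at which point the lemma applies to the families of stability balls to which it will be invoked in later sections.
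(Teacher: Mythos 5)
The paper deliberately omits a proof of this lemma, citing Song's Facts 1 and 2 directly, so there is no in-paper argument to compare against; your task here is simply to supply a valid proof, and yours does. The core steps are correct: the $3\lambda$-disjointness within the fixed layer $\mathcal{B}_{k'}$ gives pairwise disjoint dilations $3\lambda\hat b$; the radius comparison $\radius(b)<2\radius(\hat b)$ together with the intersection hypothesis $6\lambda\hat b\cap 3\lambda b\neq\emptyset$ places every $3\lambda\hat b$ inside $B(c_b,15\lambda\cdot 2^{-k'})$; and a disjoint-packing/volume-comparison count then yields a bound depending only on the dimension and $\lambda$. This is exactly the Besicovitch-type argument the paper uses in \cref{lem:Besicovitch-type-lemma}, so the style and ingredients are perfectly in keeping with the surrounding text.

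The one place you should tighten up is the scale issue, which you flag but resolve only by gesturing at the later choice of parameters. As stated, the lemma is about an arbitrary layered $3\lambda$-disjoint subfamily of $\mathcal{F}$, and balls in $\mathcal{F}$ can have radius as large as $1$, so for small $k'$ the enclosing ball $B(c_b,15\lambda\cdot 2^{-k'})$ may be outside the regime where the two-sided density bounds on $\Haus^{n+1}(B(p,r))$ hold. The clean way to close this gap, rather than deferring to the eventual application to stability balls, is to note that the problematic layers $k'$ are those with $2^{-k'}$ above a fixed threshold (depending on $M,g,\lambda$), hence there are only finitely many of them; and in any such layer the $\hat b$'s themselves are pairwise disjoint with radius bounded below by a constant, so their total number in $M$ is bounded by $\Haus^{n+1}(M)$ divided by the infimum of ball volumes at that scale, as in \cref{lem:Besicovitch-type-lemma}\ref{btl:ii}. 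Combining that crude large-scale bound with your volume-comparison bound in the small-scale regime gives the lemma in full generality. Finally, your produced constant genuinely depends on $\lambda$, and $\lambda$ is fixed in \cref{sec:choice-param} as $\lambda(M,g,C_{CE})$ with $C_{CE}$ depending on $\CA$ for $n+1\geq 5$; writing $C_{PL}=C_{PL}(M,g)$ therefore tacitly assumes $\lambda$ has been absorbed, which matches how the paper writes it, but it is worth being aware that the $\CA$-dependence is hiding there in the higher-dimensional cases (it drops out for $n+1=3$, which is exactly what the boxed remarks need).
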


\begin{lemma} \label{lem:fact2}
Given $\bar u\ge 0$, there exists $C_{NL} = C_{NL}(\bar u)= C_{NL}(M,g,\bar u) > 0$ such that the following property holds. Let $\mathcal{B}$ be a layered $3\lambda$-disjoint family of balls and consider $b\in \mathcal{F}_k$ with $k\ge 0$. Then the size of
\[
\{ \hat b\in \mathcal{B}_{k+u}\st 0\le u\le \bar u, \ 3\lambda \hat b\cap 3\lambda b \not=\emptyset \}
\]
is bounded above by $C_{NL}$.
\end{lemma}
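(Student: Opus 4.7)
The plan is to bound $|\mathcal{G}_u|$, where $\mathcal{G}_u \eqdef \{\hat b \in \mathcal{B}_{k+u} \st 3\lambda \hat b \cap 3\lambda b \neq \emptyset\}$, separately for each integer $0 \le u \le \bar u$ via a volume-packing argument, and then sum the resulting estimates, in the same spirit as the proof of \cref{lem:Besicovitch-type-lemma}\ref{btl:i}. Let $x$ be the center of $b$ and $r \eqdef \radius(b) \in [2^{-(k+1)}, 2^{-k})$. For each $\hat b \in \mathcal{G}_u$, with center $\hat x$ and radius $\hat r \in [2^{-(k+u+1)}, 2^{-(k+u)})$, the relations between $r$ and $\hat r$ coming from the layer definitions read
\[
    2^{-u-1} r \,<\, \hat r \,\le\, 2r,
\]
where the upper bound is an equality up to a factor for $u=0$ and gives $\hat r \le r$ for $u \ge 1$. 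Since $3\lambda \hat b \cap 3\lambda b \neq \emptyset$, we have $d(\hat x,x) \le 3\lambda(r+\hat r) \le 9\lambda r$, and therefore $3\lambda \hat b \subset B(x, 9\lambda r + 3\lambda \hat r) \subset B(x, 15\lambda r)$.

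Next, I would use the layered $3\lambda$-disjoint assumption, which ensures that the balls $\{3\lambda \hat b\}_{\hat b \in \mathcal{G}_u}$ are pairwise disjoint within the single layer $\mathcal{B}_{k+u}$. Provided the base radius $\bar r$ has been chosen small enough so that the density quotients $\Theta^M_g(p,\rho)$ of $M$ are pinched between $1/4$ and $2$ on all balls of radius $\le 15\lambda \bar r$ (exactly as in the proof of \cref{lem:Besicovitch-type-lemma}), this packing gives
\[
    |\mathcal{G}_u| \cdot \frac{\omega_{n+1}(3\lambda \hat r_{\min})^{n+1}}{4} \,\le\, \sum_{\hat b \in \mathcal{G}_u} \Haus^{n+1}(3\lambda \hat b) \,\le\, \Haus^{n+1}(B(x,15\lambda r)) \,\le\, 2\omega_{n+1}(15\lambda r)^{n+1},
\]
where $\hat r_{\min} \ge 2^{-u-1} r$. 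Solving for $|\mathcal{G}_u|$ yields $|\mathcal{G}_u| \le C_0 \cdot 2^{u(n+1)}$ for a constant $C_0 = C_0(M,g,\lambda)$.

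Summing the geometric series over $0 \le u \le \bar u$ then gives
\[
    \sum_{u=0}^{\bar u} |\mathcal{G}_u| \,\le\, C_0 \sum_{u=0}^{\bar u} 2^{u(n+1)} \,\le\, C_0 \cdot 2^{(\bar u+1)(n+1)} \,\eqdef\, C_{NL}(M,g,\bar u),
\]
which is the required bound. The only mildly delicate point, and the reason $C_{NL}$ depends on $\bar u$ (in contrast to \cref{lem:fact1}), is precisely this compounding of $2^{u(n+1)}$: as $u$ increases the dilated balls $3\lambda \hat b$ shrink and one can fit many of them inside $B(x,15\lambda r)$, so no universal estimate independent of $\bar u$ is available from the volume argument. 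The choice of $\bar r$ small enough to uniformize density ratios is the only other ingredient, and it is the same choice already recorded in \cref{lem:Besicovitch-type-lemma}, so no new restriction on $\bar r$ is introduced.
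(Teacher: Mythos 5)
Your proof is correct and gives the expected volume-packing argument. The paper omits the proof of this lemma (it cites Song's Facts 1 and 2 and says the proof is omitted), so there is no text to compare against directly, but your scheme — pack the pairwise-disjoint dilations $3\lambda\hat b$ for $\hat b\in\mathcal{B}_{k+u}$ into $B(x,15\lambda r)$ to obtain a per-layer count of order $2^{u(n+1)}$, then sum over $u\le\bar u$ — is surely what is intended, and the dependence of $C_{NL}$ on $\bar u$ arises for exactly the reason you identify. One minor imprecision: the pinching $1/4\le\Theta^M_g(p,\rho)\le 2$ that the proof of \cref{lem:Besicovitch-type-lemma} provides is stated only for $\rho\le 20\lambda\bar r$, whereas $b\in\mathcal{F}_k$ is a priori permitted radius up to $1$, so $15\lambda r$ may exceed that threshold. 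This causes no genuine problem — in every application the balls in $\mathcal{B}$ are stability balls of radius at most $\bar r$, and in any case compactness of $M$ supplies a two-sided volume comparison with $(M,g)$-dependent constants at all scales below the diameter of $M$ — but the estimate you quote is not literally the one the cited lemma makes available for arbitrary $b\in\mathcal{F}_k$, so it is worth either noting the ambient volume bound explicitly or restricting attention to balls of radius at most $\bar r$.
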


\section{Preliminaries on free boundary minimal hypersurfaces} \label{sec:pre-fbms}

In this section, let us assume that $(M^{n+1},g)$ is a compact Riemannian manifold of dimension $n+1\ge 3$ with (possibly empty) boundary. Moreover, let us assume that $\ms^n\subset \amb^{n+1}$ is a smooth compact hypersurface that is \emph{properly embedded} in $M$, namely $\partial\ms=\ms\cap \partial \amb$.

\subsection{First and second variation formulas}

We denote by $\vf(M)$ the linear space of smooth ambient vector fields $X$ such that $X(p)\in T_p\partial M$ for all $p\in\partial M$.
The \emph{first variation} of the area of $\ms$ along a vector field $X\in\vf(M)$ is given by
\[
\frac{\d}{\d t}\Big|_{t=0} \Haus^n(\psi_t(\ms)) = -\int_\ms g(X,H)\de \Haus^n + \int_{\partial\ms} g(X,\eta) \de \Haus^{n-1},
\]
where $\psi\colon[0,\infty)\times M\to M$ is the flow generated by $X$, $H$ is the mean curvature of $\ms$ and $\eta$ is the unit conormal along $\partial\ms$.

\begin{definition}
    We say $\ms^n\subset M^{n+1}$ is a \emph{free boundary minimal hypersurface} if it is a critical point of the area functional with respect to variations induced by $\vf(\amb)$. Thanks to the formula above, this is equivalent (in the properly embedded setting) for $\ms$ to {have vanishing mean curvature and to meet} the ambient boundary orthogonally along its own boundary.
\end{definition}

From now on, let us assume that $\ms^n\subset M^{n+1}$ is a free boundary minimal hypersurface.
The \emph{second variation} of the area of $\ms$ along a vector field $X\in\vf(M)$ is given by 
\begin{align*}
\frac{\d^2}{\d t^2} \Big|_{t=0} \Haus^n(\psi_t(\ms))= Q^\ms(X^\perp,X^\perp) &\eqdef \int_\ms (\abs{\cov^{\perp}X^{\perp}}^2 - (\Ric_{\amb}(X^\perp,X^\perp) + \abs {A^\ms}^2 \abs{X^\perp}^2)) \de \Haus^n  +{} \\
&\phantom{=} {}+\int_{\partial\ms} \II^{\partial\amb}(X^\perp, X^\perp) \de \Haus^{n-1},
\end{align*}
where $X^\perp$ is the normal component of $X$, $\cov^\perp$ is the induced connection on the normal bundle $N\ms$ of $\ms\subset\amb$, $A^\ms$ is the second fundamental for of $\ms\subset M$, $\Ric_\amb$ is the ambient Ricci curvature and $\II^{\partial\amb}$ denotes the second fundamental form of the ambient boundary.
Here, again, $\psi\colon[0,\infty)\times M\to M$ is the flow generated by $X$.

\begin{definition}
    We define the \emph{Morse index} $\ind(\ms)$ of $\ms$ as the maximal dimension of the sections $\Gamma(N\ms)$ of the normal bundle of $\ms$ where $Q^\ms$ is negative definite.
\end{definition}

Under the above assumptions, the Morse index of $\ms$ is a finite number equal to the number of negative eigenvalues of the following elliptic problem on $\Gamma(N\ms)$:
\[
\begin{cases}
\lapl_\ms^\perp Y + \Ric^\perp_\amb(Y,\cdot) +\abs {A^\ms}^2 Y + \lambda Y = 0 & \text{on $\ms$}\comma\\
\cov^\perp_\eta Y = - (\II^{\partial\amb}(Y,\cdot))^\sharp & \text{on $\partial\ms$}\point
\end{cases}
\]

\begin{definition}
We say that $\ms^n\subset M^{n+1}$ is \emph{stable} in an open subset $U\subset M$ if $Q^\ms(Y,Y)\ge 0$ for all $Y\in \Gamma(N\ms)$ compactly supported in $U$.
In particular, $\ms$ is stable in $M$ if and only if $\ind(\ms)=0$ 
\end{definition}

\subsection{Monotonicity formula and a density estimate}

In this section, we recall the monotonicity formula for free boundary minimal hypersurfaces. 

\begin{theorem} [{Monotonicity formula, cf. \cite{Simon1983}*{Section 17} and \cite{GuangLiZhou2020}*{Theorem~3.4}}] \label{thm:monotonicity}
Let $\ms^n\subset M^{n+1}$ be a free boundary minimal hypersurface. Then there exist constants $C_{MF} = C_{MF}(M,g)>0$ and $r_{MF}=r_{MF}(M,g) >0$ such that
\[
r\mapsto e^{C_{MF}r}\Theta^\Sigma_g(x,r)
\]
is nondecreasing for $0<r\le r_{MF}$ if $x\in\partial \Sigma$ and for $0<r\le \min\{r_{MF},d(x,\partial \Sigma)\}$ if $x\in\Sigma\setminus\partial \Sigma$. Here, $\Theta_g^\Sigma(x,r)$ denotes the density of $\Sigma$ centered at $x$ at radius $r$, namely
\[
\Theta^\Sigma_g(x,r)\eqdef \frac{\Haus^n(\ms\cap B(x,r))}{\omega_n r^n}.
\]

Moreover, if $M^{n+1}$ is $\R^{n+1}$ or $\R^{n+1}_+$ (with the Euclidean metric) and $\Theta^\Sigma_{\mathrm{Eucl}}(x,r) = \Theta^\Sigma_{\mathrm{Eucl}}(x,s)$ for some 
$x\in M$ and $0<s<r$, then $\Sigma$ is a free boundary minimal cone tipped at $x$.
\end{theorem}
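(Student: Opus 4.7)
The plan is to derive the monotonicity by testing the first variation formula against a suitable radial vector field cut off at radius $r$, treating interior and boundary points separately. For an interior point $x\in\ms\setminus\partial\ms$ with $r<\min\{r_{MF},\dist(x,\partial\ms)\}$, I would take a classical test field of the form $X_\phi(y)=\phi(\dist(x,y)/r)\,\cov\tfrac12\dist(x,\cdot)^2(y)$, where $\phi$ is a smooth nonincreasing cutoff approximating the indicator of $[0,1)$. Plugging into the first variation (using $H\equiv 0$ and the disjointness $B(x,r)\cap\partial\ms=\emptyset$ to kill the conormal term) and using geodesic comparison to replace the Hessian of $\dist(x,\cdot)^2/2$ by the Euclidean one up to an $O(\dist^2)$ curvature error, one obtains after letting $\phi$ collapse an inequality of the form
\[
\frac{d}{dr}\Bigl(\frac{\Haus^n(\ms\cap B(x,r))}{\omega_n r^n}\Bigr)\ \geq\ \frac{1}{\omega_n r^n}\int_{\ms\cap B(x,r)}\frac{\abs{(y-x)^\perp}^2}{\dist(x,y)^{n+2}}\de\Haus^n\ -\ C\,\Theta^\ms_g(x,r),
\]
with $C=C(\amb,g)$, and absorbing the error via the integrating factor $e^{C_{MF}r}$ gives the interior statement.

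For a boundary point $x\in\partial\ms\subset\partial\amb$, the previous test field need not be tangent to $\partial\amb$, so it cannot be fed directly into the first variation. I would handle this by a doubling argument: form the reflected double $\tilde\amb$ of $\amb$ across $\partial\amb$ with its natural $C^{1,1}$ doubled metric, and set $\tilde\ms\eqdef\ms\cup\ms\subset\tilde\amb$. The orthogonality condition $\ms\perp\partial\amb$ is exactly what ensures that $\tilde\ms$ is a $C^{1,1}$ hypersurface with vanishing (weak) mean curvature across the symmetry locus. Applying the interior estimate to $\tilde\ms$ at the now-interior point $x\in\partial\amb\subset\tilde\amb$, and using that $(n+1)$- and $n$-dimensional volumes in $\tilde\amb$ are exactly twice those in $\amb$, yields the weighted monotonicity at boundary points with a possibly enlarged $C_{MF}$. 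A self-contained alternative is a direct first-variation computation in Fermi coordinates at $x$ with a radial field projected tangentially to $\partial\amb$; this produces extra error terms of order $r$ that can be absorbed into the same exponential factor.

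Finally, in the Euclidean and half-space cases the curvature error vanishes identically, so the monotone quantity coincides with $\Theta^\ms_{\mathrm{Eucl}}(x,\cdot)$ itself. The equality $\Theta^\ms_{\mathrm{Eucl}}(x,s)=\Theta^\ms_{\mathrm{Eucl}}(x,r)$ with $0<s<r$ forces the nonnegative integrand $\abs{(y-x)^\perp}^2/\dist(x,y)^{n+2}$ to vanish on $\ms\cap(B(x,r)\setminus B(x,s))$, so $y-x\in T_y\ms$ throughout this annulus; this makes $\ms$ invariant under the dilation flow centered at $x$, hence a minimal cone in the annulus, and analytic continuation extends the conical structure to all of $\ms\cap B(x,r)$ (with the cone meeting $\partial\R^{n+1}_+$ orthogonally when $x\in\partial\R^{n+1}_+$). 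I expect the main obstacle to be the clean handling of the boundary case: the doubling route sidesteps the construction of an admissible $\partial\amb$-tangent radial field, but one must still justify that $\tilde\ms$ is genuinely minimal in $\tilde\amb$ across $\partial\amb$ despite the limited regularity of the doubled metric, which is standard once the free boundary orthogonality is in hand.
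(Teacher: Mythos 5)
The paper does not prove this theorem; it is stated with a citation to Simon's notes and to Guang--Li--Zhou, so there is no internal proof to compare against. Your interior argument and your rigidity argument in the Euclidean and half-space cases follow the standard lines and are fine.

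The boundary case as you have written it contains a genuine gap. You assert that the reflected double $\tilde M$ carries a ``natural $C^{1,1}$ doubled metric.'' That is false in general: in Fermi coordinates, $g = dt^2 + g_t$ with $g_t$ a family of metrics on $\partial M$, so the doubled metric is $dt^2 + g_{|t|}$, whose $t$-derivative jumps across $\{t=0\}$ whenever $\partial_t g_t|_{t=0} \ne 0$, i.e.\ whenever $\partial M$ is not totally geodesic. The doubled metric is therefore only Lipschitz ($C^{0,1}$), not even $C^1$. This is not a cosmetic issue for your scheme: the interior argument you set up replaces the Hessian of $\tfrac12 d(x,\cdot)^2$ by the Euclidean one up to an $O(d^2)$ error governed by a two-sided bound on ambient sectional curvature, and the sectional curvature of a merely Lipschitz metric is undefined (at best distributional). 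So ``applying the interior estimate to $\tilde\Sigma$'' at the now-interior point $x\in\partial M$ is not justified, and the free boundary orthogonality --- which only controls the regularity of the \emph{hypersurface} $\tilde\Sigma$ --- does not repair the ambient metric regularity. The correct route, which you gesture at in your last sentence as the ``self-contained alternative'' but should have been your primary argument, is to reflect inside a fixed Fermi chart mapping $B(x,\rho)\cap M$ to a Euclidean half-ball with $\partial M \mapsto \{x^{n+1}=0\}$, and to test the first variation against the \emph{Euclidean} radial field cut off at radius $r$, which is tangent to $\{x^{n+1}=0\}$ and symmetric under the reflection, so that the test field is admissible and all error terms come from the $C^2$-deviation of the smooth pulled-back metric $g$ from the Euclidean one on the half-ball --- not from any curvature bound on the Lipschitz doubled metric. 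Absorbing these controlled errors into the exponential weight gives the stated monotonicity with $C_{MF} = C_{MF}(M,g)$. You should develop this version in full and drop the manifold-level doubling (or at minimum correct the regularity claim and explain why a Lipschitz ambient metric suffices, which would require a different and more delicate comparison argument than the one you sketch).
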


Let us also recall a consequence of the monotonicity formula (see also \cite{AmbrozioCarlottoSharp2018Compactness}*{Corollary~16}), which will be useful in what follows.
\begin{corollary}\label{cor:density-estimate}
    Let $\ms^n\subset M^{n+1}$ be a free boundary minimal hypersurface. Then, there exist constants $r_{DE} = r_{DE}(M,g)>0$ and $C_{DE} = C_{DE}(M,g)>0$ such that, for every $x\in\ms$ and $0<r\leq r_{DE}$, it holds
    \[
        C_{DE}^{-1}
            \le \frac{\Haus^n(\ms\cap B(x,r))}{
                \omega_n r^n}
            \le C_{DE} \Haus^n(\ms).
    \]
\end{corollary}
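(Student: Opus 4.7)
The plan is to deduce both inequalities directly from the monotonicity formula \cref{thm:monotonicity}, with the choice $r_{DE}\eqdef r_{MF}/2$ so that one can always travel between the scale $r\le r_{DE}$ and the maximal scale $r_{MF}$. The lower bound follows by running the nondecreasing quantity $r\mapsto e^{C_{MF} r}\Theta^\Sigma_g(x,r)$ downward from $r$ to $0$, using that $\lim_{s\to 0^+}\Theta^\Sigma_g(x,s)$ equals $1$ at interior points of $\Sigma$ and $1/2$ at points of $\partial\Sigma$ (since $\Sigma$ is smooth and meets $\partial M$ orthogonally, so the tangent object at $x$ is a full hyperplane or a half-hyperplane). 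The upper bound follows by running the same quantity upward from $r$ to $r_{MF}$ and using $\Haus^n(\Sigma\cap B(\,\cdot\,,r_{MF}))\le \Haus^n(\Sigma)$.

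First I would dispatch the two easy cases: $x\in\partial\Sigma$, where monotonicity is valid on all of $(0,r_{MF}]$, and $x\in\Sigma\setminus\partial\Sigma$ with $d(x,\partial\Sigma)\ge r_{MF}/2$, where interior monotonicity is valid on $(0,r_{MF}/2]$. In both situations the above strategy produces a two-sided bound
\[
\tfrac{1}{2}e^{-C_{MF} r_{MF}}
    \le \Theta^\Sigma_g(x,r)
    \le \frac{e^{C_{MF} r_{MF}}}{\omega_n(r_{MF}/2)^n}\,\Haus^n(\Sigma),
\]
which gives the desired inequalities after absorbing all the geometric constants into $C_{DE}$.

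The main obstacle is the remaining case, $x\in\Sigma\setminus\partial\Sigma$ with $\rho\eqdef d(x,\partial\Sigma)<r_{MF}/2$, in which the interior monotonicity at $x$ is legitimate only up to radius $\rho$, which may be much smaller than $r$. I would handle this by pivoting to a closest boundary point $x'\in\partial\Sigma$, $d(x,x')=\rho$, and exploiting the inclusions $B(x,s)\subset B(x',s+\rho)$ and $B(x',s)\subset B(x,s+\rho)$ to transfer estimates between $x$ and $x'$. For the upper bound: if $r\ge\rho$ then $B(x,r)\subset B(x',2r)$ and boundary monotonicity at $x'$ at radius $2r\le r_{MF}$ does it; if $r<\rho$, interior monotonicity at $x$ on $[r,\rho]$ composes with the $r=\rho$ case just established. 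For the lower bound: if $\rho\le r/2$, then $B(x',r/2)\subset B(x,r)$ and the boundary lower bound at $x'$ at radius $r/2$ suffices; otherwise $\rho>r/2$, and interior monotonicity at $x$ on $(0,\min(r,\rho)]$ is legitimate and, since $\min(r,\rho)\ge r/2$, costs us at most a factor of $2^n$ relative to the naive estimate. Collecting all cases and absorbing the geometric constants into a single $C_{DE}=C_{DE}(M,g)$ closes the argument; I do not expect any step to require ideas beyond the monotonicity formula and this ``pivot to the free boundary'' trick.
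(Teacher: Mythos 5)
Your proposal is correct and follows essentially the same strategy as the paper's proof: run the corrected density $e^{C_{MF}r}\Theta^\ms_g(x,r)$ downward toward $0$ for the lower bound and upward toward the maximal scale for the upper bound, and in the interior-point-near-$\partial\ms$ case pivot to a nearest boundary point via inclusions of concentric balls, which is exactly the paper's argument with $\bar x$ and $B(\bar x, r/2) \subset B(x,r)$ resp.\ $B(\bar x,3r)\supseteq B(x,r)$. Your case split (distance to $\partial\ms$ versus $r_{MF}/2$, then versus $r/2$ or $r$) and the choice $r_{DE}=r_{MF}/2$ differ from the paper's ($B(x,r)\cap\partial\ms$ empty or not, then distance versus $r/2$, with $r_{DE}\le r_{MF}/3$) only in the numerical thresholds, producing the same bound up to the explicit constant.
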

\begin{proof}
    The proof is essentially a corollary of the monotonicity formula \cref{thm:monotonicity}. Observe that the behaviour of the ``corrected'' density
    \[
        e^{C_{MF} r}\frac{\Haus^n(\ms\cap B(x,r))}{\omega_n r^n}
    \]
   as $r\to 0$ changes depending on whether $x\in\ms$ is an interior or a boundary point: it approaches $1$ or $1/2$, respectively.
    We let $0< r \leq  r_{DE} \leq \frac{1}{3}r_{MF}$ for the rest of the proof.
    
    For the lower bound, note that, if $B(x,r)\cap\partial\ms$ is empty then we can use the interior monotonicity formula to get $  e^{-C_{MF} r} > e^{-C_{MF} r_{DE}}  $ as lower density bound. Similarly, if $x\in\partial\ms$ we can use the boundary monotonicity formula to get a lower density bound of $ e^{-C_{MF} r_{DE}}/2$. So we just need to study the case when $x\in\ms\setminus\partial\ms$ but $B(x,r)\cap \partial\ms\not=\emptyset$.
        
    In this case, depending on whether $d(x,\partial\ms)$ is less than or greater or equal to $r/2$ we have that either the ball $B(x,r/2)$ does not intersect the boundary or the ball $B(\bar{x},r/2)$ is contained in $B(x,r)$, where $\bar{x}$ is a nearest point to $x$ in $\partial \ms$, that is, $\bar{x}\in\partial\ms$ achieves $d(x,\partial\ms)$.
    Thus, by running the interior or the boundary monotonicity formulas in the corresponding smaller ball, we get $e^{-C_{MF} r_{DE}}2^{-(n+1)}$ as lower bound. 
        
    For the upper bound note that, if $x\in \partial\ms$ it suffices to use the boundary monotonicity formula up to $r=r_{DE}$ so that we get
        \[
            \frac{\Haus^n(\ms\cap B(x,r))}{\omega_n r^n}
                \leq \frac{e^{C_{MF}r_{DE}}}{\omega_n r_{DE}^n} \Haus^n(\ms)
                .
        \]
        
        If $x\in \ms\setminus\partial\ms$ then we can reduce to the case where $r\geq d(x,\partial\ms)/2$ simply by running the interior monotonicity formula if $r<d(x,\partial\ms)/2$. In this situation we have that $B(\bar{x},3r)\supseteq B(x,r)$, where $\bar x\in\partial\ms$ is a point achieving $d(x,\partial\ms)$.
        Therefore, using the boundary monotonicity formula we get
        \[
           \frac{\Haus^n(\ms\cap B(x,r))}{\omega_n r^n}
                \leq \frac{e^{C_{MF}3r_{DE}}}{\omega_n r_{DE}^n} \Haus^n(\ms). 
        \]
        So it is clear how big $C_{DE}$ has to be. Notice that this, and the choice of $r_{DE}$, only depend on $C_{MF}$ and $r_{MF}$, so the estimate is proven.
\end{proof}
\begin{remark}\label{rem:how-to-use-density-estimate}
    We are also interested in applying the \emph{lower bound} of this density estimate for hypersurfaces of the type $S = \ms\cap B(p,s)$, for some $p\in M$, $s>0$.
    In this case, we need to make sure that $B(x,r)\subset B(p,s)$, in which case we can apply the above proof with no changes replacing $\ms$ with $S$.
\end{remark}

\subsection{Curvature estimates}

We finally recall the curvature estimates for stable free boundary minimal hypersurfaces in ambient manifolds of dimension $3\le n+1\le 7$.

\begin{theorem}[{Curvature estimates}] \label{thm:curvature-estimates}
Assume that the dimension of the ambient manifold $\amb^{n+1}$ is $3\le n+1\le 7$, and let $\CA>0$ be a constant. 

Then, there exists a constant $C_{CE} = C_{CE}(M,g,\CA) > 0$ so that, for every free boundary minimal hypersurface $\ms^n\subset M$ that is stable in $B(p,r)$, for some $p\in M$ and $0<r<\conv_M$, and with $\Haus^n(\ms)\leq \CA$, it holds that
    \[
		\sup_{x\in\ms\cap B(p,r/2)} \abs{A^\ms}(x)\leq \frac{C_{CE}}{r}.
	\]
	Moreover, if $n=2,3$ the constant $C_{CE}=C_{CE}(M,g)$ does not depend on $\CA$.
\end{theorem}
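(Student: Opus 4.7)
The plan is to obtain this estimate by combining the classical interior curvature estimates for stable minimal hypersurfaces (available precisely in the dimension range $3 \le n+1 \le 7$) with their free boundary analogues, via a case distinction depending on whether a given point in $\ms \cap B(p, r/2)$ is close to the ambient boundary.

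Fix $x \in \ms \cap B(p, r/2)$ and distinguish two cases. If $\dist(x, \partial M) \ge r/4$, then $B(x, r/4) \subset B(p, r)$ lies in the interior of $M$, $\ms$ is stable on it, and the Schoen--Simon curvature estimates (or Schoen's estimate when $n=2$) yield $\abs{A^\ms}(x) \le C/r$, with $C$ depending on $(M,g)$ and, for $n\ge 4$, on the area bound $\Haus^n(\ms) \le \CA$ through the monotonicity-based control of area on the rescaled ball. If instead $\dist(x, \partial M) < r/4$, let $q \in \partial M$ be a nearest boundary point to $x$. Then $B(q, r/2) \subset B(p, r)$, so $\ms$ is a stable free boundary minimal hypersurface in $B(q, r/2)$, and the boundary curvature estimates of \cite{GuangLiZhou2020} (for $n=2$) and \cite{GuangWangZhou2021} (for $n\ge 3$) yield $\abs{A^\ms}(x) \le C/r$ with the same dependencies.

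The refined statement for $n=2,3$ follows because in those dimensions the relevant stability-based curvature estimates are area-independent: for $n=2$, this is Schoen's classical bound for stable minimal surfaces in $3$-manifolds together with its free boundary counterpart, and for $n=3$ analogous area-independent results are available at both the interior and the free boundary. Consequently, the constant depends only on $(M,g)$ in these cases.

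The main obstacle here is not technical but bibliographical: we are effectively packaging several curvature estimates from the literature into a single uniform statement compatible with the free boundary setting. The subtle points are to verify that the stability hypothesis on $B(p,r)$ transfers correctly to the smaller ball used in each case (interior or boundary), and that the scaling comes out cleanly as $1/r$; since $r < \conv_M$ is a priori bounded above by a constant depending only on $(M,g)$, the standard normalization via rescaling applies without complications, and the final constant $C_{CE}$ absorbs both the ambient geometry and, when $n\ge 4$, the area bound $\CA$.
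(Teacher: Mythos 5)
Your plan --- splitting into an interior case and a boundary case according to $\dist(x,\partial M)$, then invoking the corresponding Schoen--Simon type estimate --- is sound in spirit and gives the same scaling, but the paper takes a more direct route: it simply observes that $\ms\cap B(p,r)$ is two-sided (since $r<\conv_M$ makes $B(p,r)$ simply connected by \cref{rem:convex-trivial}), and then cites a single unified theorem, \cite{GuangLiZhou2020}*{Theorem~1.1}, which already treats interior and boundary points simultaneously for stable free boundary minimal hypersurfaces. Your case analysis is essentially an unpacking of what that reference does internally.

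The one genuine gap in your argument is the two-sidedness hypothesis: all of the stability-based curvature estimates you invoke are stated for two-sided hypersurfaces, and it is not automatic that $\ms\cap B(p,r)$ is two-sided. The observation that $r<\conv_M$ forces $B(p,r)$ to be simply connected, hence $\ms\cap B(p,r)$ to be two-sided, must be supplied before any of the cited estimates can be applied; the paper records this explicitly and you should too.

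A lesser issue is that your specific radii do not quite close. With $x\in B(p,r/2)$ and $\dist(x,\partial M)<r/4$, the nearest boundary point $q$ satisfies only $d(p,q)<3r/4$, so $B(q,r/2)\not\subset B(p,r)$; moreover $x$ need not lie in the inner half of the ball around $q$ where the boundary estimate actually produces a $C/\radius$ bound. Changing the dichotomy threshold from $r/4$ to, say, $r/8$ (and using $B(q,r/4)$ in the boundary case, so that $x\in B(q,r/8)$) fixes both. This is cosmetic, but worth getting right since the whole point is a clean $1/r$ scaling.
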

\begin{proof}
First note that, since $0<r<\conv_M$, $B(p,r)$ is simply connected and thus $\ms\cap B(p,r)$ is two-sided (cf. \cite{Samelson1969},  \cite{ChodoshKetoverMaximo2017}*{Appendix C}). Hence, the result follows from \cite{GuangLiZhou2020}*{Theorem~1.1}.
If $n=2$, the fact that $C_{CE}$ does not depend on the area bound $\CA$ is proved in \cite{GuangLiZhou2020}*{Theorem~1.2}, using \cite{Schoen1983Estimates}*{Theorem~3}. One can similarly prove that the constant $C_{CE}$ does not depend on $\CA$ for $n=3$ using \cite{ChodoshLi2021}*{Theorem~3}.
\end{proof}

\section{Stability balls} \label{sec:stability}

In this section, let us assume again that $(M^{n+1},g)$ is a compact Riemannian manifold of dimension $3\le n+1\le 7$ with (possibly empty) boundary. Moreover, let us assume that $\ms^n\subset \amb$ is a smooth, compact, properly embedded, free boundary minimal hypersurface.

\subsection{Stability radius}
Here we recall the definition of stability radius with a couple of properties.

\begin{definition}
\label{def:stability-radius}
	Let $\lambda\ge 2$ and $\bar{r} >0$ be given parameters.
	For all $p\in M$, we define the \emph{stability radius} associated to $\ms$ at $p$ as 
	\[
    	s_\ms(p) = s_\ms(p,\lambda,\bar{r})
    	:= \sup\{ r\leq\bar{r} \st  \ms \text{ is stable in }B(p,\lambda r)\},
	\]
	where, by convention, we assume that $\ms$ is stable in $B(p,\lambda r)$ in the case $\ms\cap B(p,\lambda r)= \emptyset$.
  We say that $B\subset M$ is a \emph{stability ball} if $B=B(p,s_\ms(p))$ for some $p\in M$.
\end{definition}

\begin{lemma} \label{lem:stab-continuous-positive}
    The stability radius is $(1/\lambda)$-Lipschitz continuous and is strictly positive at every point. In particular, $s_\ms$ attains a strictly positive minimum on $M$.
\end{lemma}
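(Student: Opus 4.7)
The plan is to establish the three assertions in turn, with the last following from the first two by compactness of $M$.

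First, for the $(1/\lambda)$-Lipschitz bound, I would observe that the supremum in \cref{def:stability-radius} is attained: if $r_n\uparrow s_\ms(p)$ with $\ms$ stable in each $B(p,\lambda r_n)$, then any normal section $Y$ compactly supported in $B(p,\lambda s_\ms(p))$ is automatically supported in $B(p,\lambda r_n)$ for all $n$ large enough, so the inequality $Q^\ms(Y,Y)\ge 0$ passes to the limit. Then, given $p,q\in M$ with $\delta=d(p,q)$ and any $0<r\le\min\{\bar r,\,s_\ms(q)-\delta/\lambda\}$, the triangle inequality gives
\[
B(p,\lambda r)\subset B(q,\lambda r+\delta)\subset B(q,\lambda s_\ms(q)),
\]
so $\ms$ is stable also in $B(p,\lambda r)$, showing $s_\ms(p)\ge r$. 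Letting $r\uparrow s_\ms(q)-\delta/\lambda$ (noting that if this quantity exceeds $\bar r$ then the constraint $r\le\bar r$ is the binding one and the bound still follows) and exchanging the roles of $p$ and $q$ yields $|s_\ms(p)-s_\ms(q)|\le\delta/\lambda$.

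For strict positivity at a point $p\in M$, I would distinguish two cases. If $p\notin\ms$, then $d(p,\ms)>0$ and for every $0<r\le\min\{\bar r,\,d(p,\ms)/\lambda\}$ the ball $B(p,\lambda r)$ does not meet $\ms$, so $\ms$ is stable there by convention. If $p\in\ms$, the compactness of $M$ and $\ms$ furnishes a uniform constant $K$ with $|A^\ms|^2+|\Ric_M|\le K$ on $\ms$ and $|\II^{\partial M}|\le K$ on $\partial M$, so that for any normal section $Y$
\[
Q^\ms(Y,Y)\ge \int_\ms |\cov^\perp Y|^2-K\int_\ms |Y|^2-K\int_{\partial\ms}|Y|^2.
\]
For $Y$ compactly supported in $\ms\cap B(p,\lambda r)$, a Poincar\'e inequality together with a trace inequality on the small domain $\ms\cap B(p,\lambda r)$ (equivalently, the fact that the first Dirichlet--Robin eigenvalue of the Jacobi operator on this domain tends to $+\infty$ as $r\downarrow 0$) absorbs both negative terms into the gradient term for $r$ sufficiently small, thereby proving $\ms$ stable in $B(p,\lambda r)$ and hence $s_\ms(p)>0$.

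Finally, Lipschitz continuity makes $s_\ms$ continuous on the compact manifold $M$, so strict positivity at every point implies that $s_\ms$ attains a strictly positive minimum. The main technical obstacle is the handling of the boundary term $\int_{\partial\ms}|Y|^2$ for $p\in\partial\ms$: this demands a trace inequality with constants that stay bounded as the domain shrinks, which is most cleanly arranged in local charts that flatten $\partial M$ near~$p$.
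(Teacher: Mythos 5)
Your proof is correct and follows essentially the same route as the paper. For the Lipschitz bound, the paper uses the same containment argument $B(q,\lambda s - d(p,q))\subset B(p,\lambda s)$, but takes the supremum directly over radii $s$ for which $\ms$ is stable in $B(p,\lambda s)$, so it never needs your preliminary observation that the supremum in \cref{def:stability-radius} is attained (which is harmless but unnecessary; also note $s_\ms(q)-\delta/\lambda\le\bar r$ automatically since $s_\ms(q)\le\bar r$, so your parenthetical about the constraint being binding never arises). For strict positivity the paper simply defers to \cite{Song2019}*{Lemma~20} as ``standard arguments,'' and the Poincar\'e/trace absorption you spell out — with the easy case $p\notin\ms$ handled by the convention that empty intersections are stable — is exactly that standard argument; the scaling structure you describe (the $L^2$ constant scaling like $r^2$, the boundary trace constant like $r$, both controlled in Fermi-type charts near $\partial M$) is what makes it go through.
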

\begin{proof}
    Assume $\ms\cap B(p,\lambda s)$ is stable, then $\ms \cap B(q,\lambda s- \dist(p,q))$ is also stable for all $q\in \amb$, since it is contained in $\ms \cap B(p,\lambda s)$ and stability is a property that is inherited by open subregions. Therefore, we have that $s_\ms(q) \geq s- \frac{\dist(p,q)}{\lambda}$. Thus, taking the supremum on $s\leq \bar{r}$ so that $\ms \cap B(p,\lambda s)$ is stable, we see that
    \[
        s_\ms(p) - s_\ms(q) \leq \frac{1}{\lambda} \dist(p,q).
    \]
    By symmetry, we get that $s_\ms$ is $(1/\lambda)$-Lipschitz.
    
    The fact that $s_\ms$ is strictly positive at every point also follows from standard arguments (see e.g. \cite{Song2019}*{Lemma 20}). Therefore, by continuity and compactness of $M$, $s_\ms$ attains a strictly positive minimum.
\end{proof}

{
\begin{lemma} \label{cor:BetterBalls}
Assume that $\lambda\ge 3$, then there exists $\bar r = \bar r(M,g)>0$ small enough in \cref{def:stability-radius} such that every point in $M$ is contained either in a stability ball $B(p,s_\ms(p))$ that is at positive distance from $\partial M$ or in a stability ball $B(p,s_\ms(p))$ centered at a point $p\in\partial M$.
\end{lemma}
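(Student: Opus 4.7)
The plan is to distinguish cases according to the relative sizes of $L\eqdef d(q,\partial M)$ and $s_\ms(q)$, using the $(1/\lambda)$-Lipschitz continuity of $s_\ms$ from \cref{lem:stab-continuous-positive}. First I would choose $\bar r=\bar r(M,g)>0$ small enough that the tubular neighborhood $\{q\in M\st d(q,\partial M)\le 3\bar r\}$ is covered by a Fermi coordinate chart: every such $q$ has a unique nearest boundary point $\bar q\in\partial M$, realised by a minimising geodesic $\gamma\colon[0,3\bar r]\to M$ with $\gamma(0)=\bar q$ normal to $\partial M$, $\gamma(L)=q$, and $d(\gamma(t),\partial M)=t$ for all $t\in[0,3\bar r]$. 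Since $s_\ms(q)\le \bar r$ always, any $q$ with $L>2\bar r$ already admits the trivial choice $p=q$.

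For $L\le 2\bar r$ I would split into three subcases. When $s_\ms(q)<L$, again $p=q$ works since $B(q,s_\ms(q))$ has positive distance $L-s_\ms(q)$ from $\partial M$. When $s_\ms(q)>\tfrac{\lambda+1}{\lambda}L$, the choice $p=\bar q$ works: Lipschitz gives $s_\ms(\bar q)\ge s_\ms(q)-L/\lambda>L=d(q,\bar q)$, so $q\in B(\bar q,s_\ms(\bar q))$. The delicate intermediate case is $L\le s_\ms(q)\le \tfrac{\lambda+1}{\lambda}L$, in which both $p=q$ and $p=\bar q$ may fail because the Lipschitz estimate then only yields non-strict inequalities. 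Here I would set $p\eqdef \gamma(L+t)$ for a suitable $t>0$, so that in the Fermi chart $d(p,q)=t$ and $d(p,\partial M)=L+t$, and exploit the two-sided Lipschitz bound $|s_\ms(p)-s_\ms(q)|\le t/\lambda$.

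A direct computation shows that the requirements $d(p,q)<s_\ms(p)$ (so that $q\in B(p,s_\ms(p))$) and $s_\ms(p)<d(p,\partial M)$ (so that the ball is at positive distance from $\partial M$) translate into
\[
\frac{\lambda}{\lambda-1}(s_\ms(q)-L)<t<\frac{\lambda}{\lambda+1}s_\ms(q).
\]
This open interval is nonempty if and only if $2\,s_\ms(q)<(\lambda+1)L$, which in our intermediate case $s_\ms(q)\le \tfrac{\lambda+1}{\lambda}L$ reduces to $\lambda>2$; in particular $\lambda\ge 3$ is sufficient. Since any admissible $t$ is at most $\bar r$, the point $p=\gamma(L+t)$ still lies inside the Fermi neighborhood, closing the argument.

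The main obstacle is exactly this intermediate case: a clean two-case dichotomy between $p=q$ and $p=\bar q$ cannot close because of the narrow window $L\le s_\ms(q)\le \tfrac{\lambda+1}{\lambda}L$ in which the Lipschitz estimate yields only non-strict bounds. Translating $q$ along the inward normal geodesic to a well-chosen interior point resolves it, and the hypothesis $\lambda\ge 3$ is precisely what guarantees that the two Lipschitz-derived constraints on the shift $t$ are simultaneously satisfiable.
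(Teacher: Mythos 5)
Your proof is correct and follows essentially the same route as the paper: you make the same case split on the relative size of $s_\ms(q)$ and $L=d(q,\partial M)$ (trivial case $s_\ms(q)<L$, boundary-foot case $s_\ms(q)>\tfrac{\lambda+1}{\lambda}L$, and the intermediate window), and both proofs resolve the intermediate window by sliding $q$ inward along the normal geodesic from its nearest boundary point, using the $(1/\lambda)$-Lipschitz estimate and the hypothesis $\lambda\ge 3$. The paper handles the final step with a dichotomy at the shifted point $p_2$ (either $B(p_2,s_\ms(p_2))$ is interior, or the Lipschitz bound forces $q\in B(p_1,s_\ms(p_1))$ with $p_1\in\partial M$), whereas you show via the interval
\[
\frac{\lambda}{\lambda-1}\bigl(s_\ms(q)-L\bigr)<t<\frac{\lambda}{\lambda+1}\,s_\ms(q)
\]
that one can always force the interior alternative in that window. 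This is a mild refinement in presentation — one terminal case instead of two — but the underlying geometry, the Lipschitz ingredient, and the role of $\lambda\ge 3$ are identical.
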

\begin{proof}
Fix any $q\in M$. If $q\in\partial M$ or $B(q,s_\ms(q))$ is at positive distance from $\partial M$, then we are done. Therefore, assume that neither of the previous two cases holds. 
First observe that, for all $q'\in B(q,\mu s_\ms(q))$ where $\mu\eqdef (1+\lambda^{-1})^{-1}=\lambda/(\lambda+1)$, by the previous lemma it holds that
\[
s_\ms(q') \ge s_\ms(q) - \frac1\lambda d(q,q') > \left(1+\frac 1\lambda\right)d(q,q') - \frac1\lambda d(q,q') \ge d(q,q').
\]
Therefore, we get that $q\in B(q',s_\ms(q'))$.

Let $p_1$ be the point on $\partial M$ realizing the distance $d(q,\partial M)$, i.e. $d(q,p_1) = d(q,\partial M)\le s_\ms(q)$. If $d(p_1,q)< \mu s_\ms(q)$, then we are done by the previous observation. Therefore, assume that $d(p_1,q)\ge \mu s_\ms(q)$.
Now, taking $\bar r=\bar r(M,g)>0$ sufficiently small, we can find a point $p_2\in B(q,\mu s_\ms(q))$ (e.g. moving along the geodesic emanating from $p_1$ and going through $q$) such that 
\[
d(p_2,\partial M) = d(p_2,p_1) = d(p_2,q) + d(q,p_1) > \left(\frac{1}{\lambda-1} + 1\right)d(p_1,q) = \frac{\lambda}{\lambda-1}d(p_1,q).
\]
Note that this can easily be done for any $\lambda\ge 3$, because $d(p_1,q)\le s_\ms(q)< (\lambda-1)\mu s_\ms(q)$ and $p_2$ can be chosen such that $d(p_2,q)$ is arbitrarily close to $\mu s_\ms(q)$. At this point, we have
\begin{itemize}[wide]
    \item either $B(p_2,s_\ms(p_2))$ is at positive distance from $\partial M$, and contains $q$;
    \item or $d(p_1,p_2) \le s_\ms(p_2)$, in which case we have
    \begin{align*}
    s_\ms(p_1) &\ge s_\ms(p_2) -\frac 1\lambda d(p_1,p_2) \ge\left(1-\frac1\lambda\right) d(p_1,p_2)> \left(1-\frac1\lambda\right) \frac{\lambda}{\lambda-1} d(p_1,q) = d(p_1,q),
    \end{align*}
    which proves that $q\in B(p_1,s_\ms(p_1))$ and concludes the proof since $p_1\in \partial M$.\qedhere
\end{itemize}
\end{proof}

\begin{definition} \label{def:GoodBalls}
We denote by $\mathcal{B}_\ms$ the set of the stability balls $B(p,s_\ms(p))$ for $p\in M$ such that either $d(p,\partial M) > s_\ms(p)$ or $p\in\partial M$.
\end{definition}

\begin{remark}
Note that $\mathcal{B}_\ms$ is a cover of $M$ by \cref{cor:BetterBalls}. Therefore, in all our arguments we can work only with stability balls in $\mathcal{B}_\ms$, which are better behaving.
\end{remark}
}

The following lemma is a consequence of the curvature estimates \cref{thm:curvature-estimates} and says that the second fundamental form of a free boundary minimal hypersurface is controlled at the radius of stability. 
\begin{lemma} \label{lem:BoundAbove}
For any $\varepsilon>0$, there exist constants $\lambda = \lambda(\amb,g,C_{CE},\varepsilon)\geq 2$ large enough and $\bar{r}=\bar{r}(\amb,g,C_{CE},\varepsilon)>0$ small enough in \cref{def:stability-radius} such that, for all $p\in M$, it holds
\[
s_\ms(p) \sup_{x\in\ms\cap B(p,3s_\ms(p))} \abs{A^\ms}(x) \le \varepsilon.
\]
\end{lemma}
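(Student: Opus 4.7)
The plan is to leverage the curvature estimates of \cref{thm:curvature-estimates} directly: the definition of the stability radius gives stability of $\ms$ precisely on a ball of radius $\lambda s_\ms(p)$, and then the scale-invariant bound on $|A^\ms|$ provided by the curvature estimates will deliver the conclusion after choosing $\lambda$ large enough (so that, roughly, $C_{CE}/\lambda \le \varepsilon$) and $\bar r$ small enough (so that we stay within the convexity radius).

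More precisely, first I would fix $\varepsilon>0$ and choose $\lambda\ge \max\{6,\,2,\,C_{CE}/\varepsilon\}$, and then choose $\bar r>0$ small enough that $\lambda \bar r<\conv_M$, so that every stability ball $B(p,\lambda s_\ms(p))$ is contained in a convex geodesic ball where \cref{thm:curvature-estimates} applies. Next, given $p\in M$, the key observation is that $\ms$ is stable in the full ball $B(p,\lambda s_\ms(p))$: indeed, from the supremum in \cref{def:stability-radius}, $\ms$ is stable in $B(p,\lambda r)$ for every $r<s_\ms(p)$, and any section of $N\ms$ compactly supported in $B(p,\lambda s_\ms(p))$ is in fact compactly supported in $B(p,\lambda r)$ for some $r<s_\ms(p)$ sufficiently close to $s_\ms(p)$, so the nonnegativity of $Q^\ms$ passes to the limit.

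Now I would apply \cref{thm:curvature-estimates} with $r=\lambda s_\ms(p)$, obtaining
\[
\sup_{x\in\ms\cap B(p,\lambda s_\ms(p)/2)} \abs{A^\ms}(x)\le \frac{C_{CE}}{\lambda s_\ms(p)}.
\]
Since $\lambda\ge 6$ we have $3s_\ms(p)\le \lambda s_\ms(p)/2$, so $B(p,3s_\ms(p))\subset B(p,\lambda s_\ms(p)/2)$, and multiplying the previous estimate by $s_\ms(p)$ yields
\[
s_\ms(p)\sup_{x\in\ms\cap B(p,3s_\ms(p))}\abs{A^\ms}(x)\le \frac{C_{CE}}{\lambda}\le \varepsilon,
\]
as desired.

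There is no serious obstacle here; the only small technical point is the passage from stability on all balls $B(p,\lambda r)$ with $r<s_\ms(p)$ to stability on the open ball $B(p,\lambda s_\ms(p))$ itself, which I addressed via the compact-support argument above. Note that the parameter dependence matches the statement: $\lambda$ depends on $M,g,C_{CE},\varepsilon$, and $\bar r$ additionally needs $\lambda \bar r<\conv_M$, which by \cref{lem:stab-continuous-positive} and compactness of $M$ can be enforced with a choice depending only on $M,g,\lambda$ and hence on $M,g,C_{CE},\varepsilon$.
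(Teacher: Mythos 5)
Your proof is correct and takes the same approach as the paper: apply the curvature estimates \cref{thm:curvature-estimates} at radius $\lambda s_\ms(p)$, observe that $\lambda\ge 6$ gives $B(p,3s_\ms(p))\subset B(p,\lambda s_\ms(p)/2)$, and choose $\lambda\ge C_{CE}/\varepsilon$. The only added value is your explicit justification (via the compact-support argument) that stability holds on the full open ball $B(p,\lambda s_\ms(p))$ rather than just on $B(p,\lambda r)$ for $r<s_\ms(p)$; the paper leaves this routine point implicit.
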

\begin{proof}
By the curvature estimates \cref{thm:curvature-estimates} we have that, for any $\varepsilon>0$, provided we choose $\lambda=\lambda(M,g,C_{CE},\varepsilon)\ge 6$ large enough (and making $\bar{r} = \bar{r}(M,g,\lambda)>0$ small so that $\lambda\bar{r}<\conv_M$) we get
    \[
        \sup_{x\in\ms\cap B(p,3s_\ms(p))} \abs{A^\ms}(x)     \le \frac{C_{CE}}{\lambda s_\ms(p)}
            \leq \frac{\varepsilon}{s_\ms(p)}. \qedhere
    \]
\end{proof}

On the other hand, the following lemma shows that, passed the radius of stability, there exists one point where the second fundamental for of the free boundary minimal hypersurface is sufficiently large.
\begin{lemma} \label{lem:BoundBelow}
 We can choose $\lambda = \lambda(\amb,g)\geq 2$ large enough and $\bar{r}=\bar{r}(\amb,g)>0$ small enough in \cref{def:stability-radius} such that, for all $p\in M$ with $s_\ms(p)<\bar r$, there exists $q\in B(p, 2\lambda s_\ms(p))$ with
 \[
s_\ms(p) \abs{A^\ms}(q) > \kappa,
 \]
 for some constant $\kappa=\kappa(M,g,\lambda)>0$.
\end{lemma}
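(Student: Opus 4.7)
The plan is to argue by contradiction via a blowup and smooth compactness argument. Suppose the conclusion fails; then for every integer $j\ge 1$ one finds a free boundary minimal hypersurface $\ms_j\subset M$ and a point $p_j\in M$ such that $s_j \eqdef s_{\ms_j}(p_j)<\bar r$ and
\[
s_j \abs{A^{\ms_j}}(q)\le \frac{1}{j} \quad\text{for all } q\in B(p_j,2\lambda s_j).
\]
Set $r_j\eqdef\min(\bar r, 3s_j/2)$, so that $s_j<r_j\le\bar r$ and $\lambda r_j<2\lambda s_j$. Showing that $\ms_j$ is stable in $B(p_j,\lambda r_j)$ for $j$ large would yield $s_{\ms_j}(p_j)\ge r_j>s_j$, contradicting the definition of $s_j$; this would establish the lemma with $\kappa$ equal to any fixed positive constant.

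To prove such stability, I would rescale the ambient metric by $s_j^{-2}$ at $p_j$. In the rescaled metrics $(M,s_j^{-2}g,p_j)$ the hypersurfaces $\tilde\ms_j$ satisfy $\abs{A^{\tilde\ms_j}}\le 1/j$ on $B_{\tilde g_j}(p_j,2\lambda)$. Up to subsequence, either $s_j\to s_\infty>0$, in which case the rescaled ambient is essentially $(M,s_\infty^{-2}g)$; or $s_j\to 0$, in which case $(M,s_j^{-2}g,p_j)$ converges in the pointed Cheeger--Gromov sense to $(\R^{n+1},g_{\mathrm{Eucl}})$ or to $(\R^{n+1}_+,g_{\mathrm{Eucl}})$, depending on whether $d(p_j,\partial M)/s_j$ tends to infinity or stays bounded. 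Using \cref{cor:density-estimate} for uniform area bounds together with the curvature bound $\abs{A^{\tilde\ms_j}}\le 1/j$, smooth compactness for free boundary minimal hypersurfaces yields a subsequential smooth limit $\ms_\infty$ inside $B_{g_\infty}(p_\infty,2\lambda)$, which has vanishing second fundamental form and meets the ambient boundary (if any) orthogonally; hence $\ms_\infty$ is a totally geodesic piece, and in the blowup case an affine (half-)hyperplane.

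On balls of bounded radius in an ambient with bounded geometry, any totally geodesic free boundary hypersurface is strictly stable provided the ball is small relative to the ambient curvature. This is ensured by choosing $\bar r$ small enough depending on $(M,g,\lambda)$: indeed $\lambda r_j/s_j\in(\lambda,3\lambda/2]$ is bounded in the rescaled metric, while the rescaled ambient curvature equals $s_j^2$ times the original and $s_j\le\bar r$. Hence $\ms_\infty$ is strictly stable on $B_{g_\infty}(p_\infty,\lambda r_j/s_j)$ with a definite spectral gap for the Jacobi operator (with Robin boundary condition). By smooth convergence and continuity of the second variation form $Q$, $\tilde\ms_j$ inherits stability on $B_{\tilde g_j}(p_j,\lambda r_j/s_j)$ for $j$ sufficiently large; rescaling back gives stability of $\ms_j$ on $B_g(p_j,\lambda r_j)$, the desired contradiction.

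The main obstacle is the smooth compactness step in the free boundary setting: one needs that a sequence of properly embedded free boundary minimal hypersurfaces with uniformly bounded second fundamental form and uniformly bounded area subconverges smoothly, both as hypersurfaces and with the orthogonal boundary condition preserved, to a free boundary minimal limit. This is however a standard ingredient in the free boundary theory underlying \cref{thm:curvature-estimates}, and once it is granted, the remaining continuity and rescaling computations are routine.
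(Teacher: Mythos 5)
Your overall strategy — argue by contradiction, rescale by $s_j^{-1}$ so that the second fundamental form of $\tilde\ms_j$ tends to zero, identify the blowup limit as a totally geodesic (half-)hypersurface, and transfer its strict stability back to $\ms_j$ — is the right idea and matches the spirit of the paper's proof. But the way you set it up introduces a genuine gap.

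The problem is the smooth compactness step. You invoke \cref{cor:density-estimate} to supply uniform area bounds on the rescaled hypersurfaces $\tilde\ms_j$, but that corollary only bounds $\Theta^{\ms_j}_g(x,r)$ from above by $C_{DE}\Haus^n(\ms_j)$, and the negation of the lemma forces you to let $\ms_j$ vary over \emph{all} admissible free boundary minimal hypersurfaces, with no a priori bound on $\Haus^n(\ms_j)$. The lemma asserts that $\lambda$, $\bar r$ depend only on $(M,g)$ and that $\kappa$ depends only on $(M,g,\lambda)$ — in particular they must not depend on an area bound $\CA$ — and the paper really does use this $\CA$-independence downstream (it is what makes \cref{lem:ConicalStabBound} and the $n+1=3$ refinements work with $\CA$-free constants). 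Without a uniform area bound, the number of components of $\tilde\ms_j$ in the ball of radius $2\lambda$ can go to infinity; the limit of the blowups is then in general only a lamination with (possibly uncountably many) leaves, not a smooth hypersurface, and the smooth compactness theorem you appeal to simply does not apply. So either your argument silently smuggles in a dependence on $\CA$ (weakening the statement), or the compactness step is unjustified.

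The fix — and this is what the paper actually does — is to dispense with the blowup limit altogether and argue component by component. If $s_\ms(p)\abs{A^\ms}\le\kappa$ on $B(p,2\lambda s_\ms(p))$ and $\bar r$ is chosen small enough that the metric is $C^2$-close to Euclidean on $B(p,2\lambda\bar r)$, then in the rescaled chart each connected component of $\ms\cap B(p,2\lambda s_\ms(p))$ is a graph over a (half-)hyperplane with small gradient and small second fundamental form. On such a graph the Jacobi form is a small perturbation of $\int\abs{\nabla u}^2$ (the curvature terms $\abs{A}^2$, $\Ric(\nu,\nu)$, $\II^{\partial M}$ all rescale to be $\bigO(\kappa^2)+\bigO(\bar r^2)$), so by the Poincaré inequality on a ball of fixed radius it is nonnegative on compactly supported sections. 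Since the second variation form of $\ms$ in the ball is the sum of the forms over its (disjoint) components, stability of each component yields stability of $\ms\cap B(p,2\lambda s_\ms(p))$, giving the contradiction quantitatively and with constants depending only on $(M,g,\lambda)$. This avoids compactness and the area bound entirely, whereas your version needs to be reorganized in exactly this way before the final continuity step can be justified.
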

\begin{proof}
This is the analogue in the free boundary case of \cite{Song2019}*{(8)}, but the proof is the same. Let us sketch it for completeness. 
Taking $\bar r$ possibly smaller (depending on $\lambda$), we can assume that the metric in $B(p,2\lambda\bar r)$ is arbitrary close to the Euclidean metric. 
Let us now consider a constant $\kappa>0$ to be fixed later. If $s_\ms(p)\abs{A^\ms}(q)\le \kappa$ for all $q\in B(p,2\lambda s_\ms(p))$, then, choosing $\kappa$ sufficiently small (depending on $\lambda$), we have that (in chart) $\ms$ is arbitrary close (in the $C^\infty$-graph sense) to a hyperplane in a Euclidean ball. In particular, $\ms\cap B(p,2\lambda s_\ms(p))$ is stable.
This proves that, if $s_\ms(p)<\bar r$, which implies that $\ms\cap B(p,2\lambda s_\ms(p))$ is unstable, then there exists $q\in B(p,2\lambda s_\ms(p))$ such that $s_\ms(p)\abs{A^\ms}(q)>\kappa$, concluding the proof.
\end{proof}

\subsection{Controlled topology at the radius of stability}

In this subsection, we prove that (suitably choosing $\lambda,\bar r$ in the definition \cref{def:stability-radius} of the stability radius) the topology of a minimal hypersurface is well-controlled in stability balls, which follows from the curvature estimates \cref{thm:curvature-estimates} for stable free boundary minimal hypersurfaces, or more precisely from its consequence \cref{lem:BoundAbove} mentioned above.

\begin{proposition} \label{prop:write-as-graph}
    We can choose $\lambda = \lambda(\amb,g,C_{CE})\geq 2$ large enough and $\bar{r}= \bar{r}(\amb,g,C_{CE})>0$ small enough in the definition of the stability radius $s_\ms(p) = s_\ms(p,\lambda,\bar r)$ (cf. \cref{def:stability-radius}), so that the following {properties hold}.

    \begin{enumerate}[label={\normalfont(\roman*)}]
    \item\label{wag:i} For all $B=B(p,s_\ms(p))\in \mathcal{B}_\ms$, each component of $\ms\cap B(p,3s_\ms(p))$ either does not intersect $B$ or its intersection with $B$ consists of only one connected component.
    \item\label{wag:ii} For all $B\in\mathcal{B}_\ms$, we have
     \begin{equation*}
    	\abs{\left\{\text{components of } \partial \ms\cap B \right\}} \leq \abs{\left\{\text{components of } \ms\cap B\right\}}.
    \end{equation*}
    \item\label{wag:iii} For every $B_1,\ldots,B_N\in \mathcal{B}_\ms$, we have that $\Sigma \cap\bigcap_{i=1}^NB_i$ and $\partial\Sigma \cap\bigcap_{i=1}^NB_i$ are either empty or topological discs (of dimension $n$ and $n-1$, respectively).
    \end{enumerate}
\end{proposition}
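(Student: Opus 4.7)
The plan is to exploit \cref{lem:BoundAbove}, which gives arbitrarily small $\varepsilon = s_\ms(p)\sup_{\ms\cap B(p,3s_\ms(p))}\abs{A^\ms}$ at the cost of choosing $\lambda$ large and $\bar r$ small, to represent each component of $\ms\cap B(p,3s_\ms(p))$ as a nearly flat graph in a suitable chart around $p$. Properties \ref{wag:i}--\ref{wag:iii} will then follow from the interplay of such graphs with nearly Euclidean balls in a single chart.

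More precisely, for any prescribed $\varepsilon>0$ (to be fixed small at the end), I would pick $\lambda=\lambda(\amb,g,C_{CE},\varepsilon)$ and $\bar r=\bar r(\amb,g,C_{CE},\varepsilon)$ so that the curvature bound $s_\ms(p)\abs{A^\ms}\le\varepsilon$ holds on $\ms\cap B(p,3s_\ms(p))$, and so that $g$ is $\varepsilon$-close in $C^2$ to the Euclidean (respectively half-Euclidean) metric in any geodesic normal chart centered at an interior point $p$ with $d(p,\partial\amb)>\bar r$ (respectively Fermi chart around $p\in\partial\amb$); by \cref{def:GoodBalls} every $B\in\mathcal{B}_\ms$ falls into one of these two categories. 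In such a chart, an ODE-type estimate along intrinsic paths in a component $C$ of $\ms\cap B(p,3s_\ms(p))$ --- where the tangent plane rotates at rate bounded by $\abs{A^\ms}\le\varepsilon/s_\ms(p)$ and the intrinsic diameter of $C$ is $\bigO(s_\ms(p))$ --- shows that $C$ is the graph of a $C^1$-small function $u\colon\Omega\to\R$ over an affine hyperplane $\Pi$ through a base point of $C$. In the boundary case, free-boundary orthogonality forces $\Pi$ to be nearly perpendicular to $\{x^{n+1}=0\}$, so the graph extends smoothly down to the boundary of the chart.

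Granting this representation, \ref{wag:i} is immediate because the preimage of $B$ in $\Omega$ is the intersection of a nearly Euclidean ball with $\Omega$, hence connected; and \ref{wag:ii} follows because each trace $C\cap\partial\amb$ equals the graph of $u|_{\Omega\cap\{x^{n+1}=0\}}$, which is connected or empty, so distinct boundary components lie in distinct volume components. For \ref{wag:iii} with $B_1,\dots,B_N\in\mathcal{B}_\ms$ having non-empty common intersection, I would pick $B_k$ of minimal radius; by \cref{lem:stab-continuous-positive} the remaining $B_j$ have comparable radii and $\bigcap_i B_i\subset B(p_k,3s_\ms(p_k))$. In the chart around $p_k$ (Fermi if $p_k\in\partial\amb$, normal otherwise), every $B_i$ appears as a nearly Euclidean (half-)ball, so $\bigcap_i B_i$ is nearly convex, and the graph representation identifies $\ms\cap\bigcap_i B_i$ with a graph over a nearly convex subregion of $\Pi$, hence a topological disc; the analogous statement for $\partial\ms\cap\bigcap_i B_i$ follows by restricting to $\{x^{n+1}=0\}$.

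The main obstacle is producing a \emph{single} chart in which all the $B_i$ and all the relevant components of $\ms$ are simultaneously controlled for \ref{wag:iii}. The Lipschitz estimate of \cref{lem:stab-continuous-positive} is what makes this possible, by turning $B_k\cap B_j\ne\emptyset$ into uniform radius comparability and the containment $\bigcap_i B_i\subset B(p_k,3s_\ms(p_k))$; at the same time, \cref{def:GoodBalls} prevents any interior-centered $B_i\in\mathcal{B}_\ms$ from touching $\partial\amb$, so no delicate mixing of interior and boundary charts arises within a single intersection.
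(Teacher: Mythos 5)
Your proposal takes essentially the same route as the paper: produce a chart (Fermi for boundary balls, normal for interior ones), invoke the curvature bound from \cref{lem:BoundAbove} to write each component of $\Sigma\cap B(p,3s_\ms(p))$ as a nearly flat graph over a (half-)hyperplane via a Colding--Minicozzi-type argument, then argue that the intersection with the ball(s) is a topological disc. The minimal-radius observation and the use of the Lipschitz bound from \cref{lem:stab-continuous-positive} for \ref{wag:iii} is a useful way to make explicit why all $B_i$ fit in a single chart, something the paper leaves implicit.

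However, the step from the graph representation to the disc structure is where your proposal has a genuine gap. You assert that ``the preimage of $B$ in $\Omega$ is the intersection of a nearly Euclidean ball with $\Omega$, hence connected'' and later that a ``graph over a nearly convex subregion of $\Pi$'' is a topological disc. Neither implication is automatic: a $C^1$-small graph could in principle enter and exit a metric ball more than once, and being a graph over a connected domain does not by itself prevent the intersection region from acquiring topology. What makes these statements true, and what the paper does, is a \emph{second-fundamental-form comparison}: in the chosen chart, the Euclidean second fundamental form of $\partial B\setminus\partial M$ (resp.\ of $\partial B\cap\partial M$ inside $\{x_1=0\}$) is positive and bounded below by $1/(6s_\ms(p))$, while by \cref{lem:BoundAbove} the second fundamental form of $\tilde\ms$ can be made strictly smaller. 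A graphical hypersurface that curves strictly less than the (convex) sphere it crosses can cross each geodesic sphere of $B$ at most in a connected set; this is the quantitative input that forces $\tilde\ms\cap B$, $\partial\tilde\ms\cap B$, and the iterated intersections $\tilde\ms\cap\bigcap_i B_i$ to be discs. To make your argument rigorous you would either need this comparison, or an explicit Morse-theoretic analysis of $x\mapsto \operatorname{dist}((x,u(x)),\text{center})$ showing the sublevel set has a single critical point (a minimum). As written, ``nearly Euclidean, hence connected'' does not supply that.

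A smaller point: your remark that ``no delicate mixing of interior and boundary charts arises'' is not quite accurate. An interior-centered $B_i\in\mathcal{B}_\ms$ does not meet $\partial M$, but it can certainly intersect a boundary-centered ball, so the family $B_1,\dots,B_N$ in \ref{wag:iii} can mix the two types. The paper handles this by reducing to the closed case of Song whenever at least one $B_i$ is interior (then $\bigcap_i B_i$ lies in the interior and one works in a normal chart); this reduction, or an equivalent observation, should appear explicitly.
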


\begin{proof}
The proof of this proposition is fairly standard, so we only give the main ingredients for completeness.
First note that, if the ball $B\in\mathcal{B}_\ms$ or at least one of the balls $B_1,\ldots,B_N\in\mathcal{B}_\ms$ are contained in the interior of $M$, then the result follows from the closed case in \cite{Song2019}*{Section~2.1}.
Therefore, let us assume that all the balls in the statement are centered at the boundary (recall that, by \cref{def:GoodBalls}, this is the only other possibility).

Let us then consider a ball $B=B(p,s_\ms(p))\in\mathcal{B}_\ms$ such that $p\in\partial M$. 
We can assume (taking $\bar r=\bar r(M,g)>0$ sufficiently small) to be working in a chart such that $B(p,3\bar r)$ is sent to the unit half-ball in $\R^{n+1}_+\eqdef \R^{n+1}\cap\{x_{1}\ge 0\}$, the orthogonality condition along $\{x_1=0\}$ with respect to $g$ and to the Euclidean metric are the same, and the metric in this chart is sufficiently close to the Euclidean one (in the $C^3$ topology).
This can be achieved by using Fermi coordinates.

In particular, we can assume that every ball $B'=B(q,r)$ contained in $B(p,3\bar r)$ with $q\in\partial M$ is sufficiently close to a half-ball in the sense that:
\begin{itemize}
\item the Euclidean second fundamental form of $\partial B'\setminus\partial M$ is positive (in the sense that, applied to any tangent vector to $\partial B'$, it points outside of $B'$) with norm bounded below by $1/(6s_\ms(p))$;
\item $B'\cap\partial M$ is a topological $(n-1)$-disc contained in the plane $\{x_1=0\}$ (where $\partial M$ is mapped) and the second fundamental form of $\partial B' \cap \partial M$ in $\{x_1=0\}$ is positive with norm bounded below by $1/(6s_\ms(p))$.
\end{itemize}
Note that here we are using that $q\in\partial M$, otherwise for example $B'$ could have more complicated intersection with $\partial M$.
    
Thanks to \cref{lem:BoundAbove}, for any $\varepsilon>0$, provided we choose $\lambda=\lambda(M,g,C_{CE},\varepsilon)\geq 6$ large enough and $\bar{r} = \bar{r}(M,g,C_{CE},\varepsilon)>0$ small enough, we get
    \[
        s_\ms(p)\sup_{x\in\ms\cap B(p,3s_\ms(p))} \abs{A^\ms}(x)     \le \varepsilon.
    \]
Thus, given that the corresponding Euclidean quantities are comparable and choosing $\varepsilon$ (universally) small enough, we can apply \cite{ColdingMinicozzi2011}*{Lemma~2.4} to obtain that any connected component $\tilde\ms$ of $\ms$ in $B(p,3s_\ms(p))$ is a graph with bounded gradient over the half-hyperplane $\R^n_+\eqdef\{x_{n+1}=0,x_1\ge0\}$ (thanks to the free boundary condition).

Furthermore, since $B$ is close to the corresponding Euclidean half-ball as described above, $\varepsilon>0$ can be chosen small enough such that the Euclidean second fundamental form of $\tilde\ms$ is strictly less than the one of $\partial B\setminus\partial M$. Therefore, $\tilde\ms\cap B$ is the graph over a topological $n$-disc.

The same argument proves that $\tilde\Sigma\cap\partial M$ is a topological $(n-1)$-disc, since (with the same choice of $\varepsilon$) the Euclidean second fundamental form of $\tilde\ms\cap\partial M$ is strictly less than the one of $\partial B\cap \partial M$ (recall that $\ms$ is orthogonal to $\{x_1=0\}$ by the choice of Fermi chart). 

From this observation, the proof of the proposition follows easily. Indeed, we have that $\tilde\ms\cap B$ and $\tilde\ms\cap \partial B$ are connected (proving \ref{wag:i} and \ref{wag:ii}).
Moreover, choosing $B=B_1$ and possibly other stability balls $B_2,\ldots, B_N\in\mathcal{B}_\ms$ centered at the boundary $\partial M$, we can repeat the same argument for each ball (since they all have Euclidean second fundamental form bounded below by $1/(6s_\ms(p))$) and get that $\tilde{\ms}\cap (B_1\cap\ldots\cap B_N)$ and $\partial\tilde\ms\cap (B_1\cap\ldots\cap B_N)$ are topological discs, which proves \ref{wag:iii}.
\end{proof}

Thanks to the previous proposition about the behaviour of $\ms$ on a ball of stability, we obtain the following two corollaries.

\begin{corollary} \label{cor:acyclic-cover}
    We can choose $\lambda = \lambda(\amb,g,C_{CE})\geq 2$ large enough and $\bar{r}=\bar{r}(\amb,g,C_{CE})>0$ small enough in \cref{def:stability-radius} so that {
        \[
    	    \left\{  \text{components of } \ms\cap B \st  B\in\mathcal{B}_\ms  \right\}
    	    \,\text{ is an acyclic cover of }\ms,
    	\]
    	and
    	\[
    	    \left\{ \text{components of } \partial \ms\cap B\st  B\in\mathcal{B}_\ms  \right\}
    	    \,\text{ is an acyclic cover of }\partial \ms.
    	\]}
\end{corollary}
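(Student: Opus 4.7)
The plan is to deduce this corollary directly from \cref{prop:write-as-graph}\ref{wag:iii}, by first arguing that the families cover $\ms$ and $\partial\ms$, and then showing that every nonempty finite intersection of their members is itself a topological disc, and in particular acyclic.

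First I would fix $\lambda$ and $\bar r$ so that both \cref{prop:write-as-graph} and \cref{cor:BetterBalls} apply simultaneously (this only requires taking $\lambda$ large and $\bar r$ small, which is consistent with what both statements demand). Then, since $\mathcal{B}_\ms$ is a cover of $M$ (by the remark following \cref{def:GoodBalls}), every $x\in\ms$ lies in some $B\in\mathcal{B}_\ms$, hence in some connected component of $\ms\cap B$, and similarly for $\partial\ms$. This gives the cover property for both families.

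For the acyclicity, let $B_1,\dots,B_N\in\mathcal{B}_\ms$ and let $C_i$ be a connected component of $\ms\cap B_i$ for each $i$, with $C_1\cap\cdots\cap C_N\neq\emptyset$. Pick $x$ in this intersection. Then $x\in\ms\cap(B_1\cap\cdots\cap B_N)$, which by \cref{prop:write-as-graph}\ref{wag:iii} is a topological $n$-disc $D$; in particular $D$ is connected. Since $D\subset\ms\cap B_i$ and $x\in D\cap C_i$, the connectedness of $D$ forces $D\subset C_i$ for every $i$, hence $D\subset C_1\cap\cdots\cap C_N$. Conversely, $C_1\cap\cdots\cap C_N\subset\ms\cap(B_1\cap\cdots\cap B_N)=D$, so the two sets coincide. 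Therefore any nonempty finite intersection of members of the first family is a topological disc, and thus has $b_0=1$ and $b_k=0$ for $k\geq 1$. The very same argument, using the second half of \cref{prop:write-as-graph}\ref{wag:iii} for $\partial\ms\cap(B_1\cap\cdots\cap B_N)$ and components of $\partial\ms\cap B_i$, handles the boundary family.

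The only potential subtlety is making sure that intersecting components, rather than the full slices $\ms\cap B_i$, still yields a disc; but the argument above shows that the intersection of components is exactly the (unique) component of the intersection of the slices, which is $D$ itself. No further geometric input beyond \cref{prop:write-as-graph} is needed, so the corollary follows immediately.
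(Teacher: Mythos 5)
Your proof is correct and takes essentially the same approach as the paper: both deduce acyclicity directly from \cref{prop:write-as-graph}\ref{wag:iii}, and the cover property from \cref{cor:BetterBalls}. You fill in a step the paper leaves implicit, namely that a nonempty intersection of \emph{components} equals a connected component of $\ms\cap\bigcap_i B_i$ (via the open-and-closed/connectedness argument), which is the right way to pass from information about $\ms\cap\bigcap_i B_i$ to information about the actual cover elements. One small caveat worth noting, which applies equally to the paper's own proof: as written, \cref{prop:write-as-graph}\ref{wag:iii} asserts that $\ms\cap\bigcap_i B_i$ is itself a single disc, but what its proof actually delivers is that each \emph{component} $\tilde\ms\cap\bigcap_i B_i$ (for $\tilde\ms$ a sheet of $\ms$ near the ball) is a disc; if one uses that slightly weaker, correct version, your ``Conversely'' step needs the extra observation that $\bigcap_i C_i\subset C_1\cap\bigcap_{i\ge2}B_i=\tilde\ms_1\cap\bigcap_i B_i$, which pins the intersection inside the single component $D$ containing $x$ and then the open-and-closed argument closes the loop. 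This is a minor patch to an imprecision inherited from the cited proposition, not a defect of your argument.
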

\begin{proof}
    Let $\lambda = \lambda(\amb,g,C_{CE})\geq 2$ and $\bar r = \bar r(\amb,g,C_{CE})>0$ be such that \cref{prop:write-as-graph} holds.
    {Then, as a consequence of the proposition, for all $B_1,\ldots,B_N\in\mathcal{B}_\ms$, we have that $\ms\cap \bigcap_{i=1}^NB_i$ and $\partial\ms\cap \bigcap_{i=1}^NB_i$ are either empty or have trivial cohomology, which implies that the two covers in the statement are acyclic.}
\end{proof}

\subsection{Bounding the number of sheets in balls of stability}

Together with the area bound, the consequence of the monotonicity formula, \cref{cor:density-estimate}, implies that $\ms$ cannot have arbitrarily many sheets in the balls of the cover obtained in \cref{cor:acyclic-cover}.
\begin{proposition} \label{prop:sheeting-results}
    There exist $\lambda = \lambda(\amb,g,C_{CE})\geq 2$ large enough, $\bar{r}=\bar{r}(\amb,g,C_{CE})>0$ small enough in \cref{def:stability-radius}, and a constant $C_{CS}= C_{CS}(\amb,g) > 0$ such that
    {
    \begin{equation*} \label{eq:sheeting-for-sigma}	
		\abs{\left\{\text{components of } \ms\cap B \right\}}
		\leq C_{CS}\CA,
	\end{equation*}
    for all $B\in\mathcal{B}_\ms$.}
\end{proposition}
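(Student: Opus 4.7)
The plan is to pair a universal lower bound on the area of each component with the upper density bound coming from the monotonicity formula. The lower bound is extracted from the graph description of \cref{prop:write-as-graph}, while the upper bound is \cref{cor:density-estimate}; the factor $s_\ms(p)^n$ cancels in the ratio, leaving a bound proportional to $\CA$ and independent of the stability radius.

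First I would fix $\lambda=\lambda(M,g,C_{CE})$ and $\bar r=\bar r(M,g,C_{CE})$ so that \cref{prop:write-as-graph} applies with a small (universal) gradient parameter $\varepsilon$ and so that $3\bar r\le r_{DE}$. Fix a ball $B=B(p,s_\ms(p))\in\mathcal{B}_\ms$, and denote by $C_1,\dots,C_N$ the components of $\ms\cap B$. By \cref{prop:write-as-graph}\ref{wag:i}, each $C_i$ is contained in a unique component $\tilde C_i$ of $\ms\cap B(p,3s_\ms(p))$. In the Fermi chart used in the proof of \cref{prop:write-as-graph}, each $\tilde C_i$ is the graph of a function $u_i\colon D_i\to\R$ over a domain $D_i\subset\R^n_+$ satisfying $|\nabla u_i|\le\varepsilon$. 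Pick any $q_i\in\tilde C_i\cap B$ and let $q_i^\flat\in D_i$ be its projection to $\R^n_+$. A direct triangle-inequality estimate in the near-Euclidean chart (using the smallness of $\varepsilon$) shows that, for every $y\in\R^n_+$ with $|y-q_i^\flat|\le s_\ms(p)$, the graph point $(y,u_i(y))$ still lies in the chart image of $B(p,3s_\ms(p))$. Since $\tilde C_i$ cannot terminate inside the open ball (except possibly at $\partial M$, which is already built into $\R^n_+$), it follows that $D_i$ contains the half-disc of radius $s_\ms(p)$ centred at $q_i^\flat$ intersected with $\R^n_+$, whose Euclidean $n$-volume is at least $\omega_n s_\ms(p)^n/2$. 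Integrating $\sqrt{1+|\nabla u_i|^2}\ge 1$ over $D_i$ then yields $\Haus^n(\tilde C_i)\ge c_n\,s_\ms(p)^n$ with $c_n$ depending only on $n$.

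Since the $\tilde C_i$ are pairwise disjoint inside $\ms\cap B(p,3s_\ms(p))$, summing these lower bounds and then applying the upper density estimate from \cref{cor:density-estimate} (valid because $3s_\ms(p)\le 3\bar r\le r_{DE}$) yields
\[
N\,c_n\,s_\ms(p)^n \le \Haus^n\!\left(\ms\cap B(p,3s_\ms(p))\right) \le 3^n C_{DE}\,\omega_n\,s_\ms(p)^n\,\Haus^n(\ms) \le 3^n C_{DE}\,\omega_n\,s_\ms(p)^n\,\CA.
\]
Dividing by $s_\ms(p)^n$ produces the desired bound $N\le C_{CS}\CA$ with $C_{CS}\eqdef 3^n C_{DE}\omega_n/c_n$ depending only on $(M,g)$. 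The step requiring the most care is the lower bound on $\Haus^n(\tilde C_i)$: one must verify that the small-gradient graph cannot escape $B(p,3s_\ms(p))$ within projected distance $s_\ms(p)$ of any of its points, so that the entire half-disc is forced into $D_i$. Once this geometric fact is in place, the remainder is just the combination of the two area estimates.
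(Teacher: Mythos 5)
Your argument is correct and shares the paper's overall architecture: reduce via \cref{prop:write-as-graph}\ref{wag:i} to counting the components of $\ms\cap B(p,3s_\ms(p))$ meeting $B$, produce a lower bound of order $s_\ms(p)^n$ for the area of each, and close with the upper density estimate from \cref{cor:density-estimate}. The one genuine divergence is in how the lower bound is obtained. You use the graph description from \cref{prop:write-as-graph}: each component is a small-slope graph over a domain $D_i\subset\R^n_+$ that, by a continuation argument, contains a half-disc of radius $\sim s_\ms(p)$, so $\Haus^n(\tilde C_i)\gtrsim s_\ms(p)^n$. The paper instead invokes the \emph{lower} bound of \cref{cor:density-estimate} for each component $S_i$ (as justified in \cref{rem:how-to-use-density-estimate}, running the monotonicity formula on $S_i$ itself rather than on all of $\ms$), yielding $\Haus^n(S_i\cap B(x_i,s_\ms(p)))\ge C_{DE}^{-1}\omega_n s_\ms(p)^n$ with no recourse to the graphical picture. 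Both routes are sound; the monotonicity route is shorter and avoids the ``graph cannot escape'' step that you correctly flag as the delicate point. If you do write that step, frame it as an open-and-closed argument on $\{\abs{y-q_i^\flat}<s_\ms(p)\}\cap\R^n_+$: the set of such $y$ lying in $D_i$ is open by definition, and the Lipschitz bound $\abs{\nabla u_i}\le\varepsilon$ together with the distance estimate (which keeps the graph point strictly inside $B(p,3s_\ms(p))$) shows it is relatively closed, since a component of $\ms\cap B(p,3s_\ms(p))$ can only terminate on $\partial B(p,3s_\ms(p))\cup\partial M$, and the latter is already built into $\R^n_+$. Finally, note that the constant $c_n$ must be chosen depending only on $(M,g)$, not on $C_{CE}$; this is legitimate because the $C^3$-closeness of the Fermi chart metric to the Euclidean one is governed purely by the ambient geometry and only improves as $\bar r$ shrinks.
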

\begin{proof}
    Let $p\in M$ be the center of $B$ such that $B=B(p,s_\ms(p))$. Choose $\lambda=\lambda(M,g,C_{CE})\geq 2$ and $\bar r =\bar r(M,g,C_{CE})>0$ so that \cref{prop:write-as-graph} holds. 
    Let $S_1,\ldots,S_N$ be connected components of $\ms\cap B(p,3s_\ms(p))$ that intersect $B(p,s_\ms(p))$. By~\cref{prop:write-as-graph}\ref{wag:i}, each $S_i$ for $i=1,\ldots,N$ intersects $B(p,s_\ms(p))$ in a single connected component; thus, it suffices to obtain a bound for $N$.
    
    We let $x_1,\ldots,x_N$ be such that $x_i\in S_i\cap B(p,s_\ms(p))$ for all $i=1,\ldots,N$. Note that $B(x_i,s_\ms(p))\subset B(p,2s_\ms(p))$.
    Hence, choosing $2\bar r<r_{DE}$, we can apply the lower density estimate \cref{cor:density-estimate} to each of the $S_i$ (see \cref{rem:how-to-use-density-estimate}) and the upper density estimate to $\ms$, obtaining
    \begin{align*}
        N
            &= C_{DE}\sum_{i=1}^N C_{DE}^{-1}
            \leq C_{DE}\sum_{i=1}^N  \frac{\Haus^n(S_i\cap B(x_i,s_\ms(p))}{\omega_n s_{\ms}(p)^n}
            \\
            &\leq C_{DE} 2^n\frac{\Haus^n(\ms\cap B(p,2s_\ms(p))}{\omega_n (2s_{\ms}(p))^n}
            \leq 2^n C_{DE}^2 \Haus^n(\ms)
            \leq  C_{CS}\CA,
    \end{align*}
    where we used that $S_i\cap B(x_i,s_\ms(p))$ are disjoint subregions of $\ms\cap B(p,2s_\ms(p))$ and where we set $C_{CS} \eqdef 2^n C_{DE}^2$.
\end{proof}

\section{Setting and choice of parameters} \label{sec:choice-param}

From now on, if not otherwise stated, $(M^{n+1},g)$ denotes a compact Riemannian manifold of dimension $3\le n+1\le 7$ with (possibly empty) boundary and $\CA>0$ is a positive constant.
We let $\ms^n\subset \amb$ be a smooth, compact, properly embedded, free boundary minimal hypersurface with $n$-volume less or equal than $\CA$.
Moreover, we assume that the stability radius $s_\ms$ is defined in \cref{def:stability-radius} with respect to parameters $\lambda=\lambda(M,g,C_{CE})\ge 2$ and $0<\bar r = \bar r(M,g,C_{CE}) <1$ such that
\begin{itemize}
\item \cref{lem:Besicovitch-type-lemma} and \cref{prop:we-can-layer} hold;
\item \cref{lem:BoundBelow,prop:write-as-graph,cor:acyclic-cover,prop:sheeting-results} hold;
\item the product $100\lambda\bar r$ is less than $\conv_M$, $r_{MF}$, $r_{DE}$, defined in \cref{def:convexity-radius,thm:monotonicity,cor:density-estimate} respectively.
\end{itemize}

\begin{case3}
Observe that we write explicitly the dependence on the constant $C_{CE}$, since $C_{CE}$ depends on $\CA$ only for $n+1\ge 5$. Therefore, actually, $\lambda$ and $\bar r$ depend only on {$(M,g)$} for $n+1=3,4$ and depend also on $\CA$ for $n+1=5,6,7$.
\end{case3}

Observe that, setting $\bar \CA =\bar \CA(M,g,\CA)\eqdef 2\omega_n C_{DE} \CA$, by \cref{cor:density-estimate} we have that \[\Haus^n(\Sigma\cap B(p,r))\le \bar \CA r^n/2\] for all $p\in M$ and $0<r\le 100\lambda\bar r < r_{DE}$.

\section{The almost conical region} \label{sec:conical-region}

In this section we define the almost conical region in the setting with boundary, following \cite{Song2019}*{Section 2.2}.
In the setting of \cref{sec:choice-param}, the idea is to define a region in $M$ where $\Sigma$ has well-controlled curvature and topology. This region looks like a union of annuli where $\Sigma$ is close to Euclidean cones.

\subsection{Choice of cones} \label{sec:choice-cones}
Let us start by identifying a set of cones that will play the role of models for our free boundary minimal hypersurface in the ``almost conical region'' that we are about to define. In this subsection we define these Euclidean cones we are interested in and we prove some properties.

Given constants $\bar \CA \ge 1$ and $\BO>0$, we define $\mathcal{G}=\mathcal{G}(n,\bar \CA,\BO)$ to be the family of minimal cones $\Gamma\subset(\R^{n+1}, g_{\mathrm{Eucl}})$ and (possibly nonproperly embedded) free boundary minimal cones $\Gamma\subset(\R^{n+1}_+, g_{\mathrm{Eucl}})$, which are tipped at $0$, smooth outside $0$, and which satisfy
\[\Theta^\Gamma_{\mathrm{Eucl}}(0,1)\le\bar \CA \quad \text{ and } \quad \max_{y\in\Gamma\cap \partial B^{n+1}_{\mathrm{Eucl}}(0,1)} \abs{A^\Gamma(y)} \le \BO.
\]

\begin{remark}
Observe that the only nonproperly embedded free boundary minimal cone in $\mathcal{G}$ is the boundary plane $\partial\R^{n+1}_+$ in $\R^{n+1}_+$.
\end{remark}
\begin{remark}
Observe that the definition is the analogue in the free boundary setting to the definition of $\mathcal{G}_{\beta_0}$ in \cite{Song2019}*{Section 2.2}.
\end{remark}

\begin{case3}
Note that, when $n+1=3$, the only minimal cones in $(\R^{n+1},g_{\mathrm{Eucl}})$ and free boundary minimal cones in $(\R^{n+1}_+,g_{\mathrm{Eucl}})$ tipped at $0$ and smooth outside of $0$ are planes or half-planes. As a result, when $n=2$, $\mathcal{G} = \mathcal{G}(2)$ does not depend on $\bar \CA$ and $\BO$.
\end{case3}

\begin{lemma} \label{lem:TwoBallsBadCones}
There exists $\BO=\BO(n,\bar\CA)>0$ sufficiently large such that the following property holds. Let $\Gamma$ be a minimal cone in $(\R^{n+1},g_{\mathrm{Eucl}})$ or a free boundary minimal cone in $(\R^{n+1}_+,g_{\mathrm{Eucl}})$, which is tipped at $0$, smooth outside of $0$, and such that \[\Theta^\Gamma_{\mathrm{Eucl}}(0,1)\le\bar \CA \quad \text{ and } \quad \max_{y\in\Gamma\cap \partial B^{n+1}_{\mathrm{Eucl}}(0,1)} \abs{A^\Gamma(y)} > \BO.
\]
Then, there exist two stability balls $b,b'\subset A_{\mathrm{Eucl}}(0,1/2,2)$ such that \[
4\lambda b\cap 4\lambda b'=\emptyset \quad  \text{ and } \quad \radius(b),\radius(b') < 2^{-2000}.
\]

\begin{case3}
Observe that, as remarked above, there are no such cones for $n+1=3$. Therefore the lemma is trivially true.
\end{case3}
\end{lemma}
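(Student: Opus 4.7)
My approach is a direct, quantitative argument based on the scale invariance of cones and the Euclidean versions of the curvature estimates for stable (free boundary) minimal hypersurfaces. The case $n+1 = 3$ is vacuous, as observed in the statement itself: in dimension three the only cones meeting the hypotheses are (half-)planes, so $|A^\Gamma|$ vanishes identically and the condition $|A^\Gamma| > \BO$ is never satisfied for any $\BO > 0$.

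In dimension $n+1 \ge 4$, I would begin by picking $y^* \in \Gamma \cap \partial B^{n+1}_{\mathrm{Eucl}}(0,1)$ with $|A^\Gamma(y^*)| > \BO$ and exploit the homogeneity of the cone, i.e.\ $t\Gamma = \Gamma$ and $|A^\Gamma(ty)| = t^{-1}|A^\Gamma(y)|$ for every $t > 0$ and $y \in \Gamma$, to produce two candidate points on the same radial ray: $p_1 := y^*$ and $p_2 := \tfrac{3}{2}y^*$. Both lie in $\Gamma \cap A_{\mathrm{Eucl}}(0, 1/2, 2)$, satisfy $|p_1 - p_2| = 1/2$, and $|A^\Gamma(p_i)| > \tfrac{2}{3}\BO$ for $i = 1, 2$. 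Depending on whether $y^*$ lies on $\partial \R^{n+1}_+$ or in its interior, the entire ray (hence both $p_1, p_2$) stays in one stratum or the other, but the rest of the argument treats both cases in parallel.

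The heart of the proof is then to bound $s_\Gamma(p_i)$ from above. The density hypothesis $\Theta^\Gamma_{\mathrm{Eucl}}(0,1) \le \bar{\CA}$ together with the (interior or boundary) Euclidean monotonicity formula yield the local area bound
\[
\Haus^n(\Gamma \cap B(p, r)) \le \omega_n \bar{\CA}\, r^n \qquad \text{for every } p \text{ and every } r > 0,
\]
so the Euclidean version of the curvature estimate of \cref{thm:curvature-estimates}, with a constant $C = C(n, \bar{\CA})$, applies on any ball where $\Gamma$ is stable. Applied to the stability ball $B(p_i, \lambda s_\Gamma(p_i))$ this gives $|A^\Gamma(p_i)| \le C/(\lambda s_\Gamma(p_i))$, and combined with $|A^\Gamma(p_i)| > \tfrac{2}{3}\BO$ yields $s_\Gamma(p_i) \le 3C/(2\lambda \BO)$. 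I would then choose $\BO = \BO(n, \bar{\CA})$ large enough so that this upper bound falls below $\min\{2^{-2000}, 1/(20\lambda), 1/2\}$; the resulting stability balls $b := B(p_1, s_\Gamma(p_1))$ and $b' := B(p_2, s_\Gamma(p_2))$ have radii below $2^{-2000}$, are contained in $A_{\mathrm{Eucl}}(0, 1/2, 2)$ (since their radii are below $1/2$), and their $4\lambda$-dilates sit inside $B(p_i, 1/5)$ and are therefore disjoint because $|p_1 - p_2| = 1/2 > 2/5$.

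The main technical obstacle I expect is this invocation of the curvature estimates: \cref{thm:curvature-estimates} is stated for compact $(M,g)$ with a global area bound, whereas $\Gamma$ has infinite total mass. The resolution is that the proof of these estimates is purely local and uses only a local area bound on the ball in question, which the density hypothesis and monotonicity supply automatically. A secondary issue is the case distinction between $y^* \in \partial \R^{n+1}_+$, where one invokes the free boundary curvature estimate, and $y^*$ in the interior, where one invokes the interior estimate; both give identical quantitative conclusions and so do not complicate the argument.
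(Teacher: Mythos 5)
Your proposal is correct and follows essentially the same route as the paper: the key input in both is the curvature estimate for stable (free boundary) minimal hypersurfaces applied at the stability-ball scale, combined with the homogeneity of the cone to produce a second well-separated point along the same radial ray. The only cosmetic difference is that you bound $s_\Gamma(p_i)$ directly from $|A^\Gamma(p_i)|>\tfrac{2}{3}\BO$, whereas the paper phrases the same estimate as a contrapositive (if $s_\Gamma$ were bounded below on $A(0,3/4,1)$ then $|A^\Gamma|$ would be bounded above, forcing the existence of a point with small stability radius); you also correctly flag, and resolve, the localization of the area bound needed to invoke \cref{thm:curvature-estimates} on a cone of infinite mass.
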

\begin{proof}
Assume that $s_\Gamma(p)\ge  3\cdot 2^{-2002}$ for all $p\in A_{\mathrm{Eucl}}(0,3/4,1)$, then by \cref{thm:curvature-estimates} the second fundamental form of $\Gamma$ is bounded above by a constant depending on $n$ and $\bar \CA$ (and depending only on $n$ for $n+1\le4$). Therefore, setting $\BO=\BO(n,\bar\CA)$ sufficiently large, we can assume that there exists $p\in A_{\mathrm{Eucl}}(0,3/4,1)$ such that $s_\Gamma(p)< 3 \cdot 2^{-2002}$. Then, we can choose $b\eqdef B^{n+1}_{\mathrm{Eucl}}(p,s_\Gamma(p))$ and $b' \eqdef B^{n+1}_{\mathrm{Eucl}}(4p/3,s_\Gamma(4p/3))$ to conclude the proof. Note that $s_\Gamma(4p/3) = 4s_\Gamma(p)/3$.
\end{proof}

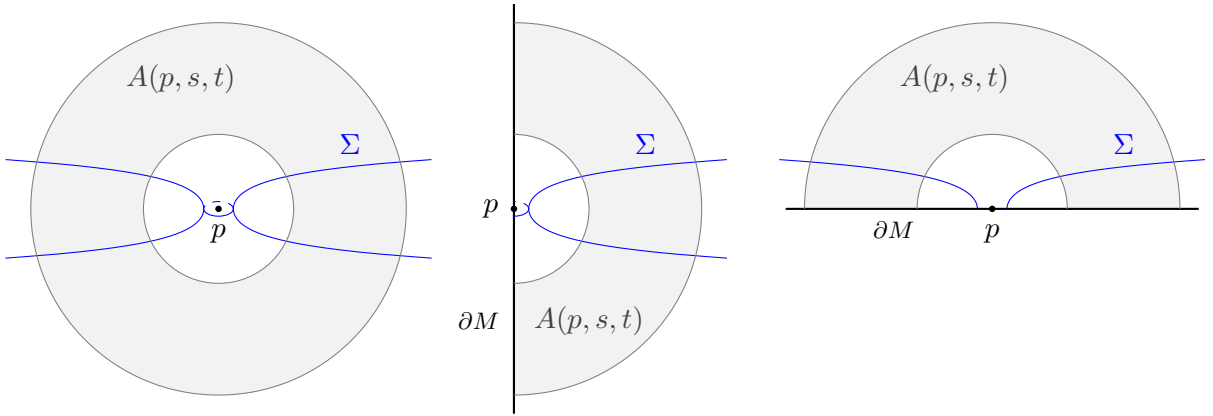
\begin{figure}[htpb]
\centering
%%%%%%%%%%%%%%%%%%%%%%%%%%%%%%%%%%%%%%%%%%%%%%%%%%%%%
%%% def_lam.tikz
%%%%%%%%%%%%%%%%%%%%%%%%%%%%%%%%%%%%%%%%%%%%%%%%%%%%%
\begin{tikzpicture}[scale=1.85]
\pgfmathsetmacro{\xa}{-3.2}
\pgfmathsetmacro{\xb}{-1.1}
\pgfmathsetmacro{\xc}{2.3}
\pgfmathsetmacro{\s}{0.8}
\pgfmathsetmacro{\t}{0.3}

\pgfmathsetmacro{\R}{0.45}
\pgfmathsetmacro{\f}{1.05}
\pgfmathsetmacro{\T}{0.8}
\pgfmathsetmacro{\a}{1/(\T*cosh(\T))}

\pgfmathsetmacro{\l}{0.25}
\pgfmathsetmacro{\ra}{\l*\R*\a*cosh(\T/\l)}
\pgfmathsetmacro{\rb}{\R*\a*\T}
\pgfmathsetmacro{\r}{sqrt(\ra*\ra+\rb*\rb)}

\fill[gray!10!white] (\xa,0) circle (\r);
\fill[white] (\xa,0) circle (0.4*\r);

\fill[gray!10!white] (\xb,\r) arc (90:-90:\r);
\fill[white] (\xb,{0.4*\r}) arc (90:-90:{0.4*\r});

\fill[gray!10!white] (\xc+\r,0) arc (0:180:\r);
\fill[white] (\xc+0.4*\r,0) arc (0:180:0.4*\r);

% Case i
\begin{scope}[myBlue]
\draw[scale=1,domain=-\f*\T:\f*\T,smooth,variable=\s]  plot ({\xa+\l*\R*\a*cosh(\s/\l)},{\R*\a*\s});
\draw[scale=1,domain=-\f*\T:\f*\T,smooth,variable=\s]  plot ({\xa-\l*\R*\a*cosh(\s/\l)},{\R*\a*\s});
\coordinate(P)at({\xa+\l*\R*\a},{0});
\draw[dashed] (P) arc (0:180:{\l*\R*\a} and {0.5*\l*\R*\a});
\draw (P) arc (0:-180:{\l*\R*\a} and {0.5*\l*\R*\a});

\end{scope}

\draw[gray] (\xa,0) circle (\r);
\draw[gray] (\xa,0) circle (0.4*\r);

\node[blue] at (\xa+0.7*\r,0.35*\r) {$\ms$};
\fill (\xa, 0) circle [radius=0.06em] node[below=2pt] {$p$};
\node[gray!50!black] at (\xa-0.2*\r,0.7*\r) {$A(p,s,t)$};

% Case ii
\draw[thick] ({\xb},1.1*\r) -- ({\xb},-1.1*\r);

\begin{scope}[myBlue]
\draw[scale=1,domain=-\f*\T:\f*\T,smooth,variable=\s]  plot ({\xb+\l*\R*\a*cosh(\s/\l)},{\R*\a*\s});
\coordinate(P)at({\xb+\l*\R*\a},{0});
\draw[dashed] (P) arc (0:90:{\l*\R*\a} and {0.5*\l*\R*\a});
\draw (P) arc (0:-90:{\l*\R*\a} and {0.5*\l*\R*\a});
\end{scope}

\draw[gray] (\xb,\r) arc (90:-90:\r);
\draw[gray] (\xb,{0.4*\r}) arc (90:-90:{0.4*\r});

\node[blue] at (\xb+0.7*\r,0.35*\r) {$\ms$};
\fill (\xb, 0) circle [radius=0.06em] node[left=2pt] {$p$};
\node[gray!50!black] at (\xb+0.4*\r,-0.6*\r) {$A(p,s,t)$};

\node at ({\xb-0.25}, -\s) {\footnotesize $\partial \amb$};

% Case iii
\draw[thick] ({\xc-1.1*\r},0) -- ({\xc+1.1*\r},0);

\begin{scope}[myBlue]
\draw[scale=1,domain=0:\f*\T,smooth,variable=\s]  plot ({\xc-\l*\R*\a*cosh(\s/\l)},{\R*\a*\s});
\draw[scale=1,domain=0:\f*\T,smooth,variable=\s]  plot ({\xc+\l*\R*\a*cosh(\s/\l)},{\R*\a*\s});
\end{scope}

\draw[gray] (\xc+\r,0) arc (0:180:\r);
\draw[gray] (\xc+0.4*\r,0) arc (0:180:0.4*\r);

\node at ({\xc-0.7}, -0.15) {\footnotesize $\partial\amb$};

\node[blue] at (\xc+0.7*\r,0.35*\r) {$\ms$};
\fill (\xc, 0) circle [radius=0.06em] node[below=2pt] {$p$};
\node[gray!50!black] at (\xc-0.2*\r,0.7*\r) {$A(p,s,t)$};
\end{tikzpicture}
%%%%%%%%%%%%%%%%%%%%%%%%%%%%%%%%%%%%%%%%%%%%%%%%%%%%%
%%%%%%%%%%%%%%%%%%%%%%%%%%%%%%%%%%%%%%%%%%%%%%%%%%%%%
\caption{Three different prototypical cases of an annulus $\delta$-close to a cone (gray region in figure).} \label{fig:DefConicalAnnuli}
\end{figure}

\begin{definition}
Given a free boundary minimal hypersurface $\Sigma^n\subset (M^{n+1},g)$, $p\in M$ and $0<s\le t/2$, we say that $\Sigma\cap A(p,s,t)$ is \emph{$\delta$-close to a cone in $\mathcal{G}$} for some $\delta>0$ (see also \cref{fig:DefConicalAnnuli}) if: either $B(p,t)\cap \partial M=\emptyset$ and $\Sigma\cap A(p,s,t)$ is $\delta$-close to a cone in $\mathcal{G}$ as defined in \cite{Song2019}*{Section~2.2.3}, or $p\in\partial M$ and the following property holds.
For all $r\in[s,t/2]$ and for each component $\Sigma'$ of $\Sigma\cap A(p,r,2r)$, there is a diffeomorphism $\Phi\colon B(p,2r)\subset (M,g)\to B^{n+1}_{\mathrm{Eucl}}(0,2)\cap \R^{n+1}_+$ such that
\begin{itemize}
\item $\Phi^*(g)$ is $\delta$-close to $g_{\mathrm{Eucl}}$ in the $C^5$-topology;
\item $\Phi(A(p,r,2r)) = A_{\mathrm{Eucl}}(0,1,2)\cap \R^{n+1}_+$;
\item there is a (possibly nonproperly embedded) free boundary minimal cone $\Gamma\in\mathcal{G}$ such that $\Phi(\Sigma') = \Gamma\cap A_{\mathrm{Eucl}}(0,1,2)$.
\end{itemize}
\end{definition}

The following analogue of \cite{Song2019}*{Theorem 8} at the boundary holds.

\begin{theorem} \label{thm:BoundaryNonConcentration}
Let $3\le n+1\le 7$, and let $\bar\CA,\lambda,\bar r, \delta, K$ be constants, and set $\bar K \eqdef 30\lambda K^2$. Moreover, assume that $\bar r\ge 1$ and $\BO=\BO(n,\bar\CA)$ is given by \cref{lem:TwoBallsBadCones}.
Then, there exist $\BU=\BU(n,\bar\CA,\lambda, \delta, K)>0$, $\varepsilon=\varepsilon(n,\bar \CA,\lambda,\delta, K)>0$ and $\bar R = \bar R(n,\bar \CA,\lambda, \delta, K)>1000$ such that the following statement holds.

Let $g$ be a metric on $B_{\mathrm{Eucl}}^{n+1}(0,\bar K)\subset \R^{n+1}$ that is $\varepsilon$-close to the Euclidean metric in the $C^5$-topology. 
Let $\Sigma^n$ be a minimal hypersurface in $B_{\mathrm{Eucl}}^{n+1}(0,\bar K)$ or a free boundary minimal hypersurface in $B_{\mathrm{Eucl}}^{n+1}(0,\bar K)\cap \R^{n+1}_+$, with respect to $g$, such that $\partial\Sigma\setminus \partial \R^{n+1}_+ \subset \partial B_{\mathrm{Eucl}}^{n+1}(0,\bar K)$. Assume that 
\begin{enumerate}[label=\normalfont(\roman*)]
\item\label{bnc:ii} $s_\Sigma(0) \le 1/\bar K$;
\item\label{bnc:iii} there is $y'\in B(0,7\lambda)$ with $s_\Sigma(y') = 1$;
\item\label{bnc:i} the $n$-volume of $\Sigma$ is at most $\bar \CA\bar K^n/2$;
\item\label{bnc:iv} $\Theta_g^\Sigma(0,10\lambda K) - \Theta_g^\Sigma(0,1/(3K))\le\BU$.
\end{enumerate}
Then, we have that
\begin{enumerate} [label=\normalfont(\arabic*)]
\item \label{c1:pair-of-balls} either there are two stability balls $b,\, b'\subset B(0,\bar K)$ with the property that $3\lambda b\cap 3\lambda b'=\emptyset$ and $\radius(b),\radius(b') \in [2^{-(R+1)},2^{-R})$ for some $R\in[1000,\bar R]$;
\item \label{c2:conic-annulus} or $\Sigma\cap A(0,1/(2K), 7\lambda K)$ is $\delta$-close to a cone in $\mathcal{G}=\mathcal{G}(n,\bar \CA,\BO)$.
\end{enumerate}

\begin{case3}
When $n+1=3$, a stronger statement holds, analogously to \cite{Song2019}*{Theorem~14}. Roughly, it is not necessary to assume the bound on the area \ref{bnc:i} and the density estimate \ref{bnc:iv}. This is the main point where the proof for $n+1=3$ differs from the higher dimensional case, so let us state precisely the result.

\begin{theorem3}
Let $\lambda,\bar r,\delta, K$ be constants and assume that $\bar r\ge 1$.
Then, there exist $\bar K = \bar K(\lambda,\delta,K)\ge 30\lambda K^2$, $\varepsilon=\varepsilon(\lambda, \delta, K)>0$ and $\bar R = \bar R(\lambda, \delta, K)>1000$ such that the following statement holds.

Let $g$ be a metric on $B_{\mathrm{Eucl}}^{3}(0,\bar K)\subset \R^{3}$ that is $\varepsilon$-close to the Euclidean metric in the $C^5$-topology. 
Let $\Sigma^2$ be a minimal surface in $B_{\mathrm{Eucl}}^{3}(0,\bar K)$ or a free boundary minimal surface in $B_{\mathrm{Eucl}}^{3}(0,\bar K)\cap \R^{3}_+$, with respect to $g$, such that $\partial\Sigma\setminus \partial \R^{3}_+ \subset \partial B_{\mathrm{Eucl}}^{3}(0,\bar K)$. Assume that 
\begin{enumerate}[label=\normalfont(\roman*)]
\item\label{bnct:ii} $s_\Sigma(0) \le 1/\bar K$;
\item\label{bnct:iii} there is $y'\in B(0,7\lambda)$ with $s_\Sigma(y') = 1$.
\end{enumerate}
Then, we have that
\begin{enumerate} [label=\normalfont(\arabic*)]
\item \label{c1t:pair-of-balls} either there are two stability balls $b,\, b'\subset B(0,\bar K)$ such that $3\lambda b\cap 3\lambda b'=\emptyset$ and $\radius(b),\radius(b') \in [2^{-(R+1)},2^{-R})$ for some $R\in[1000,\bar R]$;
\item \label{c2t:conic-annulus} or $\Sigma\cap A(0,1/(2K), 7\lambda K)$ is $\delta$-close to a cone in $\mathcal{G}=\mathcal{G}(2)$.
\end{enumerate}
\end{theorem3}
\end{case3}
\end{theorem}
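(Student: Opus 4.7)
The plan is to argue by contradiction via a compactness argument, following the strategy of Song's \cite{Song2019}*{Theorem~8} but keeping track of the possible presence of the free boundary. Fix $n,\bar\CA,\lambda,\delta,K$, the associated $\bar K=30\lambda K^2$, and the constant $\BO=\BO(n,\bar\CA)$ given by \cref{lem:TwoBallsBadCones}. Assume that no admissible triple $(\BU,\varepsilon,\bar R)$ exists. Then one obtains sequences $\varepsilon_i\searrow 0$, $\BU_i\searrow 0$, and $\bar R_i\nearrow\infty$ together with metrics $g_i$ on $B_{\mathrm{Eucl}}^{n+1}(0,\bar K)$ with $\|g_i-g_{\mathrm{Eucl}}\|_{C^5}\le \varepsilon_i$, and (free boundary) minimal hypersurfaces $\Sigma_i$ satisfying hypotheses \ref{bnc:ii}--\ref{bnc:iv} (with $\BU_i$ in place of $\BU$) while failing both conclusions \ref{c1:pair-of-balls} and \ref{c2:conic-annulus}.

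The area bound \ref{bnc:i} together with standard free boundary compactness in dimensions $3\le n+1\le 7$ (\cite{GuangLiZhou2020,GuangWangZhou2021}) and the curvature estimates \cref{thm:curvature-estimates} yield a subsequential varifold limit $\Sigma_\infty$, which is a stationary integral varifold in $B_{\mathrm{Eucl}}^{n+1}(0,\bar K)$ (or in its intersection with $\R^{n+1}_+$ in the boundary case, since $g_i$ converges to the Euclidean metric so the supporting half-space is preserved). The density difference hypothesis \ref{bnc:iv} with $\BU_i\searrow 0$, combined with the rigidity case of the monotonicity formula \cref{thm:monotonicity}, forces $\Theta^{\Sigma_\infty}_{\mathrm{Eucl}}(0,r)$ to be constant for $r\in[1/(3K),10\lambda K]$, so that $\Sigma_\infty$ is a minimal cone tipped at $0$ on this annulus with $\Theta^{\Sigma_\infty}_{\mathrm{Eucl}}(0,1)\le\bar\CA$. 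Moreover, hypothesis \ref{bnc:iii} together with \cref{lem:BoundBelow} prevents $\Sigma_\infty$ from being trivial: the stability radius of $\Sigma_\infty$ at the limit point of $y_i'$ remains bounded below.

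The failure of \ref{c1:pair-of-balls} then pins down the cone inside $\mathcal{G}$. If one had $\max_{y\in\Sigma_\infty\cap\partial B^{n+1}_{\mathrm{Eucl}}(0,1)}\abs{A^{\Sigma_\infty}(y)}>\BO$, then \cref{lem:TwoBallsBadCones} would produce two Euclidean stability balls $b,b'\subset A_{\mathrm{Eucl}}(0,1/2,2)$ for $\Sigma_\infty$ with $4\lambda b\cap 4\lambda b'=\emptyset$ and radii below $2^{-2000}$. Smooth convergence of $\Sigma_i$ to $\Sigma_\infty$ away from $0$, together with the $(1/\lambda)$-Lipschitz continuity of the stability radius (\cref{lem:stab-continuous-positive}), would promote these to stability balls of $\Sigma_i$ with $3\lambda$-separated dilations and dyadic radii in some interval $[2^{-(R+1)},2^{-R})$ with $R\in[1000,\bar R_i]$, contradicting the failure of \ref{c1:pair-of-balls}. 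Hence $\Sigma_\infty\in\mathcal{G}(n,\bar\CA,\BO)$ and, being a cone, it is smooth on $A(0,1/(2K),7\lambda K)$. Allard-type regularity (with its free boundary counterpart \cite{GuangLiZhou2020}) upgrades the convergence to smooth convergence on this annulus, and on each dyadic subannulus $A(0,r,2r)$ one reads off the required diffeomorphism $\Phi$ exhibiting $\delta$-closeness for $i$ sufficiently large, contradicting the failure of \ref{c2:conic-annulus}.

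The main obstacle is to execute all three steps---compactness, passage to a cone, and smooth improvement---uniformly when centers and limit points lie on $\partial\R^{n+1}_+$, accommodating the degenerate possibility $\Sigma_\infty=\partial\R^{n+1}_+$ (the unique nonproperly embedded element we built into $\mathcal{G}$) and verifying that hypothesis \ref{bnc:iii} truly transfers to a nontrivial stability scale at the limit. In the case $n+1=3$, recorded in \emph{Theorem 7.5 for $n+1=3$}, \cref{lem:TwoBallsBadCones} is vacuous because minimal cones in $\R^3$ or $\R^3_+$ reduce to planes or half-planes, so $\mathcal{G}(2)$ is a compact finite-dimensional family. One can therefore drop hypotheses \ref{bnc:i} and \ref{bnc:iv} entirely by invoking the $\CA$-independent curvature estimates of \cite{Schoen1983Estimates} (and \cite{ChodoshLi2021} for the free boundary counterpart), which is exactly the simplification that ultimately allows $C_\CA=C\CA$ in \cref{thm:main}.
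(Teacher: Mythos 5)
Your overall strategy is the right one — contradiction by compactness, passage to a limit cone via the vanishing density drop, and ruling out cones with large curvature at the sphere via \cref{lem:TwoBallsBadCones} — and this matches the paper's approach. However, there are two genuine gaps in your execution.

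\textbf{Higher-dimensional case.} You invoke the curvature estimates \cref{thm:curvature-estimates} and an ``Allard-type regularity'' step to upgrade a varifold limit to a smooth one. Neither is available as you have set things up. The curvature estimates require stability, and you have not established that $\Sigma_i$ is stable at any definite scale in the annulus $A(0,1/(3K),10\lambda K)$ — the area bound \ref{bnc:i} alone gives no such thing. The paper's crucial intermediate step is the observation that the failure of conclusion \ref{c1:pair-of-balls}, combined with hypothesis \ref{bnc:iii}, forces the stability radius $s_{\Sigma_k}$ to be uniformly bounded below by $2^{-\bar R}$ on that whole annulus (this is the ``Fact'' from the proof of \cite{Song2019}*{Theorem 8}, cited there around equation (18)). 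Once you have that, the curvature estimates give uniformly bounded $|A^{\Sigma_k}|$ on the annulus and the compactness theorem \cite{GuangLiZhou2020}*{Theorem 6.1} directly gives \emph{smooth} subsequential convergence to a smooth limit $\Sigma_\infty$ — no detour through Allard. Your route cannot substitute Allard for this: the limit varifold may have density as large as $\bar\CA$, so the near-unit-density hypothesis of Allard regularity is not available, and without the uniform stability radius you cannot rule out that the varifold limit has a singular set in the annulus. You use the failure of \ref{c1:pair-of-balls} only at a later stage, to control the curvature of the limit cone at $\partial B(0,1)$; you are missing its earlier and more fundamental use.

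\textbf{The case $n+1=3$.} Your treatment is too thin and misses the actual mechanism. Dropping \ref{bnc:i} and \ref{bnc:iv} is not merely a matter of replacing the $\bar\CA$-dependent curvature estimates by the $\bar\CA$-independent ones of Schoen/Chodosh--Li: without an area bound, the compactness argument does not converge to a varifold at all, and without the density-drop hypothesis one cannot conclude the limit is a cone. The paper's proof in dimension three is structurally different: it derives a Colding--Minicozzi-type curvature decay $\sup_{y\in A_{g_k}(0,2^{-l},2^l)}|A^{\Sigma_k}(y)| \le C(\dist(0,y) + \dist(0,y)^{-1})$, extracts a limit free boundary minimal \emph{lamination} $\mathcal{L}$ in $\R^3_+\setminus\{0\}$ (\cite{CarlottoFranz2020}*{Theorem 3.5}), shows via \cite{CarlottoFranz2020}*{Lemma 3.7} that the leaf through $0$ has stable universal cover (the alternative — smooth multiplicity-one convergence — would force $s_L(0)=0$, contradicting \cref{lem:stab-continuous-positive}), concludes that leaf is a plane or half-plane, and then propagates flatness to all leaves by passing to the double. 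None of this appears in your sketch, and it cannot be replaced by the remark that $\mathcal{G}(2)$ is a simple family.
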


\begin{proof}
If $\Sigma$ is a minimal hypersurface in $B^{n+1}_{\mathrm{Eucl}}(0,\bar K)$, then the theorem coincides with \cite{Song2019}*{Theorem~8}. Hence, let us assume to be in the free boundary setting, namely to work in $B^{n+1}_{\mathrm{Eucl}}(0,\bar K)\cap \R^{n+1}_+$. The proof is very similar to the one in the closed case, we report it here for completeness. Suppose by contradiction that, for every $k$ sufficiently large, there exists a free boundary minimal hypersurface $\ms_k^n \subset (B_{\mathrm{Eucl}}^{n+1}(0,\bar K)\cap\R^{n+1}_+, g_k)$, where $g_k$ is a metric $(1/k)$-close to the Euclidean metric, such that \ref{bnc:ii}, \ref{bnc:iii} and \ref{bnc:i} hold, $\Theta^{\Sigma_k}_{g_k}(0,10\lambda K)- \Theta^{\Sigma_k}_{g_k}(0,1/(3K)) \leq 1/k$ and such that \ref{c1:pair-of-balls} and \ref{c2:conic-annulus} do not hold.

The fact that \ref{c1:pair-of-balls} does not hold implies that the stability radius of $\Sigma_k$ is uniformly bounded below by $2^{-\bar{R}}$ in the annulus $B_{g_k}(0,10\lambda K)\setminus B_{g_k}(0,1/(3K))$, provided we choose $\bar{R}=\bar R(\lambda, K) >1000$ sufficiently large (see \cite{Song2019}*{equation~(18)}). This uses \ref{bnc:iii} and can be proven in the exact same way as \cite{Song2019}*{Fact in the proof Theorem~8} except that we work in $\R^{n+1}_+$ instead of $\R^{n+1}$.

Thus, in $A_{g_k}(0,1/(3K),10\lambda K)$, $\ms_k$ have uniformly bounded second fundamental form by \cref{thm:curvature-estimates} and uniformly bounded area by \ref{bnc:i}. Hence, by the compactness theorem \cite{GuangLiZhou2020}*{Theorem~6.1}, up to subsequence $\Sigma_k$ converges to a smooth (possibly nonproperly embedded) free boundary minimal hypersurface $\ms_\infty\subseteq A_{\mathrm{Eucl}}(0,1/(3K),10\lambda K)\cap\R^{n+1}_+$, with respect to the Euclidean metric $g_{\mathrm{Eucl}}$. 
In particular, we have that \[
\Theta_{{\mathrm{Eucl}}}^{\Sigma_\infty}(0,10\lambda K)-\Theta_{{\mathrm{Eucl}}}^{\Sigma_\infty}(0,1/(3K))=0.\] Thus, by the rigidity part of the monotonicity formula \cref{thm:monotonicity}, the hypersurface $\ms_\infty$ coincides in $A_{\mathrm{Eucl}}(0,1/(3K),10\lambda K)\cap\R^{n+1}_+$ with a (possibly nonproperly embedded) free boundary minimal cone $\Gamma$ tipped at $0$. Moreover, since $\Sigma_k$ satisfies \ref{bnc:i}, we get that then $n$-volume of $\Sigma_k$ is at most $\bar \CA \bar K^n/2$, which implies that $\Theta^{\Gamma}_{{\mathrm{Eucl}}}(0,1)\le \bar \CA$.

In fact, we have that $\Gamma\in \mathcal{G}$ since we are assuming that \ref{c1:pair-of-balls} does not hold and, if $\Gamma\notin \mathcal{G}$, then thanks to \cref{lem:TwoBallsBadCones} we can find two balls as in \ref{c1:pair-of-balls} with respect to the metric $g_k$ for $k$ sufficiently large.
However, by the convergence, $\ms_k$ is $\delta$-close to the cone $\Gamma$ in $A_{g_k}(0,1/(2K),7\lambda K)$ for $k$ sufficiently large, contradicting \ref{c2:conic-annulus}.

As a result, we obtain the existence of $\Omega$, $\varepsilon$, $\bar R$ such that the theorem holds. Observe that, from the proof, it follows that these constants do not depend on the variable $\bar r$, as far as we assume (as in the statement) that $\bar r \ge 1$.
\end{proof}

\begin{proof} [Proof of \cref{thm:BoundaryNonConcentration} for $n+1=3$]
If $\Sigma$ is a minimal surface in $B^3_{\mathrm{Eucl}}(0,\bar K)$, then the theorem coincides with \cite{Song2019}*{Theorem~14}. Hence, let us assume to be in the free boundary setting. Assuming by contradiction that the statement does not hold, we get a sequence of free boundary minimal surfaces $\Sigma_k$ in $B^3_{\mathrm{Eucl}}(0, 2^k)\cap \R^3_+$, with respect to a metric $g_k$ converging to the Euclidean metric on compact sets, such that $s_{\ms_k}(0)\le 2^{-k}$, \ref{bnct:iii} holds, and \ref{c1t:pair-of-balls} and \ref{c2t:conic-annulus} do not hold.
Following exactly the same argument as in the closed case, we get that for all $k$ large enough and $l>0$ it holds
\[
\sup_{y\in A_{g_k}(0,2^{-l},2^l)} \abs{A^{\ms_k}(y)} \le C\left( \dist_{g_k}(0,y) + \dist_{g_k}(0,y)^{-1} \right),
\]
for some constant $C>0$ independent of $k,l$.
This implies that $\ms_k$ converges to a (possibly nonproperly embedded) free boundary minimal lamination $\mathcal{L}$ in $\R^3_+\setminus\{0\}$ (see \cite{CarlottoFranz2020}*{Definition~3.1} for the definition) by \cite{CarlottoFranz2020}*{Theorem~3.5} (see also \cite{GuangLiZhou2020}*{Theorem~5.5}).
Let $L$ be a leaf whose closure contains $0$ (which exists because $s_{\ms_k}(0)\le 2^{-k}$).
Then, by \cite{CarlottoFranz2020}*{Lemma~3.7}, $L$ either has stable universal cover, or the convergence of $\Sigma_k$ to $L$ is locally smooth with multiplicity one. Observe that the second possibility cannot occur, because $s_{\ms_k}(0)\le 2^{-k}$ would then imply that $s_L(0) = 0$, contradicting \cref{lem:stab-continuous-positive}.
Thus, $L$ has stable universal cover, which implies that its closure is a plane or a half-plane (for example by \cite{CarlottoFranz2020}*{Corollary~3.8}).
As a result, all the leaves of $\mathcal{L}$ are flat. To prove this, one can reason as in the closed case at the level of the double of the lamination $\mathcal{L}$ (see e.g. \cite{GuangLiZhou2020}*{Lemma~5.4} or proof of \cite{CarlottoFranz2020}*{Corollary~3.8}). This concludes the proof, since it contradicts that \ref{c2t:conic-annulus} does not hold.
\end{proof}

\subsection{Construction of the almost conical region}
Let us assume to be in the setting described in \cref{sec:choice-param}. Moreover, assume that $\BO=\BO(n,\bar\CA)> 0$ is the constant given by \cref{lem:TwoBallsBadCones} and that $\mathcal{G} = \mathcal{G}(n,\bar \CA,\BO) = \mathcal{G}(n,\bar \CA) = \mathcal{G}(M,g,\CA)$ is the set of cones defined in \cref{sec:choice-cones}.
Then, we can generalize the definitions of pointed $\delta$-conical annuli and of telescopes given by \cite{Song2019}*{Sections 2.2.3 and 2.2.4} to the free boundary setting.
\begin{definition}
Given $\delta>0$, we say that $A(p,s,t)$ is a \emph{pointed $\delta$-conical annulus} if:
\begin{itemize}
\item either $B(p,t)\cap\partial M=\emptyset$ or $p\in\partial M$;
\item $s_\ms(p)<s\le t/2\le \bar r/2$;
\item $\Sigma\cap A(p,s,t)$ is $\delta$-close to a cone in $\mathcal{G}$.
\end{itemize}

Moreover, given another parameter $K>1000+\lambda$, we define
\[
\mathcal{A}_{\delta,K}\eqdef \left\{A(p,s,2s)\st  A\left(p,\frac{s}{K}, Ks\right) \text{ is a pointed $\delta$-conical annulus}\right\}
\]
and
\[
\mathcal{A}_{\delta,K}^{\mathrm{bis}}\eqdef \left\{A(p,s,2s)\st A\left(p,\frac{100s}{K}, \frac{Ks}{100}\right) \text{ is a pointed $\delta$-conical annulus}\right\}.
\]

\end{definition}

\begin{definition}
We say that $T\subset M$ is a \emph{$(\delta,K)$-telescope} if there exist annuli $A(p_i,s_i,2s_i)\in\mathcal{A}_{\delta,K}^{\mathrm{bis}}$ for $i=1,\ldots, I$ such that
\[
T = \bigcup_{i=1}^I A(p_i,s_i,2s_i)
\]
and
$\overline{B}(p_i,s_i)\subset B(p_{i+1},s_{i+1})$, $\overline{B}(p_{i+1},s_{i+1})\subset B(p_{i},2s_{i})$, 
and $\overline{B}(p_i,2s_i)\subset B(p_{i+1},2s_{i+1})$ for all $i=1,\ldots,I-1$.
\end{definition}

The following proposition says that, by suitably choosing $\delta$ and $K$, we can cover the union of the annuli in $\mathcal{A}_{\delta,K}$ with a finite union of $(\delta,K)$-telescopes, for which we can well control the topology. The analogous result in the closed case can be found in \cite{Song2019}*{Lemma~7}.

\begin{proposition} \label{prop:ConicalRegion}

In the setting described in \cref{sec:choice-param}, there exist $\delta=\delta(M,g,\CA)>0$ and $K=K(M,g,\CA)>0$ such that the following statement holds.

There exists a finite family $\mathcal{T}$ of $(\delta,K)$-telescopes at positive distance from one another such that
\[
\bigcup_{An\in\mathcal{A}_{\delta,K}} An \subset \bigcup_{T\in\mathcal{T}} T.
\]
Moreover, we have that
\begin{enumerate} [label={\normalfont(\arabic*)}]
\item\label{cr:topology-telescopes} for any $T\in\mathcal{T}$, each connected component of $\Sigma\cap T$ are diffeomorphic to a cone in $\mathcal{G}$, and the sum of the Betti numbers of any connected component of $\Sigma\cap T$ and $\partial\Sigma \cap T$ is bounded above by a constant $\alpha=\alpha(M,g, \CA)$;

\item\label{cr:trivial-intersection} for any $T\in\mathcal{T}$ and any {stability balls $B_1,\ldots,B_J\in\mathcal{B}_\ms$, the connected components of 
\[
\Sigma\cap T\cap \bigcap_{j=1}^J B_j \quad \text{ and } \quad 
\partial\Sigma\cap T\cap \bigcap_{j=1}^J B_j
\]}
are topological $n$-discs and $(n-1)$-discs, respectively;
\item\label{cr:controlled-overlap} {for any $B\in\mathcal{B}_\ms$, there are at most two $(\delta,K)$-telescopes $T\in \mathcal{T}$ such that $B\cap T\not=\emptyset$;}
\item \label{cr:sheeting-telescopes} there exists $C_{CT}=C_{CT}(M,g,\CA)>0$ such that
\[
\abs{\{\text{components of }\Sigma\cap T\}}, \abs{\{\text{components of }\partial \Sigma\cap T\}}  \le C_{CT} \CA.
\]
\end{enumerate}
\end{proposition}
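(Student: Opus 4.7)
The approach follows Song's strategy in the closed case \cite{Song2019}*{Lemma~7}, adapted to handle boundary-centered annuli. The first task is to fix the parameters $\delta$ and $K$. I would choose $K=K(M,g,\CA)$ large enough so that one can apply \cref{thm:BoundaryNonConcentration} at every scale at which $\Sigma$ has a small stability radius, and choose $\delta=\delta(M,g,\CA)>0$ small enough that $\delta$-closeness to a cone $\Gamma\in\mathcal{G}$ on an annulus determines both the diffeomorphism type of the corresponding piece of $\Sigma$ and, via \cref{prop:write-as-graph}, the graphical nature of its components in stability balls. Note that at the relevant scales any annulus either avoids $\partial M$ or is centered at $\partial M$, so the dichotomy in our definition is preserved throughout the construction.

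\textbf{Building and separating telescopes.} Observe first that $\mathcal{A}_{\delta,K}\subset\mathcal{A}_{\delta,K}^{\mathrm{bis}}$, since the conical property on the outer annulus $A(p,s/K,Ks)$ restricts to the smaller annulus $A(p,100s/K,Ks/100)$. Given $An=A(p,s,2s)\in\mathcal{A}_{\delta,K}^{\mathrm{bis}}$, declare $An$ and $A(p',s',2s')\in\mathcal{A}_{\delta,K}^{\mathrm{bis}}$ chainable (assuming $s\le s'$) if $\overline{B}(p,s)\subset B(p',s')$, $\overline{B}(p',s')\subset B(p,2s)$, and $\overline{B}(p,2s)\subset B(p',2s')$. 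I would then form maximal chains under the transitive closure of this relation: each chain is finite, with length bounded by $\log_2(\bar r/s_{\min})$ since $s_{\ms}$ attains a positive minimum by \cref{lem:stab-continuous-positive}. If two chains meet, they share an annulus on which both are $\delta$-close to a common cone, so after thinning the union to a maximal nested sub-sequence one obtains a single telescope. Iterating this merging and invoking a Besicovitch-type argument on the base points (using \cref{thm:Besicovitch} together with the area upper bound) produces the finite family $\mathcal{T}$ of pairwise positive-distance telescopes whose union contains $\bigcup_{An\in\mathcal{A}_{\delta,K}}An$.

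\textbf{Verifying the properties.} For \ref{cr:topology-telescopes}, the $\delta$-closeness to a common cone $\Gamma\in\mathcal{G}$ across the nested annuli of $T$ yields a smooth retraction of each connected component of $\Sigma\cap T$ onto the punctured cone $\Gamma\setminus\{0\}$, so the component inherits the topology of $\Gamma$; the sum of its Betti numbers is bounded by the number of connected components of $\Gamma\cap S^n$, which is controlled via $\Theta^\Gamma_{\mathrm{Eucl}}(0,1)\le\bar\CA$ and the curvature bound $\BO=\BO(n,\bar\CA)$, hence by a constant $\alpha(M,g,\CA)$. Property \ref{cr:trivial-intersection} combines this cone description with \cref{prop:write-as-graph}\ref{wag:iii}: inside any finite intersection of stability balls the components of $\Sigma$ are graphical discs, and the cone structure on $T$ does not alter this. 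Property \ref{cr:controlled-overlap} follows because adjacent telescopes are at positive distance, whereas the radius of any $B\in\mathcal{B}_\ms$ is at most $\bar r$, small compared to the separation; a standard counting argument (as in \cref{lem:Besicovitch-type-lemma}\ref{btl:i}) caps the overlap at two. Finally, \ref{cr:sheeting-telescopes} is proved exactly as \cref{prop:sheeting-results}, applying the lower density estimate from \cref{cor:density-estimate} to each component of $\Sigma\cap T$ and the upper density estimate to $\Sigma$.

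\textbf{Main obstacle.} The hardest point will be the merging/separation step: one must ensure that the merged chain is still a $(\delta,K)$-telescope in the sense that successive annuli remain properly nested and $\delta$-close to a common cone, and that the iterative process terminates so that $\mathcal{T}$ is finite with positive pairwise distances. In the free boundary case, the additional subtlety is that a telescope can consist of annuli centered at boundary points and annuli centered at interior points, and during a merge one must verify that the constraint ``either $B(p_i,t)\cap\partial M=\emptyset$ or $p_i\in\partial M$'' propagates along each chain — this is where the tight choice of $K$ and the stability-based convergence in \cref{thm:BoundaryNonConcentration} are essential.
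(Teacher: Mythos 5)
Your outline gets the broad shape of Song's construction right, but it omits the two lemmas in the paper that do the actual work, and this shows up as concrete gaps rather than just missing citations. Specifically, nowhere do you invoke \cref{lem:ConicalStabBound} (which says that at the edges $\partial B(p,s)$ and $\partial B(p,t)$ of a pointed $\delta$-conical annulus the stability radius is \emph{comparable to the scale of the annulus}), nor \cref{lem:ScaleOfAnnuli} (which says that two annuli in $\mathcal{A}_{\delta,K}^{\mathrm{bis}}$ that come within distance $u/3$ of each other must have comparable radii, close centers, and — crucially for the free boundary adaptation — the \emph{same boundary/interior type}). These are exactly the replacements for Song's (9) and Lemma~5, and the paper flags them as what prevents boundary telescopes from interacting with interior ones.

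The clearest place this gap bites is property \ref{cr:controlled-overlap}. You argue that the overlap bound of two follows because "the radius of any $B\in\mathcal{B}_\ms$ is at most $\bar r$, small compared to the separation" plus "a standard counting argument (as in \cref{lem:Besicovitch-type-lemma}\ref{btl:i})." Neither half of this works: stability radii can be much smaller than $\bar r$, the separation between telescopes can also be arbitrarily small, and the Besicovitch-type lemma would give a bound like $C_{VB}$, not $2$. The actual mechanism is that a stability ball that enters a telescope $T$ across one of its inner/outer spherical boundaries has radius pinned (up to fixed factors) to the local scale of that boundary, by \cref{lem:ConicalStabBound}; combined with \cref{lem:ScaleOfAnnuli}, such a ball cannot reach a third telescope. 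Without \cref{lem:ConicalStabBound} there is no way to get $2$ rather than some large $C_{VB}$.

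A secondary gap is in the merging step. Your "chainable" relation and the claim "if two chains meet, they share an annulus on which both are $\delta$-close to a common cone, so after thinning ... one obtains a single telescope" is not justified: two nearby annuli need not share an annulus, and a priori they could be at wildly different scales or of mixed interior/boundary type. \Cref{lem:ScaleOfAnnuli} is precisely what rules this out — if two annuli in $\mathcal{A}_{\delta,K}^{\mathrm{bis}}$ are close (distance $\le u/3$), their radii differ by at most a factor $4$, their centers are within $u/K_2$, and both are of the same type — and this is what makes the merging terminate in a family of telescopes at positive distance. You correctly identify this boundary/interior propagation issue in your final paragraph, but leave it unresolved; in the paper it is resolved by \cref{lem:ScaleOfAnnuli}. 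Finally, for \ref{cr:topology-telescopes}, your "common cone $\Gamma$" phrasing should be tempered: the model cone can vary from annulus to annulus, and the finiteness of diffeomorphism types is extracted from compactness of $\mathcal{G}$ via \cite{GuangLiZhou2020}*{Theorem~6.1}, not from a single fixed $\Gamma$.
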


\begin{case3}
In the case $n+1=3$, exactly the same statement holds with the constants $\delta=\delta(M,g)>0$, $K=K(M,g)>0$, $\alpha =1$, $\bar \beta=\bar \beta(M,g)$, $C_{CT}=C_{CT}(M,g)>0$ not depending on $\CA$. The proof is exactly the same as in the higher dimensional case, using that $\mathcal{G}=\mathcal{G}(M,g)$ and $C_{CE}=C_{CE}(M,g)$ do not depend on $\CA$.
\end{case3}

The proof of the proposition relies on the following lemmas.

\begin{lemma} \label{lem:ConicalStabBound}
There exist $K_1 = K_1(M,g,\CA)>0$ and $\delta=\delta(M,g,\CA) > 0$ small enough such that, if $A(p,s,t)$ is a pointed $\delta$-conical annulus, then
\[
K_1t\le s_\ms(q)\le \left( 1-\frac 1\lambda \right)^{-1}\left(\frac t\lambda + s\right)
\]
for all $q\in M$ such that $B(q,s_\ms(q))\cap \partial B(p,t)\not=\emptyset$.
\end{lemma}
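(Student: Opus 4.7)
The upper bound is an immediate consequence of the $(1/\lambda)$-Lipschitz continuity of $s_\ms$ (\cref{lem:stab-continuous-positive}). Since $B(q, s_\ms(q))\cap\partial B(p, t)\neq\emptyset$, the triangle inequality gives $d(p, q)\le t + s_\ms(q)$, hence
\[
s_\ms(q) \le s_\ms(p) + \frac{d(p, q)}{\lambda} < s + \frac{t + s_\ms(q)}{\lambda},
\]
which rearranges to $s_\ms(q) \le (1 - 1/\lambda)^{-1}(t/\lambda + s)$.

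For the lower bound, the strategy is to use the conical structure on $A(p,s,t)$ to derive a curvature bound $|A^\ms|\le C/t$ in a neighborhood of $\partial B(p,t)$, and then conclude by contradiction via \cref{lem:BoundBelow} (which tells us that, where stability fails, there must be a point of large second fundamental form). Applying the defining chart of the pointed $\delta$-conical annulus at scale $r = t/2$ produces a diffeomorphism $\Phi\colon B(p, t)\to B^{n+1}_{\mathrm{Eucl}}(0, 2)\cap \R^{n+1}_+$ (or into $\R^{n+1}$ if $B(p,t)\cap\partial M=\emptyset$) that identifies $\Sigma\cap A(p, t/2, t)$ with $\Gamma\cap A_{\mathrm{Eucl}}(0, 1, 2)$ for some $\Gamma\in\mathcal{G}=\mathcal{G}(M, g, \CA)$, with $\Phi^*g$ being $\delta$-close to $g_{\mathrm{Eucl}}$ in $C^5$. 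Since by definition $|A^\Gamma|\le\BO$ on $\partial B^{n+1}_{\mathrm{Eucl}}(0,1)$, the cone-scaling $|A^\Gamma|\le\BO/\rho$ on $A_{\mathrm{Eucl}}(0,1,2)$ together with the $C^5$-closeness of $\Phi^*g$ to $g_{\mathrm{Eucl}}$ translates (for $\delta$ small) into the pointwise bound $|A^\ms|\le C\BO/t$ throughout $\Sigma\cap A(p, t/2, t)$, for a constant $C = C(n)$.

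Suppose now, toward a contradiction, that $s_\ms(q) < K_1 t$, where $K_1 = K_1(M,g,\CA)>0$ will be fixed at the end. Then $K_1 t \le \bar r$ (for $K_1\le 1$), so \cref{lem:BoundBelow} applied at $q$ yields a point $z\in B(q, 2\lambda s_\ms(q))\subset B(q, 2\lambda K_1 t)$ with $|A^\ms|(z) > \kappa/s_\ms(q) > \kappa/(K_1 t)$. The hypothesis $B(q, s_\ms(q))\cap\partial B(p,t)\neq\emptyset$ combined with $s_\ms(q)< K_1 t$ forces
\[
d(p, z)\in\bigl[t(1 - K_1(1+2\lambda)),\, t(1 + K_1(1+2\lambda))\bigr],
\]
so for $K_1$ small enough (depending on $\lambda$) the point $z$ lies in the conical sub-annulus $A(p, t/2, t)$ or within a controlled neighborhood just outside $\partial B(p,t)$. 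In the latter case the curvature bound extends via a standard Fermi-coordinate enlargement of the chart $\Phi$ and the $C^5$-regularity of $\Phi^*g$, as in the closed treatment of \cite{Song2019}*{Section~2.2}. This contradicts $|A^\ms|(z) > \kappa/(K_1 t)$ as soon as $K_1 < \kappa/(C\BO)$. Choosing $K_1 = \min\{\kappa/(2C\BO), 1/(4(1+2\lambda))\} = K_1(M, g, \CA)$ produces the required constant. The main technical point is the extension of the curvature bound slightly beyond $\partial B(p,t)$, necessary because the bad point provided by \cref{lem:BoundBelow} may fall marginally outside the conical annulus.
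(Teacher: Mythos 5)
Your upper bound is correct: it follows from the $(1/\lambda)$-Lipschitz property of $s_\ms$ together with $s_\ms(p)<s$, which is part of the definition of a pointed $\delta$-conical annulus. Your lower-bound strategy is also the intended one (the paper refers to Song's argument, which is exactly this: a curvature bound in the conical annulus contradicted against \cref{lem:BoundBelow}). However, you flag but do not resolve the central difficulty: the high-curvature point $z\in B(q,2\lambda s_\ms(q))$ supplied by \cref{lem:BoundBelow} may lie \emph{outside} $B(p,t)$. The proposed fix --- ``the curvature bound extends via a standard Fermi-coordinate enlargement of the chart $\Phi$'' --- does not work, because the $\delta$-conical condition constrains only $\Sigma\cap A(p,s,t)$ and says nothing about $\Sigma$ outside $B(p,t)$; the $C^5$-control on $\Phi^*g$ governs the ambient metric, not the hypersurface, whose second fundamental form outside $B(p,t)$ is a priori uncontrolled. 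Nor does the obvious Lipschitz workaround close the gap: if you instead prove $s_\ms(x)\ge K_1''t$ for $x$ at distance $\eta t$ inside $\partial B(p,t)$ (so that $z\in B(x,2\lambda s_\ms(x))\subset A(p,t/2,t)$ whenever $s_\ms(x)<K_1''t$), keeping $z$ inside forces $K_1''\le\eta/(2\lambda)$, whereas transporting the bound to $q$ via $s_\ms(q)\ge s_\ms(x)-\bigl(s_\ms(q)+\eta t\bigr)/\lambda$ gives a positive lower bound only when $K_1''>\eta/\lambda$; these two requirements are incompatible.

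What makes this harmless in practice is that \cref{lem:ConicalStabBound} is only invoked (in the proof of \cref{lem:ScaleOfAnnuli}) with a pointed $\delta$-conical annulus $A(p,100u/K,Ku/100)$ whose outer radius exceeds the relevant scale $t'\approx d(p,q)\lesssim u$ by the large factor $K$; at that scale the point $z$ is guaranteed to remain inside the conical annulus, and your contradiction argument goes through verbatim. Your proof is therefore essentially correct provided you argue at a radius well in the interior of the available conical annulus and exploit the built-in factor-$K$ margin of $\mathcal{A}_{\delta,K}$; as written, the assertion that the curvature bound extends past $\partial B(p,t)$ is a genuine gap, not a routine technicality.
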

\begin{proof}
The proof is exactly the same as the one of equation (9) in \cite{Song2019}*{Section~2.2.3}, given \cref{lem:BoundBelow} (in place of \cite{Song2019}*{(8)}).
\end{proof}

\begin{lemma} [cf. \cite{Song2019}*{Lemma 5}]
\label{lem:ScaleOfAnnuli}
For any $K_2>0$, we can choose $K=K(M,g,\CA, K_2)>0$ large enough such that the following property holds. Let $A(p,u,2u), A(q,v,2v)\in \mathcal{A}_{\delta,K}^{\mathrm{bis}}$ such that
\[
\operatorname{dist}(A(p,u,2u), A(q,v,2v)) \le \frac u3.
\]
Then, either $A(p,u,2u)$ and $A(q,v,2v)$ do not intersect the boundary $\partial M$, or $p,q\in\partial M$. Moreover,
\[
\dist(p,q) \le \frac{u}{K_2}
\quad \text{and} \quad \frac u4 < v < 4u.
\]
\end{lemma}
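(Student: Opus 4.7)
The plan is to follow closely the strategy of \cite{Song2019}*{Lemma~5} by contradiction and compactness, highlighting the modifications needed to handle the free boundary setting. Suppose the statement fails: there exist $K_2 > 0$, a sequence $K_i \to \infty$, and pairs of annuli $A(p_i,u_i,2u_i), A(q_i,v_i,2v_i) \in \mathcal{A}_{\delta,K_i}^{\mathrm{bis}}$ with $\dist(A(p_i,u_i,2u_i), A(q_i,v_i,2v_i)) \le u_i/3$, for which at least one of the three conclusions fails. By the triangle inequality, choosing $x_i \in A(p_i,u_i,2u_i)$ and $y_i \in A(q_i,v_i,2v_i)$ with $d(x_i,y_i)$ close to $u_i/3$, one gets $d(p_i,q_i) \le 7u_i/3 + 2v_i$.

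The first and main step is to prove the scale comparability $u_i/4 < v_i < 4u_i$ for $i$ large. After rescaling the ambient metric by $u_i^{-1}$ around $p_i$, so that $u_i = 1$, the hypersurfaces $\Sigma_i$ satisfy that $\Sigma_i \cap A(p_i, 100/K_i, K_i/100)$ and $\Sigma_i \cap A(q_i, 100 v_i/K_i, K_i v_i/100)$ are $\delta$-close to cones in $\mathcal{G}$. By the curvature estimates \cref{thm:curvature-estimates} combined with \cref{lem:ConicalStabBound}, the rescaled $\Sigma_i$ have uniformly bounded second fundamental form on any compact subset of $A(p_i, 100/K_i, K_i/100) \setminus B(q_i, 100v_i/K_i)$; together with the area bound we can apply the compactness theorem \cite{GuangLiZhou2020}*{Theorem~6.1} and extract a limit $\Sigma_\infty$ that coincides with a cone $\Gamma_p \in \mathcal{G}$ on the complement of $\{q_\infty\}$ (where $q_\infty = \lim q_i$ in the rescaled metric, possibly at infinity). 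If $v_i/u_i \to 0$, then $\Sigma_i$ near $q_i$ should also be $\delta$-close to a cone $\Gamma_q$ tipped at $q_i$ with small stability radius, while the limiting $\Gamma_p$ is smooth at $q_\infty$ with bounded second fundamental form, contradicting \cref{lem:BoundBelow} applied to the rescaled sequence. Symmetrically, if $v_i/u_i \to \infty$, one swaps the roles of $p$ and $q$ (after renormalising by $v_i^{-1}$) and derives the same type of contradiction from the rigidity part of \cref{thm:monotonicity}. In the free boundary case the limit cones may be half-cones in $\R^{n+1}_+$, but the same compactness and monotonicity results apply, so no essential change is required.

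The second step is to rule out the mixed interior/boundary configuration using the comparability just established. If $p_i \in \partial M$ while $B(q_i, K_i v_i/100) \cap \partial M = \emptyset$, then $d(q_i, \partial M) > K_i v_i/100$, hence
\[
\frac{K_i u_i}{400} < \frac{K_i v_i}{100} \le d(p_i,q_i) \le \frac{7u_i}{3} + 2v_i \le \frac{31 u_i}{3},
\]
which fails as soon as $K_i > 12400/3$. The symmetric situation (with $q_i \in \partial M$ and $B(p_i, K_i u_i/100) \cap \partial M = \emptyset$) is handled by the analogous inequality $K_i u_i/100 < 31 u_i/3$.

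Finally, to obtain the quantitative bound $d(p_i,q_i) \le u_i/K_2$, I would again rescale so that $u_i = 1$ and argue by contradiction/compactness: with the scales comparable and the interior/boundary dichotomy resolved, both pointed annuli converge in the limit to minimal cones (tipped at $p_\infty$ and $q_\infty$ respectively) which must agree on the large overlap region inside $A(p_\infty,2/3,7/3)$. The rigidity clause of \cref{thm:monotonicity} forces $p_\infty = q_\infty$, so $d(p_i,q_i)/u_i \to 0$ as $i \to \infty$, contradicting $d(p_i,q_i) > u_i/K_2$ for $K_i$ sufficiently large. The main obstacle is the first step, where the compactness argument has to rule out the two degenerate regimes $v_i/u_i \to 0$ and $v_i/u_i \to \infty$; the boundary subtleties reduce to invoking the free boundary versions of the compactness theorem and of the monotonicity formula, both already available in our setting.
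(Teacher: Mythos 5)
The paper's proof is short and entirely direct, with no compactness or blow-up: the single ingredient is \cref{lem:ConicalStabBound}, which gives a lower bound $s_\Sigma(q)\ge K_1\,t$ for any $q$ whose stability ball meets $\partial B(p,t)$, whenever $A(p,s,t)$ is a pointed $\delta$-conical annulus. If $v\le u/4$, one checks that $q$ lies at distance comparable to $u$ from $p$, applies \cref{lem:ConicalStabBound} to the appropriate sub-annulus of $A(p,100u/K,Ku/100)$ to get $s_\Sigma(q)\ge K_1u/6$, and contradicts the opposing bound $s_\Sigma(q)<100v/K$ (built into $A(q,v,2v)\in\mathcal{A}_{\delta,K}^{\mathrm{bis}}$) for $K$ large; $v<4u$ is symmetric, and $\dist(p,q)\le u/K_2$ is obtained by the same mechanism. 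Only \emph{then} does the boundary dichotomy follow from $\dist(q,\partial M)\le\dist(p,q)\le u/K_2\le 4v/K_2<Kv/100$. Your proposal replaces the first and third steps by compactness, which is substantially heavier (the entire point of \cref{lem:ConicalStabBound} is to avoid re-running a blow-up here). Your scale-comparability step can be pushed through — the $\delta$-conical structure around $p_i$ gives a uniform bound on $|A^{\Sigma_i}|$ near $q_i$ after $u_i^{-1}$-rescaling, while $s_{\Sigma_i}(q_i)<100v_i/K_i\to 0$ together with \cref{lem:BoundBelow} forces blow-up — and your reordered boundary dichotomy, which uses scale comparability instead of the distance bound and is quantitative, is correct.

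The genuine gap is in your argument for $\dist(p,q)\le u/K_2$. You claim that the rigidity clause of \cref{thm:monotonicity} forces $p_\infty=q_\infty$. That is not what the rigidity clause says (it upgrades a pinched density to the conclusion that the hypersurface \emph{is} a cone, tipped at the prescribed base point; it says nothing about two conical structures on the same set sharing a tip), and the desired conclusion is actually false at the level of cones: a hyperplane through the origin is a cone tipped at \emph{every} one of its points, so the two tips need not coincide in the limiting picture. Note also that $\delta$ is a fixed constant here — it is chosen in \cref{prop:ConicalRegion} and is not sent to $0$ along your sequence — so your limits are only $\delta$-close to cones, not exact cones with pinched density, and monotonicity rigidity is not directly applicable in any case. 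The way to close the gap is the same mechanism you already used for scale comparability: if $u_i/K_2<d(p_i,q_i)\le 31u_i/3$, then after $u_i^{-1}$-rescaling $q_i$ sits at a definite distance (between $1/K_2$ and $31/3$) from $p_i$, hence inside a region where the $\delta$-conical structure around $p_i$ bounds $|A^{\Sigma_i}|$ uniformly; this contradicts the curvature blow-up near $q_i$ forced by \cref{lem:BoundBelow} and $s_{\Sigma_i}(q_i)<400u_i/K_i\to 0$. That, of course, is exactly \cref{lem:ConicalStabBound}, so the direct argument is both shorter and avoids the pitfall.
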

\begin{proof}
Assume by contradiction that $v\le u/4$, then by assumption $B(q,v)\subset A(p,u/6,3u)$. As a result, since $A(p,100u/K, Ku/100)$ is a pointed $\delta$-conical annulus, by \cref{lem:ConicalStabBound} we get that $s_\ms(q) \ge K_1u/6$. However, this contradicts that $s_\ms(q) < 100v/K$ (which follows by the fact that $A(q,100v/K, Kv/100)$ is a pointed $\delta$-conical annulus) for $K$ sufficiently large depending on $K_1$. Therefore we get that $v>u/4$. Analogously, one can prove that $v < 4u$. Moreover, given a constant $K_2>1$, with a similar argument, one can prove that $K$ can be also chosen sufficiently large, depending also on $K_2$, such that  $\dist(p,q)\le u/K_2$.

Now, assume that $p\in\partial M$. Then
\[\dist(q,\partial M) \le \dist(p,q) \le \frac{u}{K_2}\le \frac{4v}{K_2} < \frac{Kv}{100}.\] 
Therefore, $A(q,100v/K, Kv/100)$ intersects $\partial M$ and thus it is a pointed $\delta$-conical (half-)annulus and $q\in \partial M$, since $\delta$-conical annuli do not intersect the boundary. Analogously once can prove that, if $q\in \partial M$, then $p\in\partial M$.
\end{proof}

\begin{proof} [Proof of \cref{prop:ConicalRegion}]
    The proof is the same as in \cite{Song2019}*{Lemma~7} using \cref{lem:ScaleOfAnnuli} instead of \cite{Song2019}*{Lemma~5} since this prevents telescopes of boundary type to interact with interior ones. The additional properties \ref{cr:trivial-intersection} and \ref{cr:controlled-overlap} can be proven in a similar way to how \cite{Song2019}*{Lemma~7~(ii) and (iv)} are proven, given \cref{prop:write-as-graph} in place of \cite{Song2019}*{(6)}, \cref{lem:ConicalStabBound} in place of \cite{Song2019}*{(9)} and \cref{lem:ScaleOfAnnuli} in place of \cite{Song2019}*{Lemma~5}.
    Property~\ref{cr:sheeting-telescopes} is proven similarly to \cref{prop:sheeting-results}, see also \cite{Song2019}*{paragraph after (20)}.
    To prove \ref{cr:topology-telescopes} one can argue as in \cite{Song2019}*{Lemma~7~(i)} to show that the diffeomorphism type of a connected component of $T\cap\ms$ corresponds to a diffeomorphism type of a cone in $\mathcal{G}$. Since those cones have bounded area and curvature, the compactness theorem \cite{GuangLiZhou2020}*{Theorem~6.1} implies there is only finitely many such diffeomorphism types.
\end{proof}

Thanks to the previous result, we now have all the tools to define the almost conical region.

\begin{definition} \label{def:conical}
In the setting of \cref{sec:choice-param}, and let $\delta=\delta(M,g,\CA)>0$ and $K=K(M,g,\CA)>0$ given by \cref{prop:ConicalRegion}.
Then, we define the \emph{almost conical region} $\conical$ associated to $\Sigma$ as
\[
\conical \eqdef \bigcup_{T\in\mathcal{T}} T,
\]
where $\mathcal{T}$ is the family of telescopes given by \cref{prop:ConicalRegion}.
We call the complement $M\setminus\conical$ the \emph{concentration region} associated to $\Sigma$.

\begin{case3}
Again, when $n+1=3$, $\delta=\delta(M,g)$ and $K=K(M,g)$ do not depend on the area bound $\CA$.
\end{case3}
\end{definition}

\begin{remark}
In \cite{Song2019}, when $n+1=3$, the almost conical region is called \emph{almost flat region} since the minimal two-dimensional cones in $\R^3$ are flat. Here, we keep the same notation in the case $n+1=3$ since we treat this case alongside the higher-dimensional setting.
\end{remark}

\begin{remark}
Observe that the almost conical region is not uniquely defined, since it depends on the telescopes chosen in \cref{prop:ConicalRegion} (see also the comment after \cite{Song2019}*{Definition 2.2}).
\end{remark}

\section{The concentration region} \label{sec:concentration-region}

Assume to be in the setting of \cref{sec:choice-param}, and assume that the almost conical region $\conical$ is defined as in \cref{def:conical}. The goal of this section is to prove that the topology of the concentration region $M\setminus\conical$ is controlled by the index of the free boundary minimal hypersurface $\Sigma$. More precisely, we will show in \cref{thm:DisjointIndex} that we can bound from above the cardinality of a disjoint family of stability balls centered in the concentration region by the index of $\Sigma$.

\subsection{Dichotomy property in the concentration region}

The idea why we can control the size of a disjoint family of stability balls centered in the concentration region is that there cannot be a chain of stability balls intersecting each other and becoming smaller and smaller, or, if there is, we can ``substitute'' this chain with two stability balls that are sufficiently disjoint.
The following lemma is a precise statement of what we roughly described in words.

\begin{lemma} \label{lem:caseA3}
Let us assume to be in the setting described in \cref{sec:choice-param}. Moreover, let us assume that the almost conical region $\conical$ is defined as in \cref{def:conical}. Then, there exist $m=m(M,g,\CA)\in\N$ and $\bar R=\bar R(M,g,\CA)>0$ sufficiently large such that the following statement holds.
Let $B,b_1,\ldots,b_m$ be stability balls such that 
\begin{enumerate} [label={\normalfont(\roman*)}]
\item $b_i$ is centered in $M\setminus\conical$ for all $i=1,\ldots, m$;
\item \label{ca3:decay-radius} $\radius(b_i) <\radius(b_{i-1})/2$ for all $i=2,\ldots, m$ and $\radius(b_1)<\radius(B)/2$;
\item $6\lambda b_i\cap 3\lambda b_m\not=\emptyset$ for all $i=1,\ldots,m-1$;
\item $3\lambda b_i\cap 1.1\lambda B\not=\emptyset$ for all $i=1,\ldots,m$.
\end{enumerate}
Then, there are two stability balls $b_B,b_B'$ such that
\begin{enumerate}[label={\normalfont(\arabic*)}]
\item $\radius(b_B)=\radius(b_B') \ge 2^{-\bar R}\radius(b_m)$;
\item $3\lambda b_B\cap 3\lambda b_B'=\emptyset$;
\item \label{ca3:InB} $3\lambda b_B\cup 3\lambda b_B'\subset 2\lambda B$.
\end{enumerate}

\begin{case3}
When $n+1=3$, the same statement holds with $m=m(M,g)$ and $\bar R=\bar R(M,g)$ that do not depend on $\CA$.
\end{case3}
\end{lemma}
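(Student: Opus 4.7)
The strategy is to apply \cref{thm:BoundaryNonConcentration} to $\Sigma$ at $p_m$ (the center of the smallest ball $b_m$ in the chain) after rescaling, and then to show that conclusion~(2) of that theorem is ruled out by hypothesis~(i) of the present lemma; conclusion~(1) will directly supply the balls $b_B,b_B'$.

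\textbf{Setup.} Rescale the metric around $p_m$ by $\rho=1/\radius(b_1)$, setting $g'=\rho^2 g$. In the rescaled metric, $s_\Sigma^{g'}(p_1)=1$ and $s_\Sigma^{g'}(p_m)=\radius(b_m)/\radius(b_1)<2^{-(m-1)}$ by hypothesis~(ii). By hypothesis~(iii), $\dist^{g'}(p_m,p_1)\le 6\lambda+3\lambda\cdot 2^{-(m-1)}$, which is strictly less than $7\lambda$ once $m$ is moderately large. Choosing $m\ge 1+\log_2\bar K$ (with $\bar K$ the constant from \cref{thm:BoundaryNonConcentration}) verifies hypotheses~(i)--(ii) of that theorem with $y'=p_1$. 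The area bound (iii) of the theorem is provided by \cref{cor:density-estimate}, after making sure (as part of the choice of $\bar r$ in \cref{sec:choice-param}) that $\bar K\bar r$ falls inside the range of validity. The density-variation bound~(iv) is obtained by a pigeonhole on dyadic scales: the monotonicity formula \cref{thm:monotonicity} caps the total variation of $e^{C_{MF}r}\Theta^{\Sigma}_g(p_m,r)$ on $r\in[\radius(b_m),\radius(b_1)]$ by a constant depending only on $\bar\CA$, so by choosing $m$ large enough (depending on $\Omega$) there exists a dyadic subscale at which the variation falls below $\Omega$. We apply \cref{thm:BoundaryNonConcentration} at this subscale; for notational simplicity we may assume the favorable scale is $\rho=1/\radius(b_1)$ itself, at the price of replacing $m$ by a slightly larger threshold.

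\textbf{Ruling out conclusion~(2).} Suppose for contradiction that $\Sigma\cap A^{g'}(0,1/(2K),7\lambda K)$ is $\delta$-close to a cone in $\mathcal{G}$. Back in the original metric this reads $\Sigma\cap A^g(p_m,\radius(b_1)/(2K),7\lambda K\radius(b_1))$. Set $s\eqdef \tfrac{3}{4}\dist^g(p_m,p_1)$. Lipschitz continuity of $s_\Sigma$ (cf.\ \cref{lem:stab-continuous-positive}) and hypothesis~(ii) give $\dist^g(p_m,p_1)\ge\lambda(\radius(b_1)-\radius(b_m))\ge\lambda\radius(b_1)/2$, while hypothesis~(iii) yields $\dist^g(p_m,p_1)\le 9\lambda\radius(b_1)$. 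Therefore $s\in[\radius(b_1)/2,\,7\lambda\radius(b_1)]$, so $A^g(p_m,s/K,Ks)$ is a pointed $\delta$-conical annulus in the sense of \cref{sec:conical-region}, and hence $A^g(p_m,s,2s)\in\mathcal{A}_{\delta,K}\subset\conical$ by \cref{prop:ConicalRegion}. Since $s<\dist^g(p_m,p_1)<2s$ by construction, we have $p_1\in A^g(p_m,s,2s)\subset\conical$, contradicting hypothesis~(i) that $b_1$ is centered in $M\setminus\conical$.

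\textbf{Extracting $b_B,b_B'$ from conclusion~(1).} Conclusion~(1) of \cref{thm:BoundaryNonConcentration} yields two stability balls $b,b'\subset B^{g'}(0,\bar K)$ with $3\lambda b\cap 3\lambda b'=\emptyset$ and $\radius^{g'}(b),\radius^{g'}(b')\in[2^{-(R+1)},2^{-R})$ for some $R\in[1000,\bar R]$. Setting $b_B\eqdef b$ and $b_B'\eqdef b'$ and translating back to the original metric gives $\radius(b_B)=\radius(b_B')\ge 2^{-(R+1)}\radius(b_1)\ge 2^{-(\bar R+1)}\radius(b_m)$, so conclusions~(1) and~(2) of the lemma follow after replacing $\bar R$ by $\bar R+1$. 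For the containment $3\lambda b_B\cup 3\lambda b_B'\subset 2\lambda B$ we combine two facts: by hypothesis~(iv) with $i=m$, $\dist^g(p_m,p_B)\le 3\lambda\radius(b_m)+1.1\lambda\radius(B)$, which is strictly below $1.2\lambda\radius(B)$ for $m$ large; and the balls $b,b'$ live at scales $\le 2^{-R}\radius(b_1)$ inside a region of $g$-size $\bar K\radius(b_1)$ around $p_m$, which is $\le 0.8\lambda\radius(B)$ provided $\bar r$ was chosen small enough in \cref{sec:choice-param} so that $\bar K\bar r\le 0.4\lambda\radius(B)/\radius(b_1)\cdot\bar r$ is enforced (a condition one arranges by an additional a~priori smallness requirement on $\bar r$ relative to $\bar K$). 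The triangle inequality then gives the containment.

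\textbf{Main obstacle and the case $n+1=3$.} The hardest part is verifying the density-variation hypothesis~(iv) of \cref{thm:BoundaryNonConcentration}: this is where the dyadic pigeonhole forces $m$ to depend on $\bar\CA$ (through $\Omega$), and where the bookkeeping between the two scales ($\radius(b_1)$ and $\radius(B)$) becomes delicate. When $n+1=3$, the stronger version of \cref{thm:BoundaryNonConcentration} stated in the blue box is available: it has no area or density hypothesis, so the pigeonhole argument is unnecessary and $m=m(M,g)$ and $\bar R=\bar R(M,g)$ become independent of $\CA$; all other steps are formally identical.
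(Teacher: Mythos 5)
Your high-level plan (apply \cref{thm:BoundaryNonConcentration} at a scale selected by a monotonicity-formula pigeonhole, rule out the conical alternative using hypothesis~(i), and harvest the two disjoint balls from the other alternative) is the right idea, and it matches the paper's strategy. But there is a genuine gap in how you set up the application of \cref{thm:BoundaryNonConcentration}, and it is exactly the point that makes the free boundary case harder than the closed one.

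You propose to rescale around $p_m$, the center of $b_m$. The problem is that \cref{thm:BoundaryNonConcentration} is stated for a minimal hypersurface in a full Euclidean ball $B^{n+1}_{\mathrm{Eucl}}(0,\bar K)$ (interior model), or for a free boundary minimal hypersurface in $B^{n+1}_{\mathrm{Eucl}}(0,\bar K)\cap\R^{n+1}_+$ with the center $0$ lying \emph{on} $\partial\R^{n+1}_+$ (boundary model). If $p_m$ is neither far from $\partial M$ nor on $\partial M$ — say at distance comparable to a few times $\radius(b_m)$ from the boundary — then after rescaling the ball $B(p_m,\bar K\radius(b_1))$ intersects $\partial M$ but $p_m\notin\partial M$, which fits neither model. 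The paper resolves this by a case split: if $d(p_m,\partial M)$ is large relative to the pigeonhole scale one proceeds as in the closed case, and otherwise one replaces $p_m$ by its projection $\bar x$ onto $\partial M$ and recenters the whole construction there (re-estimating the stability radius of $\bar x$, the distance from $\bar x$ to the center of $b_i$, etc.). Your argument has no such case distinction, and the step where you rule out conclusion~(2) inherits the same defect: you need $A(p_m,s/K,Ks)$ to be a pointed $\delta$-conical annulus, but the definition of such an annulus requires either $B(p_m,Ks)\cap\partial M=\emptyset$ or $p_m\in\partial M$, which you never verify.

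A secondary but real issue is the remark ``for notational simplicity we may assume the favorable scale is $\rho=1/\radius(b_1)$ itself.'' This is not merely notational: the pigeonhole selects a scale $r_i$ with $i$ potentially far from $1$, and the point playing the role of $y'$ in hypothesis~(iii) of \cref{thm:BoundaryNonConcentration} must have rescaled stability radius \emph{exactly} $1$, so it must be the center of $b_i$, not $p_1$. If you insist on applying the theorem at scale $r_1$ you cannot control hypothesis~(iv), and if you apply it at scale $r_i$ you must re-verify all the quantitative hypotheses there. Relatedly, your containment argument for conclusion~\ref{ca3:InB} only closes if you work at scale $r_i$ (so that $\bar K r_i\le\radius(B)$ after strengthening hypothesis~\ref{ca3:decay-radius} to a factor $\bar K$ decay, as the paper does); at scale $r_1$ the required bound $\bar K\radius(b_1)\lesssim\lambda\radius(B)$ is not implied by $\radius(b_1)<\radius(B)/2$ since $\bar K=30\lambda K^2\gg\lambda$, and the attempt to salvage this by an a priori smallness assumption on $\bar r$ cannot work because the ratio $\radius(B)/\radius(b_1)$ is not controlled by $\bar r$.
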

\begin{proof}
Set $\bar K$ as in the proof of \cref{thm:BoundaryNonConcentration}.
We first note that it is sufficient to prove the lemma with the assumption~\ref{ca3:decay-radius} replaced by
\begin{enumerate} [label={\normalfont(\roman*')}, start=2]
    \item\label{ca3:decay-radius-new} $\radius(b_i) <\radius(b_{i-1})/\bar{K}$ for all $i=2,\ldots, m$ and $\radius(b_1)<\radius(B)/\bar{K}$.
\end{enumerate}
Indeed, if we prove the existence of $m,\bar R$ satisfying the statement with \ref{ca3:decay-radius} replaced by \ref{ca3:decay-radius-new}, then we get the original statement by choosing constants $m\log_2(\bar K),\bar R$.
Moreover, by discarding some of the balls, thus choosing $m$ possibly larger depending on $M,g,\CA$, we can assume that at scale $\radius(b_1)$ the metric $g$ is $\varepsilon$-close to the Euclidean metric in the $C^5$-topology, where $\varepsilon$ is given by \cref{thm:BoundaryNonConcentration}.

Now, let $x$ be the center of $b_m$ and let $m'=\lfloor (m-1)/2 \rfloor$. 
Moreover, let us denote by $r_i \eqdef \radius(b_i)$ the radius of $b_i$ for all $i=1,\ldots,m$. 
We distinguish two cases:
\begin{enumerate} [label={\normalfont(\Alph*)}]
\item\label{c1:interior} $d(x,\partial M) > \bar K r_{m'}$;
\item\label{c2:boundary} $d(x,\partial M) \le \bar K r_{m'}$.
\end{enumerate}
Observe that \ref{c1:interior} implies that $b(x,\bar K r_m)\subset b(x,\bar Kr_{m-1})\subset\ldots \subset b(x,\bar K r_{m'})$ do not intersect $\partial M$. Then, this case follows essentially as in \cite{Song2019}*{Proof of Proposition~10, Case~A.3} and it is simpler than \ref{c2:boundary}, thus we leave the details of this case to the reader.

Hence, let us assume that \ref{c2:boundary} holds. Let $\bar x\in\partial M$ be the projection of $x$ onto $\partial M$, namely the point in $\partial M$ closest to $x$.
We first show that, by choosing $m'=m'(M,g,\CA)>0$ sufficiently large, we can assume that there exists $i=1,\ldots,m'-2$ such that
\begin{equation} \label{eq:density-drop}
\Theta(\bar x, 10\lambda K r_i) - \Theta\left(\bar x, \frac{r_i}{3K} \right) < \BU,
\end{equation}
where $\BU>0$ is given by \cref{thm:BoundaryNonConcentration}.
If $\Theta(\bar x, 10\lambda K r_i) - \Theta\left(\bar x, {r_i}/{(3K)} \right)<0$ for some $i=1,\ldots,m'-2$, then the previous inequality holds trivially for such $i$. Otherwise, if $\Theta(\bar x, 10\lambda K r_i) - \Theta\left(\bar x, {r_i}/{(3K)} \right)>0$ for all $i=1,\ldots,m'-2$, then, we have that
\begin{align*}
\sum_{i=1}^{m'-2} \Theta(\bar x, 10\lambda K r_i) - \Theta\left(\bar x, \frac{r_i}{3K} \right) &\le\sum_{i=1}^{m'-2} e^{C_{MF}r_i/(3\lambda)}\left(\Theta(\bar x, 10\lambda K r_i) - \Theta\left(\bar x, \frac{r_i}{3K} \right) \right)\\
&\le  \sum_{i=1}^{m'-2} e^{C_{MF}10\lambda r_i}\Theta(\bar x, 10\lambda K r_i) - e^{C_{MF}r_i/(3\lambda)}\Theta\left(\bar x, \frac{r_i}{3K} \right)\\
&\le e^{C_{MF}10\lambda r_1}\Theta(\bar x, 10\lambda K r_1) - e^{C_{MF}r_{m'-2}/(3\lambda)}\Theta\left(\bar x, \frac{r_{m'-2}}{3K} \right)\\
&\le e^{C_{MF} 10\lambda \bar r} C_{DE} \Haus^n(\Sigma),
\end{align*}
where we used \cref{thm:monotonicity} and \cref{cor:density-estimate} in the last two inequalities, respectively.
Therefore, there exists $i\in\{1,\ldots,m'-2\}$ such that
\begin{equation}\label{eq:DensityDrop}
\Theta(\bar x, 10\lambda K r_i) - \Theta\left(\bar x, \frac{r_i}{3K} \right) \le \frac{e^{C_{MF} 10\lambda \bar r} C_{DE} \Haus^n(\Sigma)}{m'-2},
\end{equation}
which is smaller than $\BU$ by choosing $m'$ large enough depending on $M,g,\CA$.

By \cref{lem:stab-continuous-positive}, observe that
\begin{equation} \label{eq:stab-small}
\begin{split}
s_\Sigma(\bar x) &\le s_\Sigma (x) + \frac{1}{\lambda} d(x,\bar x) \le r_m + \frac{\bar K}\lambda r_{m'} < \left(\frac 1{\bar K^2} + \frac 1{\lambda\bar K}\right) r_{m'-2}\le \frac{r_{m'-2}}{\bar K}  \le \frac{r_i}{\bar K}.
\end{split}
\end{equation}
Moreover, denoting by $y$ the center of $b_i$, it holds that
\begin{equation} \label{eq:dist-small}
\begin{split}
d(\bar x, y) &\le d(\bar x, x) + d(x, y) \le \bar K r_{m'} + (6\lambda r_i + 3\lambda r_m)\le \left(6\lambda + \frac 1{\bar K} + \frac{3\lambda}{\bar K^2}\right) r_i \le 7\lambda r_i.
\end{split}
\end{equation}
Finally, note that $\bar K r_i \le \radius(B) \le \bar r$. Therefore, by \cref{cor:density-estimate}, we have that 
\begin{equation} \label{eq:area-bound}
\Haus^n(\Sigma\cap B(\bar x, \bar K r_i) )\le \omega_n C_{DE} \CA \bar K^n r_i^n=\frac{\bar \CA}{2} \bar K^nr_i^n.
\end{equation}

Recall that we chose $m$ sufficiently large such that at scale $r_1$, and so also at scale $r_i\le r_1$, the metric $g$ is $\varepsilon$-close to the Euclidean metric in the $C^5$-topology, where $\varepsilon$ is given by \cref{thm:BoundaryNonConcentration}. Namely, the $r_i^{-1}$ dilation of the metric $g$ on $B(\bar x, \bar Kr_i)$ is $\varepsilon$-close to the Euclidean metric in the $C^5$-topology.
Moreover, note that by \eqref{eq:density-drop}, \eqref{eq:stab-small}, \eqref{eq:dist-small}, and \eqref{eq:area-bound}, all the assumptions of \cref{thm:BoundaryNonConcentration} are satisfied in the $r_i^{-1}$-dilation of $B(\bar x,\bar K r_i)$ by identifying $\bar x$ with $0$, and $y$ with $y'$. Therefore, we get that 
\begin{enumerate} [label=\normalfont(\roman*)]
\item\label{cat:twoballs} either there are two stability balls $b_B,\, b_{B'}$ with $b_B,b_B'\subset B(\bar x,\bar K r_i)$, $3\lambda b_B\cap 3\lambda b_{B'}=\emptyset$ and $\radius(b_B)=\radius(b_B') \in [2^{-(R+1)}r_i,2^{-R}r_i)$ for some $R\in[1000,\bar R]$, where the constant $\bar R=\bar R(M,g,\CA,\lambda)>0$ is given by the theorem;
\item\label{cat:conical} or $\Sigma\cap A(\bar x,r_i/(2K), 7\lambda Kr_i)$ is $\delta$-close to a cone in $\mathcal{G}$.
\end{enumerate}
If case \ref{cat:conical} occurs, then $A(\bar x, r_i/2, 7\lambda r_i)\in\mathcal{A}_{\delta,K}$ by definition and therefore $A(\bar x, r_i/2, 7\lambda r_i)\subset\conical$ by \cref{prop:ConicalRegion}. However, this contradicts the fact that $y\not\in\conical$, since $y\in A(\bar x, r_i/2, 7\lambda r_i)$ thanks to \eqref{eq:dist-small} and
\[
d(\bar x, y) \ge \lambda (s_\Sigma(y) - s_\Sigma(\bar x)) \ge \lambda r_i\left(1 - \frac{1}{\bar K}\right)  \ge \frac{r_i}{2}.
\]

As a result, \ref{cat:twoballs} holds and we obtain two stability balls $b_B,b_{B'}$ such that $3\lambda b_B\cap 3\lambda b_{B}'=\emptyset$, $3\lambda b_B\cup 3\lambda b_{B}'\subset B(\bar x,\bar Kr_i)$, and \[\radius(b_B)=\radius(b_B') \ge 2^{-\bar R-1}r_i \ge 2^{-\bar R} r_m = 2^{-\bar R}\radius(b_m).\] To conclude the proof, we then just need to check \ref{ca3:InB}.
By using that $3\lambda b_m\cap1.1\lambda B \not=\emptyset$ we get that, if $z\in3\lambda b_B\cup 3\lambda b_B'\subset B(\bar x,\bar Kr_i)$ and $w$ is the center of $B$, then
    \begin{align*}
        d(z,w) 
        &\leq  \bar Kr_i +d(\bar x, x) + d(x,w)
        \le \bar K r_i + \bar K r_{m'} + (3\lambda r_m + 1.1\lambda \radius(B))\\
        &\le (2 + 3\lambda / \bar K + 1.1\lambda)\radius(B) \le 2\lambda \radius(B),
    \end{align*}
    which concludes the proof.
\end{proof}

\begin{proof} [Proof of \cref{lem:caseA3} for $n+1=3$]
The proof is simpler than the higher dimensional case. Indeed, it works the same but we do not have to ensure the density estimate \eqref{eq:DensityDrop} and the area bound \eqref{eq:area-bound} to apply \cref{thm:BoundaryNonConcentration} in dimension $n+1=3$. As a result, one can check that the proof goes through without the dependence on $\CA$.
\end{proof}

Thanks to the previous lemma, we are now able to show that any suitable (i.e., layered $3\lambda$-disjoint) cover of the concentration region by stability balls satisfies a dichotomy property.
Let us first introduce this property, called property \PJ{}, and then prove the result, which is the analogue of \cite{Song2019}*{Proposition~10} in the free boundary setting.

\begin{definition}
Let $J>0$ be a positive integer, let $\mathcal{B}$ be a layered $3\lambda$-disjoint family of stability balls and let $B\in\mathcal{B}_k$ for some $k\ge 0$.
We say that $(\mathcal{B}, B)$ satisfies property \PJ{} if one of the following two cases occurs:
\begin{enumerate} [label={\normalfont(\arabic*)}]
\item \label{pj:intersection} either the size of the set
\[
\{\hat b\in \mathcal{B}_{k+u}\st u\ge 0,\ 3\lambda \hat b\cap \lambda B \not=\emptyset\}
\]
is bounded above by $J$;
\item \label{pj:twoballs} or there are $v=v(B) > 1000$ and two stability balls $b_B,b_B'\in\mathcal{F}_{k+v}$ centered in $M$ such that $3\lambda b_B\cap 3\lambda b_B'=\emptyset$, $3\lambda b_B\cup3\lambda b_B'\subset 2\lambda B$ and such that the size of 
\[
\{\hat b\in \mathcal{B}_{k+u} \st 0\le u\le v,\ 3\lambda \hat b\cap(3\lambda b_B\cup 3\lambda b_B')\not=\emptyset) \}
\]
is bounded above by $J$.
\end{enumerate}
\end{definition}

{
\begin{definition}[\cite{Song2019}*{Section~2.5.1}]
Let $\mathcal{B}$ be a layered $3\lambda$-disjoint family of stability balls. We say that $\mathbf{B}\subset \mathcal{B}$ is a basis of $\mathcal{B}$ if, for all $k\ge 0$, $\mathbf{B}\cap\mathcal{B}_k$ is a maximal subset of $\mathcal{B}_k$ such that 
\[
3\lambda b\cap 3\lambda b'=\emptyset
\]
for all $b\in\mathbf{B}\cap\mathcal{B}_k$ and $b'\in\bigcup_{j < k}\mathcal{B}_j$.
\end{definition}
}

\begin{proposition}
\label{prop:Property-PJ}
Let us assume to be in the setting described in \cref{sec:choice-param}. Moreover, let us assume that the almost conical region $\conical$ is defined as in \cref{def:conical}. Then, there exists $J=J(M,g,\CA)\in\N$ such that the following statement holds.

Let $\mathcal{B}{\subset\mathcal{B}_\ms}$ be a layered $3\lambda$-disjoint family of stability balls and let $\mathbf{B}$ be a basis of $\mathcal{B}$. Moreover, assume that every ball in $\mathcal{B}\setminus\mathbf{B}$ is centered in $M\setminus\conical$.
Then $(\mathcal{B}, B)$ satisfies property \PJ{} for all $B\in\mathbf{B}$.
\end{proposition}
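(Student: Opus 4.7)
The plan is to follow the strategy of \cite{Song2019}*{Proposition~10}, substituting our free boundary dichotomy \cref{lem:caseA3} for its closed analogue. First, I would fix the constants $m=m(M,g,\CA)$ and $\bar R=\bar R(M,g,\CA)$ provided by \cref{lem:caseA3}, and plan to choose $J=J(M,g,\CA)$ at the end, depending on $m$, $\bar R$, $C_{PL}$, and $C_{NL}(\cdot)$.

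Fix $B\in\mathbf{B}\cap\mathcal{B}_k$. Any ball $\hat b\in\mathcal{B}_{k+u}$ with $u\ge 1$ satisfying $3\lambda\hat b\cap 3\lambda B\ne\emptyset$ cannot lie in $\mathbf{B}$, since $B\in\mathbf{B}$ sits in the earlier layer $k$, which would violate the maximality in the definition of basis. Thus $\hat b\in\mathcal{B}\setminus\mathbf{B}$ and, by hypothesis, is centered in $M\setminus\conical$. In layer $k$ itself, the layered $3\lambda$-disjointness forces the only ball intersecting $\lambda B$ in the $3\lambda$ sense to be $B$ itself. Assume toward a contradiction that condition~\ref{pj:intersection} of \PJ{} fails, so
\[
\mathcal{D}\eqdef \{\hat b\in\mathcal{B}_{k+u}\st u\ge 0,\ 3\lambda\hat b\cap \lambda B\ne\emptyset\}
\]
has $|\mathcal{D}|>J$. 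By \cref{lem:fact2} applied to $B$, the number of elements of $\mathcal{D}$ in layers $k,\ldots,k+\bar u$ is bounded by $C_{NL}(\bar u)$; choosing $J>C_{NL}(\bar u_0)$ for $\bar u_0$ large enough, some element $b_m\in\mathcal{D}$ lies in a layer $k+v_m$ with $v_m\ge \bar u_0$.

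The heart of the proof is the extraction from $\mathcal{D}$ of a subchain $b_1,\ldots,b_m$ satisfying the hypotheses of \cref{lem:caseA3} with this $b_m$. Hypothesis~(i) of \cref{lem:caseA3} follows from the observation above, and (iv) is automatic from $\hat b\in\mathcal{D}$ since $\lambda B\subset 1.1\lambda B$. The substantive tasks are the radius-halving condition~(ii) and the clustering condition $6\lambda b_i\cap 3\lambda b_m\ne\emptyset$ stated in~(iii). I anticipate this extraction to be the main technical obstacle: while every element of $\mathcal{D}$ clusters around $B$, only some of them cluster around the small ball $b_m$. The strategy, mirroring Song's, is a recursive pigeonhole argument across the dyadic layers $k, k+1, \ldots, k+v_m$: by \cref{lem:fact1}, in each such layer at most $C_{PL}$ balls of $\mathcal{B}$ have $6\lambda$-dilation intersecting $3\lambda b_m$, so if the chain cannot be built with this $b_m$ directly, one replaces $b_m$ with a more central element of $\mathcal{D}$ and iterates. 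With $J$ chosen sufficiently large in terms of $m$, $C_{PL}$ and $C_{NL}$, the process must terminate at a chain of length $m$.

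Applying \cref{lem:caseA3} to the resulting chain yields two stability balls $b_B,b_B'$ of equal radius at least $2^{-\bar R}\radius(b_m)$, pairwise $3\lambda$-disjoint, with $3\lambda b_B\cup 3\lambda b_B'\subset 2\lambda B$. If $v=v(B)$ is chosen so that $b_B,b_B'\in\mathcal{F}_{k+v}$, then $v\le v_m+\bar R+1$, and condition~\ref{pj:twoballs} of \PJ{} is verified by a final application of \cref{lem:fact2} with $\bar u=v(B)$: since $3\lambda b_B\cup 3\lambda b_B'\subset 2\lambda B\subset 3\lambda B$, the cardinality of the set in~\ref{pj:twoballs} is bounded by $2\,C_{NL}(v(B))$, a quantity depending only on $(M,g,\CA)$. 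Choosing $J$ larger than this final upper bound yields the desired contradiction with $|\mathcal{D}|>J$ (since condition~\ref{pj:twoballs}, hence property \PJ{}, would have been satisfied), and concludes the proof.
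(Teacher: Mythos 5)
Your high-level plan — reduce to \cref{lem:caseA3} plus the covering estimates \cref{lem:fact1,lem:fact2} — matches the paper's strategy, and your verification that the deep balls lie in $\mathcal{B}\setminus\mathbf{B}$ (hence are centered in $M\setminus\conical$) is correct. But the execution has two genuine gaps.

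First, your final cardinality bound does not hold uniformly. You assert that the set in case~\ref{pj:twoballs} has size at most $2\,C_{NL}(v(B))$, ``a quantity depending only on $(M,g,\CA)$.'' This is false: $v(B)$ can be arbitrarily large as $B$ ranges over $\mathbf{B}$ (the chain from $B$ down to $b_m$ can be arbitrarily deep), so $C_{NL}(v(B))$ is unbounded and cannot be absorbed into a fixed $J$. Moreover, \cref{lem:fact2} counts balls in layers \emph{below} a given ball, whereas here you need to count balls in layers between $B$ and $b_B$, i.e.\ \emph{above} $b_B$, which is the content of \cref{lem:fact1} and yields a per-layer bound $C_{PL}$ — again not summable over arbitrarily many layers without extra structure. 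The paper obtains a uniform bound of the form $C_{NL}(1000)+m+2\bar R\, C_{PL}$ precisely because the dichotomy below collapses the number of ``occupied'' intermediate layers.

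Second, the ``recursive pigeonhole'' you describe to extract the chain $b_1,\dots,b_m$ does not resolve the difficulty; the paper's argument is a flat case split, not an iteration replacing $b_m$. Concretely, after discarding the first $1000$ layers via \cref{lem:fact2}, one lists the layers $v_1<\dots<v_N$ beyond $k+1000$ containing a ball whose $3\lambda$-dilation meets $1.1\lambda B$, and argues: (a) if some layer $v_i$ with $i\le m$ contains two such balls, their disjointness already gives case~\ref{pj:twoballs}; (b) if some deep ball $b$ satisfies $3\lambda b\subset 2\lambda B$ but $3\lambda b\cap 1.1\lambda B=\emptyset$, one finds the two disjoint balls directly (Song's Case~A.1); (c) if each of $v_1,\dots,v_m$ holds a unique ball $b_i$ and some $b_i$ satisfies $6\lambda b_i\cap 3\lambda b_m=\emptyset$, again case~\ref{pj:twoballs} follows (Case~A.2); and only in the residual case (d) do all hypotheses of \cref{lem:caseA3} hold, and the uniqueness per layer from (a) together with the $\bar R$-layer bound on the depth of $b_B,b_B'$ below $b_m$ gives the uniform count. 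Your sketch omits cases (a), (b), (c) entirely, and it is exactly those cases that make the count in (d) uniform.
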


\begin{case3}
When $n+1=3$, the same result holds with $J=J(M,g)$ not depending on $\CA$. This follows from the fact that the constants in \cref{lem:caseA3} (which is the core of the proof below) do not depend on $\CA$.
\end{case3}

\begin{proof}
The proof is similar to the proof of \cite{Song2019}*{Proposition~10}, we report it (with some changes and rearrangements) for completeness.

Choose $J=C_{NL} + 2(m + \bar R)C_{PL}$, where $C_{PL}$ and $C_{NL}=C_{NL}(1000)$ are given by \cref{lem:fact1,lem:fact2}, $m$ and $\bar R$ are given by \cref{lem:caseA3}. 
First, note that we can assume that there exists $\bar b \in\mathcal{B}_{k+v}$, for some $v>1000$, such that $3\lambda \bar b\cap \lambda B\not=\emptyset$, since otherwise case \ref{pj:intersection} holds, using \cref{lem:fact2} since $J>C_{NL}$.
Therefore, thanks to \cite{Song2019}*{Proof of Proposition~10, Case A.1}, observe that, if there exists $b\in\mathcal{B}_{k+v}$ for some $v>1000$ such that $3\lambda b\cap 1.1\lambda B =\emptyset$ and $3\lambda b\subset 2\lambda B$, then case \ref{pj:twoballs} holds.

As a result, we can assume that all balls $b\in\mathcal{B}_{k+v}$, for some $v>1000$, such that $3\lambda b\subset 2\lambda B$ satisfy $3\lambda b\cap 1.1\lambda B\not=\emptyset$. On the other hand, note that also the converse is true: every ball $b\in\mathcal{B}_{k+v}$ with $v>1000$ such that $3\lambda b\cap 1.1\lambda B \not=\emptyset$ satisfies $3\lambda b\subset 2\lambda B$, since $\radius(b)\le 2^{-1000}\radius(B)$.

Let $1000<v_1<v_2<\ldots<v_N$, for some $N> 0$, be the numbers such that for all $i=1,\ldots,N$ there exists at least one ball $b\in \mathcal{B}_{k+v_i}$ whose $3\lambda$-dilation intersects $ 1.1\lambda B$.
Thanks to what we said before, this is equivalent to the fact that $3\lambda b\subset 2\lambda B$.

Consider $m$ given by \cref{lem:caseA3}.
    Assume that there are two distinct balls $b_1,b_2\in \mathcal{B}_{k+{v_i}}$ with $3\lambda b_1\cap1.1\lambda B\neq \emptyset$ and 3$\lambda b_1\cap1.1\lambda B\neq\emptyset$, for some $i\in\{1,\ldots,m\}$. Then $3\lambda b_1\cap3\lambda b_2=\emptyset$ (because the family $\mathcal{B}$ is layered $3\lambda$-disjoint), $3\lambda b_1\cup3\lambda b_2 \subset 2\lambda B$ and the size of 
    \[
    \{\hat b\in \mathcal{B}_{k+u}\st 0\le u\le v_i,\ 3\lambda\hat b\cap(3\lambda b_1\cup3\lambda b_2) \not=\emptyset\}
    \]
    is bounded above by $C_{NL} + 2mC_{PL} < J$. Therefore, case \ref{pj:twoballs} holds.
    
    Thus, we may assume that for each number $v_i$ there is exactly one ball $b_i\in \mathcal{B}_{k+v_i}$ satisfying $3\lambda b_i\cap1.1\lambda B\neq\emptyset$, or equivalently $3\lambda b_i\subset 2\lambda B$.
    If $N<m$ then case \ref{pj:intersection} holds, using $C_{NL} + m < J$. So let us assume $N\geq m$ too.

If $6\lambda b_i\cap 3\lambda b_m =\emptyset$ for some $i=1,\ldots,m-1$, then case \ref{pj:twoballs} holds as in \cite{Song2019}*{Proof of Proposition~10, Case A.2}. Therefore, we can assume that $6\lambda b_i\cap 3\lambda b_m\not=\emptyset$ for all $i=1,\ldots,m-1$.
As a result, we can apply \cref{lem:caseA3} and obtain stability balls $b_B,b_B'$ such that $3\lambda b_B\cap 3\lambda b_B'=\emptyset$, $3\lambda b_B\cup 3\lambda b_B'\subset 2\lambda B$, and $\radius(b_B) = \radius(b_B') \ge 2^{-\bar R}\radius(b_M)$. Therefore, case \ref{pj:twoballs} of property \PJ{} holds, since the number of balls $\hat b\in\mathcal{B}$ with radius between $\radius(B)$ and $\radius(b_B) = \radius(b_B')$ such that $3\lambda\hat b\cap (3\lambda b_B\cup3\lambda b_B')\not=\emptyset$ is bounded above by $C_{NL} + m + 2\bar RC_{PL} < J$.
\end{proof}

\subsection{Index--topology estimate for the concentration region}

Thanks to the previous results, we are now able to prove the aforementioned estimate relating topology and index of a free boundary minimal hypersurface. The corresponding result in the closed case can be found in \cite{Song2019}*{Corollary~12}.

\begin{theorem} \label{thm:DisjointIndex}
Let us assume to be in the setting described in \cref{sec:choice-param}. Moreover, assume that the almost conical region $\conical$ associated to $\Sigma$ is defined as in \cref{def:conical}.
    Then, there exists $C_{MI}=C_{MI}(M,g,\CA)>0$ such that, for any disjoint family $\mathcal{B}{\subset\mathcal{B}_\ms}$ of stability balls centered in the concentration region $M\setminus\conical$, it holds 
    \begin{equation*}\label{Morse-index-bound}
        \abs{\mathcal{B}} \le C_{MI}(\ind(\ms)+1).
    \end{equation*}

\begin{case3}
In this case, the same result hold with $C_{MI}=C_{MI}(M,g)$ not depending on $\CA$. Indeed, one can see from the proof that $C_{MI}$ depends on the constants $C_{VB},C_{LF}, J$ from \cref{lem:Besicovitch-type-lemma,prop:we-can-layer,prop:Property-PJ} respectively, which are independent of $\CA$ for $n+1=3$.
\end{case3}
\end{theorem}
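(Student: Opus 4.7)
The plan follows Song's approach in \cite{Song2019}*{Corollary~12}: extract from $\mathcal{B}$ a layered $3\lambda$-disjoint sub-family, then iterate property \PJ{} to produce a collection of pairwise $3\lambda$-disjoint stability balls whose instability generates linearly independent negative directions for $Q^\ms$, yielding the desired bound on $|\mathcal{B}|$ in terms of $\ind(\ms)$. First I would apply \cref{prop:we-can-layer} to replace $\mathcal{B}$ by a layered $3\lambda$-disjoint subfamily $\mathcal{B}'\subset\mathcal{B}$ with $|\mathcal{B}|\le C_{LF}|\mathcal{B}'|$, still centered in $M\setminus\conical$ and still contained in $\mathcal{B}_\ms$. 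Taking any basis $\mathbf{B}$ of $\mathcal{B}'$ puts us into the hypotheses of \cref{prop:Property-PJ}. I would then iterate \PJ{} to organize $\mathbf{B}$ as the root-set of a forest of finite binary trees: at a current node $B$, if case~(1) of \PJ{} holds, declare $B$ a leaf (so $B$ accounts for at most $J$ balls of $\mathcal{B}'$ at its scale or smaller); if case~(2) holds, adjoin the two children $b_B,b_B'$ coming from \PJ{} and recurse on them. The scale gap $v(B)>1000$ and the inclusion $3\lambda b_B\cup 3\lambda b_B'\subset 2\lambda B$ built into \PJ{} guarantee that the whole construction stays layered $3\lambda$-disjoint; in particular, the final collection of leaves is pairwise $3\lambda$-disjoint (children of distinct basis parents sit in disjoint $2\lambda$-dilations of $3\lambda$-disjoint basis balls).

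Standard binary-tree arithmetic then gives $|\mathcal{B}'|\le J\cdot\#(\text{all nodes})\le 2J\cdot\#(\text{leaves})$. To count leaves, each leaf $\ell$ is a stability ball centered at some $p_\ell$ with $s_\ms(p_\ell)\le\bar r$. If $s_\ms(p_\ell)<\bar r$, the maximality in \cref{def:stability-radius} forces $\ms$ to be unstable in $B(p_\ell,\lambda s_\ms(p_\ell))$, producing $\xi_\ell\in\Gamma(N\ms)$ compactly supported inside $3\lambda\ell$ with $Q^\ms(\xi_\ell,\xi_\ell)<0$. The $3\lambda$-disjointness of the leaves yields linear independence of the $\xi_\ell$, so their number is at most $\ind(\ms)$; the remaining leaves, with $s_\ms(p_\ell)=\bar r$, are disjoint $\bar r$-balls and are bounded in number by $C_{VB}$ via \cref{lem:Besicovitch-type-lemma}\ref{btl:ii}. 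Combining,
\[
|\mathcal{B}|\le C_{LF}|\mathcal{B}'|\le 2JC_{LF}(\ind(\ms)+C_{VB})\le C_{MI}(1+\ind(\ms))
\]
with $C_{MI}\eqdef 2JC_{LF}(1+C_{VB})$.

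The hard part will be the bookkeeping of the tree construction, namely verifying that the iteratively introduced children remain $3\lambda$-disjoint from every previously placed ball at the same scale across the enlarging family. This rests on the scale gap $v(B)>1000$ in property \PJ{} together with \cref{lem:fact1,lem:fact2} already used to prove \cref{prop:Property-PJ}. For the refinement when $n+1=3$, note that each of $C_{LF}$ (from \cref{prop:we-can-layer}), $C_{VB}$ (from \cref{lem:Besicovitch-type-lemma}) and $J$ (from \cref{prop:Property-PJ} in dimension $n+1=3$) is independent of $\CA$, giving $C_{MI}=C_{MI}(M,g)$ as claimed.
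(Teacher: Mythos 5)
Your proposal follows the paper's route: extract a layered $3\lambda$-disjoint subfamily via \cref{prop:we-can-layer}, apply \cref{prop:Property-PJ} to a basis, run the dichotomy of property \PJ{} recursively, and conclude by pairing leaves with negative directions of $Q^\ms$ and with \cref{lem:Besicovitch-type-lemma}\ref{btl:ii}. The only organizational difference is that the paper first splits off $\mathcal{R}=\{B\in\mathcal{B}:\radius(B)=\bar r\}$ and bounds $\abs{\mathcal{R}}\le C_{VB}$ before layering the remainder $\mathcal{S}$, whereas you layer all of $\mathcal{B}$ and absorb the $\bar r$-balls into the leaf count at the end; both are fine, and your observation that every child produced by case~(2) of \PJ{} sits strictly below scale $\bar r$ (since $v>1000$) is exactly what makes your variant work.

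Two small cautions. First, the recursion ``apply \PJ{} at $b_B$'' is not literally an application of \cref{prop:Property-PJ} to the original pair $(\mathcal{B}',b_B)$, since $b_B\notin\mathcal{B}'$ and in particular $b_B$ is not in a basis of $\mathcal{B}'$; as in Song's Proposition~11 (to which the paper defers), one must re-form the family by inserting $b_B,b_B'$, deleting the $\le J$ balls from case~(2), recomputing a basis, and only then applying \cref{prop:Property-PJ} again. This is precisely the bookkeeping you flag as the hard part, and it is where the claim $\abs{\mathcal{B}'}\le J\cdot\#(\text{nodes})$ gets its justification (balls of $\mathcal{B}'$ not hit at the current node re-enter as later basis elements). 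Second, a minor slip: the definition of $s_\ms$ gives instability of $\ms$ in $B(p_\ell,\lambda' s_\ms(p_\ell))$ for $\lambda'>\lambda$, not in $B(p_\ell,\lambda s_\ms(p_\ell))$ itself; taking $\lambda'$ slightly larger than $\lambda$ still yields a test section supported in $3\lambda\ell$, so the conclusion you draw is correct.
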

\begin{proof}
First, we classify the balls of $\mathcal{B}$ into two types: $ \mathcal{B} = \mathcal{R}\cup\mathcal{S}$, where $\mathcal{R}$ contains those balls with radius $\bar{r}$, and $\mathcal{S}$ those with radius strictly smaller than $\bar{r}$.
By \cref{lem:Besicovitch-type-lemma}\ref{btl:ii}, we have that
    \begin{equation}\label{eq:EstR}
   \abs{\mathcal{R}} \le C_{VB} = C_{VB}(M,g).
    \end{equation}

    As a result, we now just need to bound $\abs{\mathcal{S}}$. In particular we prove that there exists a family $\mathcal{S}'$ of stability balls with radius strictly less than $\bar r$ such that $\abs{\mathcal{S}'}\ge \abs{\mathcal{S}}/(2C_{LF}(J+1))$ and $\operatorname{dist}(\lambda b_1,\lambda b_2)>0$ for all distinct $b_1,b_2\in\mathcal{S}'$. Here $C_{LF} = C_{LF}(M,g)>1$ is given by \cref{prop:we-can-layer} and $J=J(M,g,\CA)$ is given by \cref{prop:Property-PJ}.

    This suffices to conclude the proof of the theorem. Indeed, since $\operatorname{dist}(\lambda b_1,\lambda b_2)>0$ for all $b_1,b_2\in\mathcal{S}'$ and $\lambda' b$ is unstable for all $b\in\mathcal{S}'$ and $\lambda'>\lambda$ (by definition of $s_\ms$), there exist $\abs{\mathcal{S}'}$ disjoint regions where $\ms$ is unstable. Hence, we have
    \[
        \ind(\ms)
            \geq \abs{\mathcal{S}'}
            \geq \frac{1}{2(J+1)C_{LF}}\abs{\mathcal{S}}.
    \]
    Together with \eqref{eq:EstR}, this implies that
    \[
    \abs{\mathcal{B}} = \abs{\mathcal{R}} + \abs{\mathcal{S}} \le C_{VB} + 2(J+1)C_{LF}\ind(\ms).
    \]
    So it suffices to take $C_{MI}= \max\{C_{VB},2(J+1)C_{LF}\}$.

    The proof of the existence of the family $\mathcal{S}'$ is exactly the same as the proof of the closed case \cite{Song2019}*{Proposition~11}, given \cref{prop:Property-PJ} in place of \cite{Song2019}*{Proposition 10}. Therefore, we report here just a sketch and we refer to Song's paper for the full proof.

    First, thanks to \cref{lem:fact1} and \cref{lem:fact2}, we can extract a layered $3\lambda$-disjoint subfamily $\mathcal{S}''\subset \mathcal{S}$ such that
    \[
        \abs{\mathcal{S}}\leq C_{LF}\abs{\mathcal{S}''},    
    \]
    where $C_{LF} = C_{LF}(M,g,\CA)>0$ is given by \cref{prop:we-can-layer}.
    Then, by \cref{prop:Property-PJ}, we know that there exists $J=J(M,g,\CA)>0$ such that $(\mathcal{S}'',B)$ satisfies the property \PJ{} for all $B\in\mathbf{B}$ where $\mathbf{B}$ is a basis for~$\mathcal{S}''$.

    Then, one can use the property \PJ{} as a compensating procedure for a greedy algorithm trying to find the family $\mathcal{S}'$. Namely, the idea is that we can classify the balls in $\mathbf{B}$ in two types depending on whether case \ref{pj:intersection} or case \ref{pj:twoballs} holds for $B\in\mathbf{B}$.
    In the first case, we remove the balls whose $3\lambda$-dilations intersect $\lambda B$ (which are at most $J$). If the second case occurs, then we change $B$ by the two smaller balls $b_B$ and $b_B'$ and we remove the at most $J$ balls in \ref{pj:twoballs}.
    Then we can iterate this again by finding a basis of the new family of balls we produced until this process eventually terminates.
\end{proof}

\section{Proof of the main theorem} \label{sec:proof}

The aim of this section is proving \cref{thm:main}. Hence, let us assume to be in the setting described in \cref{sec:choice-param}. Moreover, assume that the almost conical region $\conical$ is defined as in \cref{def:conical}.

Applying \cref{thm:Besicovitch} to the cover $\{B(p,s_\ms(p)){\in\mathcal{B}_\ms}\st p\in M\setminus\conical\}$ of $M\setminus\conical$, we obtain $C_{BC}$ subfamilies $\mathcal{B}^{(1)},\ldots,\mathcal{B}^{(C_{BC})}$ of disjoint balls that still cover $M\setminus \conical$.
Note that $\mathcal{B}^{(i)}$ is finite for all $i=1,\ldots,C_{BC}$. Indeed, using that $\mathcal{B}^{(i)}$ consists of disjoint balls, we have
\[
\Haus^{n+1}(M\setminus \conical) \ge \sum_{B\in\mathcal{B}^{(i)}} \Haus^{n+1}(B)  \ge \abs{\mathcal{B}^{(i)}} \inf_{p\in M} \Haus^{n+1}(B(p,s_\ms(p))),
\]
where $\inf_{p\in M} \Haus^{n+1}(B(p,s_\ms(p))) > 0$ by \cref{lem:stab-continuous-positive} and $\Haus^{n+1}(M\setminus\conical)\le \Haus^{n+1}(M)<+\infty$ by compactness. Without loss of generality we can assume that $\mathcal{B}^{(1)}$ is the largest of these families.

Now, observe that 
\[
\mathcal{U}_\ms\eqdef \bigcup_{i=1}^{C_{BC}} \bigcup_{B\in\mathcal{B}^{(i)}} \{\text{components of }\ms\cap B \} \cup \bigcup_{T\in\mathcal{T}} \{\text{components of }\ms\cap T\}
\]
is an $\alpha$-almost acyclic cover of $\Sigma$ with overlap at most $C_{BC}+1$, where $\alpha$ is given by \cref{prop:ConicalRegion}.
Indeed, for all $T\in\mathcal{T}$, the connected components of $\Sigma\cap T$ have sum of the Betti numbers bounded above by $\alpha$ by \cref{prop:ConicalRegion}\ref{cr:topology-telescopes}, and any other finite intersection of elements of $\mathcal{U}_\Sigma$ has trivial topology by \cref{cor:acyclic-cover} and \cref{prop:ConicalRegion}\ref{cr:trivial-intersection}.
Moreover, $\mathcal{U}_\ms$ has overlap at most $C_{BC}+1$ because it is the union of the $C_{BC}+1$ families consisting of disjoint subsets of $\Sigma$, namely $\{\text{components of } \ms\cap B\st B\in\mathcal{B}^{(i)}\}$ for $i=1,\ldots, C_{BC}$ and $\{\text{components of }\ms\cap T \st T\in\mathcal{T}\}$.

Analogously, we have that
\[
\mathcal{U}_{\partial\ms}\eqdef \bigcup_{i=1}^{C_{BC}} \bigcup_{B\in\mathcal{B}^{(i)}} \{\text{components of }\partial\ms\cap B \}\cup \bigcup_{T\in\mathcal{T}} \{\text{components of }\partial \ms\cap T\}
\]
is a controlled cover of $\partial\ms$  with overlap at most $C_{BC}+1$ and sum of the Betti numbers of any finite intersection of elements bounded above by $\alpha$.
Hence, by \cref{lem:cover-and-betti}, we get that
\begin{align*}
\sum_{k=0}^n b_k(\ms) +\sum_{k=0}^{n-1} b_k(\partial\ms) &\le C_{AC} (\abs{\mathcal{U}_\ms} + \abs{\mathcal{U}_{\partial\ms}}) .
\end{align*}
Now observe that, by \cref{prop:write-as-graph}\ref{wag:ii} and \cref{prop:sheeting-results}, we have
\[
\abs{\{\text{components of }\partial\ms\cap B \}} \le \abs{\{\text{components of }\ms\cap B \}} \le C_{CS}\CA.
\]
Moreover, $\abs{\{\text{components of } \ms\cap T\}}$ and $\abs{\{\text{components of }\partial \ms\cap T\}}$ are bounded by $C_{CT}\CA$ by \cref{prop:ConicalRegion}\ref{cr:sheeting-telescopes}, for all $T\in \mathcal{T}$. Finally, \cref{prop:ConicalRegion}\ref{cr:controlled-overlap} implies that
\[
\abs{\mathcal{T}} \le 2 \sum_{i=1}^{C_{BC}} \abs{\mathcal{B}^{(i)}}.
\]
Therefore, we get
\begin{align*}
\abs{\mathcal{U}_\ms} + \abs{\mathcal{U}_{\partial\ms}} &\le  2C_{CS}\CA\sum_{i=1}^{C_{BC}} \abs{\mathcal{B}^{(i)}} +   2C_{CT}\CA\abs{\mathcal{T}} \le (2C_{CS} + 4 C_{CT})\CA \sum_{i=1}^{C_{BC}} \abs{\mathcal{B}^{(i)}} \\
&\le (2C_{CS} + 4 C_{CT})\CA C_{BC} \abs{\mathcal{B}^{(1)}} \le (2C_{CS} + 4 C_{CT}) C_{BC} C_{MI} \CA (\ind(\ms)+1),
\end{align*}
where we used \cref{thm:DisjointIndex} in the last inequality.
Therefore, by putting together the estimates, we get
\[
\sum_{k=0}^n b_k(\ms) +\sum_{k=0}^{n-1} b_k(\partial\ms) \le C_\CA (1+\ind(\ms)),
\]
where
\begin{equation}\label{eq:dep-const}
C_\CA\eqdef C_{AC} (2C_{CS} + 4 C_{CT}) C_{BC} C_{MI} \CA.
\end{equation}
Observe that, in dimension $n+1=3$, all the constants $C_{AC}$, $C_{CS}$ $C_{CT}$, $C_{BC}$, $C_{MI}$ depend only on {$(M,g)$}. Therefore, we can write $C_\CA=C\CA$ for some constant $C=C(M,g)>0$ independent of $\CA$ and the proof is concluded.

\bibliography{biblio}

\printaddress
\end{document}